\newtheorem{theorem}{Theorem}[section]
\newtheorem{lemma}[theorem]{Lemma}
\newtheorem{proposition}[theorem]{Proposition}
\newtheorem{remark}[theorem]{Remark}
\newtheorem{definition}[theorem]{Definition}
\newtheorem{example}[theorem]{Example}
\numberwithin{equation}{section}
\newcommand{\cz}{{\mathbb C}}
\newcommand{\gz}{{\mathbb Z}}
\newcommand{\nz}{{\mathbb N}}
\newcommand{\rz}{{\mathbb R}}
\newcommand{\bfS}{\mathbf{S}}
\newcommand{\bfT}{\mathbf{T}}
\newcommand{\bfH}{\mathbf{H}}
\newcommand{\calA}{\mathcal{A}}
\newcommand{\calB}{\mathcal{B}}
\newcommand{\calE}{\mathcal{E}}
\newcommand{\fraka}{\mathfrak{a}}
\newcommand{\frakA}{\mathfrak{A}}
\newcommand{\frakB}{\mathfrak{B}}
\newcommand{\frakC}{\mathfrak{C}}
\newcommand{\frakd}{\mathfrak{d}}
\newcommand{\frakD}{\mathfrak{D}}
\newcommand{\frakg}{\mathfrak{g}}
\newcommand{\frakP}{\mathfrak{P}}
\newcommand{\frakQ}{\mathfrak{Q}}
\newcommand{\frakS}{\mathfrak{S}}
\newcommand{\scrA}{\mathscr{A}}
\newcommand{\scrB}{\mathscr{B}}
\newcommand{\scrC}{\mathscr{C}}
\newcommand{\scrH}{\mathscr{H}}
\newcommand{\scrL}{\mathscr{L}}
\newcommand{\scrP}{\mathscr{P}}
\newcommand{\scrR}{\mathscr{R}}
\newcommand{\scrS}{\mathscr{S}}
\newcommand{\cl}{\mathrm{cl}}
\newcommand{\dbar}{\overline{\partial}}
\newcommand{\dzbar}{d\overline{z}}
\newcommand{\forget}[1]{}
\newcommand{\im}{\text{\rm Im}\,}
\newcommand{\lra}{\longrightarrow}
\newcommand{\pO}{\partial\Omega}
\newcommand{\re}{\mathrm{Re}\,}
\newcommand{\wh}{\widehat}
\newcommand{\wt}{\widetilde}
\begin{document}
%%%%%%%%%%%%%%%%%%%%%%%%%%%%%%%%%%%%%%%%%%%%%%%%%%
%%%%%%%%%%%%%%%%%%%%%%%%%%%%%%%%%%%%%%%%%%%%%%%%%%
\title[Elliptic complexes on manifolds with boundary]%
{Elliptic complexes on\\ manifolds with boundary} 

\author{B.-W. Schulze}
\address{Universit\"at Potsdam, Institut f\"ur Mathematik, Potsdam (Germany)}
\email{schulze@math.uni-potsdam.de}

\author{J.\ Seiler}
\address{Universit\`a di Torino, Dipartimento di Matematica, Torino (Italy)}
\email{joerg.seiler@unito.it}

%\today

\begin{abstract}
We show that elliptic complexes of  $($pseudo$)$differential operators on smooth compact manifolds with boundary 
can always be complemented to a Fredholm problem by boundary conditions involving global pseudodifferential 
projections on the boundary $($similarly as the spectral boundary conditions of Atiyah, Patodi and Singer for 
a single operator$)$. We prove that boundary conditions without projections can be chosen if, and only if, 
the topological Atiyah-Bott obstruction vanishes. These results make use of a Fredholm theory for complexes 
of operators in algebras of generalized  pseudodifferential operators of  Toeplitz type which we also develop in the 
present paper. \end{abstract}

\vspace*{-20mm}

\maketitle

\textbf{Keywords:} 
Elliptic complexes, manifolds with boundary, Atiyah-Bott obstruction, Toeplitz type pseudodifferential operators. 

\textbf{MSC (2010):}  58J10, 47L15 (primary); 35S15, 58J40 (secondary)

{\small
\tableofcontents
}
%%%%%%%%%%%%%%%%%%%%%%%%%%%%%%%%%%%%%%%%%%%%%%%%%%
%%%%%%%%%%%%%%%%%%%%%%%%%%%%%%%%%%%%%%%%%%%%%%%%%%
\section{Introduction}\label{sec:intro}

The present paper is concerned with the Fredholm theory of complexes of differential operators and, more generally, 
of complexes of operators belonging to pseudodifferential operator algebras.  
In particular, we consider complexes of differential operators on 
manifolds with boundary and investigate the question in which way one can complement complexes, 
which are elliptic on the level of homogeneous principal symbols, with boundary conditions to achieve a Fredholm problem. 
A boundary condition means here a homomorphism between the given complex and a complex of pseudodifferential operators 
on the boundary; it is called a Fredholm problem if the associated mapping cone has finite-dimensional cohomology spaces  
(see Sections \ref{sec:02.2} and \ref{sec:03.2} for details). 
As we shall show, boundary conditions can always be found, but the character of the boundary conditions to be 
chosen depends on the presence of a topological obstruction, the so-called Atiyah-Bott obstruction, 
cf.\ Atiyah and Bott \cite{Atiy11}, here formulated for complexes. 
In case this obstruction vanishes, one may take ``standard'' conditions $($to be explained below$)$, otherwise 
one is lead to conditions named generalized Atiyah-Patodi-Singer conditions, since they involve global pseudodifferential 
projections on the boundary, similar as the classical spectral boundary conditions of Atiyah, Patodi and Singer \cite{Atiy8} 
for a single operator. Moreover, given a complex together with such kind of boundary conditions, we show that its Fredholm 
property is characterized by the exactness of two associated families of complexes being made up from the homogeneous 
principal symbols and the so-called homogeneous boundary symbols, respectively.  

Essential tools in our approach are a systematic use of  Boutet de Monvel's calculus $($or ``algebra''$)$ 
for boundary value problems \cite{Bout} $($see also Grubb \cite{Grub}, Rempel and Schulze \cite{ReSc}, and 
Schrohe \cite{Schr}$)$ and a suitable extension of it due to the first author \cite{Schu37}, as well as the concept 
of generalized pseudodifferential operator algebras of Toeplitz type in the spirit of the second author's work \cite{Seil}. 
A key role will play the results obtained in Sections \ref{sec:05} and \ref{sec:06} concerning complexes of such 
Toeplitz type operators. Roughly speaking, in these two sections we show how to construct an elliptic theory for complexes 
of operators belonging to an operator-algebra having a notion of ellipticity, and then how this theory can be lifted to
complexes involving projections from the algebra.  
We want to point out that these results do not only apply 
to complexes of operators on manifolds with boundary, but to complexes of operators belonging to any ``reasonable'' 
pseudodifferential calculus including, for example,  the calculi of the first author for manifolds with cone-, edge- and higher 
singularities \cite{Schu91} and Melrose's $b$-calculus for manifolds with corners \cite{Melr93}. 

Boutet de Monvel's calculus was designed for admitting the construction of parametrices 
$($i.e., inverses modulo ``smoothing'' or ``regularizing'' operators$)$ of Shapiro-Lopatinskij elliptic boundary value problems on a 
manifold $\Omega$ within an optimal pseudodifferential setting. The elements of this algebra are $2\times2$ block-matrix operators acting 
between smooth or Sobolev sections of vector bundles over $\Omega$ and its boundary $\pO$, respectively; see Section \ref{sec:03.1} 
for further details. Boutet de Monvel also used his calculus to prove an analogue of the Atiyah-Singer index theorem in $K$-theoretic terms. 
There arises the question whether any given elliptic differential operator $A$ on $\Omega$ $($i.e., $A$ has an invertible homogeneous  principal symbol$)$ can be complemented by boundary conditions to yield an elliptic boundary value problem belonging to Boutet de Monvel's calculus. The answer is no, in general. In fact, the so-called Atiyah-Bott obstruction must vanish for $A$: Specifying a normal coordinate near the boundary, one can associate with $A$ its boundary symbol $\sigma_\partial(A)$ which is defined on the unit co-sphere bundle $S^*\pO$ of the boundary $\pO$ and takes values in the differential operators on the half-axis $\rz_+$. In case of ellipticity, this is a family of Fredholm operators between suitable Sobolev spaces of the half-axis, hence generates an element of the $K$-group of $S^*\pO$, the so-called index element. We shall denote this index element by $\mathrm{ind}_{S^*\pO}\,\sigma_\partial(A)$. The Atiyah-Bott obstruction asks that the index element belongs to $\pi^*K(\pO)$, the pull-back of the $K$-group of $\pO$ under the canonical projection $\pi:S^*\pO\to\pO$. 

A simple example of an operator violating the Atiyah-Bott obstruction is the Cauchy-Riemann operator $\dbar$ on the unit-disc $\Omega$ in $\rz^2$, see Section \ref{sec:03.1.5} for more details. However, in this case we may substitute the Dirichlet condition $u\mapsto\gamma_0u$ by $u\mapsto C\gamma_0u$, where $C$ is the associated Calder\'{o}n projector, which is a zero order pseudodifferential projection on the boundary. One obtains Fredholm operators $($in fact, invertible operators$)$, say from $H^s(\Omega)$ to $H^{s-1}(\Omega)\oplus H^{s-1/2}(\pO;C)$, where $H^s(\pO;C)$ denotes the range space of $C$. In \cite{Seel}, Seeley has shown that this works for every elliptic differential operator on a smooth manifold. Schulze, Sternin and Shatalov in \cite{Schu24} considered boundary value problems for elliptic differential operators $A$ with boundary conditions of the form $u\mapsto PB\gamma u$, where $\gamma$ is the operator mapping $u$ to the vector of its first $\mu-1$ derivatives $\partial^j_\nu u|_{\pO}$ in normal direction, $B$ and $P$ are pseudodifferential operators on the boundary and $P$ is a zero-order projection. They showed that the Fredholm property of the resulting operator, where $PB\gamma$ is considered as a map into the image of $P$ rather than into the full function spaces over the boundary, can be characterized by the invertibility of suitably associated principal symbols. Based on these results, the first author of the present work has constructed in \cite{Schu37} a pseudodifferential calculus containing such  boundary value problems, extending Boutet de Monvel's calculus. This calculus permits to construct parametrices of elliptic elements, where the notion of ellipticity is now defined in a new way, taking into account the presence of the projections; see Section \ref{sec:03.1.2} for details.  In \cite{SS04} the authors realized this concept for boundary value problems without the transmission property and in \cite{SS06} they consider operators on manifolds with edges. 

While \cite{Schu37}, \cite{SS04} and \cite{SS06} exclusively dealt with the question of how to incorporate global projection conditions in a specific pseudodifferential calculus $($Boutet de Monvel's calculus and Schulze's algebra of edge pseudodifferential operators, respectively$)$, the second author in \cite{Seil} considered this question from a more general point of view: Given a calculus of  ``generalized''  pseudodifferential operators $($see Section \ref{sec:04.1} for details$)$ with a notion of ellipticity and being closed under construction of parametrices, how can one build up a wider calculus containing all \emph{Toeplitz type} operators $P_1AP_0$, where $A$, $P_0$, $P_1$ belong to the original calculus and the $P_j=P_j^2$ are projections? It turns out that if the original calculus has some natural key properties, then the notion of ellipticity and the parametrix construction extend in a canonical way to the class of Toeplitz type operators; see Section \ref{sec:04.2} for details. 

In the present paper we are not concerned with single operators but with complexes of operators. There is no need to emphasize the importance of  operator complexes in mathematics and that they have been studied intensely in the past, both in concrete $($pseudo-$)$differential and more abstract settings; let us only mention the works of Ambrozie and Vasilescu \cite{AmVa},  Atiyah and Bott \cite{Atiy4}, Br\"uning and Lesch \cite{BrLe}, Rempel and Schulze \cite{ReSc} and Segal \cite{Sega1}, \cite{Sega2}. The Fredholm property of a single operator is now replaced by the Fredholm property of the complex, i.e., the property of having finite-dimensional cohomology spaces. In Section \ref{sec:02} we shortly summarize some basic facts on complexes of operators in Hilbert spaces and use the occasion to correct an erroneous statement present in the literature concerning the Fredholm property of mapping cones, cf.\ Proposition \ref{prop:mapping_cone} and the example given before.  

A complex of differential operators on a manifold with boundary which is exact $($respectively, acyclic$)$ on the level of homogeneous principal symbols, in general will not have the Fredholm property. Again it is natural to ask whether it is possible to complement the complex with boundary conditions to a Fredholm problem within the framework of Boutet de Monvel's calculus. Already Dynin, in his two-page note \cite{Dyni}, observed the presence of a kind of Atiyah-Bott obstruction which singles out those complexes that can be complemented with trace operators from Boutet de Monvel's calculus. Unfortunately, \cite{Dyni} does not contain any proofs and main results claimed there could not be reproduced later on. One contribution of our paper is to construct complementing boundary conditions  in case of vanishing Atiyah-Bott obstruction, though of a different form as those announced in \cite{Dyni}. Moreover we show that, in case of violated Atiyah-Bott obstruction, we can complement the complex with generalized Atiyah-Patodi-Singer conditions to a Fredholm complex, see Section \ref{sec:03.2}. Given a complex with boundary conditions, we characterize its Fredholm property on principal symbolic level. 

As is well-known, for the classical deRham complex on a bounded manifold the Atiyah-Bott obstruction vanishes; in fact, the complex itself -- without any additional boundary condition -- is a Fredholm complex. On the other hand, the Dolbeault or Cauchy-Riemann complex on a complex manifold with boundary violates  the Atiyah-Bott obstruction; we shall show this in Section \ref{sec:03.3.5} in the simple case of the two-dimensional unit ball, where calculations are very explicit. Still, by our result, the Dolbeault complex can be complemented by generalized Atiyah-Patodi-Singer conditions to a Fredholm problem. 

%%%%%%%%%%%%%%%%%%%%%%%%%%%%%%%%%%%%%%%%%%%%%%%%%%
%%%%%%%%%%%%%%%%%%%%%%%%%%%%%%%%%%%%%%%%%%%%%%%%%%
\section{Complexes in Hilbert spaces}\label{sec:02}

In this section we shall provide some basic material about complexes of bounded operators and shall introduce 
some notation that will be used throughout this paper.  

%%%%%%%%%%%%%%%%%%%%%%%%%%%%%%%%%%%%%%%%%%%%%%%%%%
\subsection{Fredholm complexes and parametrices}\label{sec:02.1}

A \emph{Hilbert space complex} consists of a family of Hilbert spaces $H_j$, $j\in\gz$, together with a family 
of operators $A_j\in\scrL(H_j,H_{j+1})$ satisfying $A_{j+1}A_j=0$ for any $j$ $($or, equivalently, 
$\mathrm{im}\,A_j\subseteq\mathrm{ker}\,A_{j+1}$ for any $j)$. More intuitively, we shall represent a complex 
as a diagram  
 $$\frakA:\ldots\lra H_{-1}\xrightarrow{A_{-1}}
     H_0\xrightarrow{A_0}H_1\xrightarrow{A_1}H_2
     \xrightarrow{A_2}H_3\lra\ldots$$
Mainly we shall be interested in \emph{finite} complexes, i.e., the situation where $H_j=\{0\}$ for $j<0$ and 
$j>n+1$ for some natural number $n$. In this case we write 
 $$\frakA:0\lra H_0\xrightarrow{A_0}H_1\xrightarrow{A_1}\ldots\xrightarrow{A_{n-1}}
     H_n\xrightarrow{A_n}H_{n+1}\lra 0.$$

\begin{definition}
The cohomology spaces of the complex $\frakA$ are denoted by 
 $$\scrH_j(\frakA)=\mathrm{ker}\,A_j \big/ \mathrm{im}\,A_{j-1},\qquad j\in\gz.$$
\end{definition}

In case $\scrH_j(\frakA)$ is finite dimensional, the operator $A_{j-1}$ has closed range. We call $\frakA$ a 
\emph{Fredholm complex} if all cohomology spaces are of finite dimension. In case $\frakA$ is also finite, 
we then define the index of $\frakA$ as 
 $$\mathrm{ind}\,\frakA=\sum_{j}(-1)^j\mathrm{dim}\,\scrH_j(\frakA).$$
The complex $\frakA$ is called \emph{exact in position $j$}, if the $j$-th cohomology space is trivial; it 
is called \emph{exact} $($or also acyclic$)$ if it is exact in every position $j\in\gz$. 

\begin{definition}
The $j$-th Laplacian associated with $\frakA$ is the operator 
 $$\Delta_j:=A_{j-1}A_{j-1}^*+A_j^*A_j\in\scrL(H_j).$$
\end{definition}

In case $\mathrm{dim}\,\scrH_j(\frakA)<+\infty$, the orthogonal decomposition 
 $$\mathrm{ker}\,A_j=\mathrm{im}\,A_{j-1}\oplus \mathrm{ker}\,\Delta_j$$ 
is valid; in particular, we can write 
 $$H_j=(\mathrm{ker}\,A_j)^\perp\oplus \mathrm{im}\,A_{j-1}\oplus \mathrm{ker}\,\Delta_j,$$ 
and $\frakA$ is exact in position $j$ if, and only if, $\Delta_j$ is an isomorphism. 

\begin{definition}\label{def:paramterix}
A parametrix of $\frakA$ is a sequence of operators $P_j\in\scrL(H_{j+1},H_j)$, $j\in\gz$,  
such that the following operators are compact$:$
 $$A_{j-1}P_{j-1}+P_jA_j-1\in\scrL(H_j),\qquad j\in\gz.$$
%are compact operators. 
\end{definition}

Note that in the definition of the parametrix we do {not} require that $P_jP_{j+1}=0$ for every $j$; in case 
this property is valid, we also call $\frakP$ a complex and represent it schematically as 
 $$\frakP:\ldots\longleftarrow H_{-1}\xleftarrow{P_{-1}}
     H_0\xleftarrow{P_0}H_1\xleftarrow{P_1}H_2
     \xleftarrow{P_2}H_3\longleftarrow\ldots$$

\begin{theorem}\label{thm:complex-basics}
For $\frakA$ the following properties are equivalent$:$
\begin{itemize}
 \item[a$)$] $\frakA$ is a Fredholm complex. 
 \item[b$)$] $\frakA$ has a parametrix. 
 \item[c$)$] $\frakA$ has a parametrix which is a complex. 
 \item[d$)$] All Laplacians $\Delta_j$, $j=0,1,2,\ldots$, are Fredholm operators in $H_j$. 
\end{itemize}
\end{theorem}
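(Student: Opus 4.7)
The plan is to establish (a) $\Leftrightarrow$ (d) via Hodge theory and to close the cycle (a) $\Rightarrow$ (c) $\Rightarrow$ (b) $\Rightarrow$ (a); the implication (c) $\Rightarrow$ (b) is immediate. The starting point is the elementary identity $\skp{\Delta_j u}{u}=\|A_{j-1}^*u\|^2+\|A_ju\|^2$, which identifies $\mathrm{ker}\,\Delta_j=(\mathrm{im}\,A_{j-1})^\perp\cap\mathrm{ker}\,A_j$, so that $\mathrm{ker}\,\Delta_j$ consists precisely of the harmonic representatives of $\scrH_j(\frakA)$ whenever $\mathrm{im}\,A_{j-1}$ is closed. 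For (a) $\Rightarrow$ (d), finite-dimensionality of each $\scrH_j(\frakA)$ forces every $A_j$ to have closed range, so the orthogonal decomposition $H_j=\mathrm{im}\,A_{j-1}\oplus\mathrm{ker}\,\Delta_j\oplus(\mathrm{ker}\,A_j)^\perp$ quoted in the text applies; $\Delta_j$ preserves this splitting and acts as $A_{j-1}A_{j-1}^*$, $0$, and $A_j^*A_j$ on the three summands, each nontrivial block being an isomorphism, so $\Delta_j$ is Fredholm.

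For (d) $\Rightarrow$ (a), I would start from the orthogonality $\mathrm{im}\,A_{j-1}\perp\mathrm{im}\,A_j^*$ (an immediate consequence of $A_jA_{j-1}=0$) together with the chain $\mathrm{im}\,\Delta_j\subset\mathrm{im}\,A_{j-1}+\mathrm{im}\,A_j^*\subset(\mathrm{ker}\,\Delta_j)^\perp=\overline{\mathrm{im}\,\Delta_j}$, where the final equality uses the self-adjointness of $\Delta_j$ and the hypothesis that its range is closed. Collapsing this chain forces $\mathrm{im}\,A_{j-1}+\mathrm{im}\,A_j^*$ to be closed, and a short argument shows that two orthogonal subspaces whose sum is closed are each closed separately. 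Hence $\mathrm{im}\,A_{j-1}$ is closed, and $\scrH_j(\frakA)\cong\mathrm{ker}\,\Delta_j$ is finite-dimensional.

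For (a) $\Rightarrow$ (c) I would introduce the Green's operator $G_j\in\scrL(H_j)$, namely the inverse of $\Delta_j$ on $(\mathrm{ker}\,\Delta_j)^\perp$ extended by zero on $\mathrm{ker}\,\Delta_j$, so that $\Delta_jG_j=G_j\Delta_j=I-\Pi_j$ with $\Pi_j$ the finite-rank orthogonal projection onto $\mathrm{ker}\,\Delta_j$. Direct computation gives $A_j\Delta_j=A_jA_j^*A_j=\Delta_{j+1}A_j$ (using $A_{j+1}A_j=0$ and $A_jA_{j-1}=0$) together with $A_j\Pi_j=0=\Pi_{j+1}A_j$ (the latter because $\mathrm{im}\,A_j\perp\mathrm{ker}\,A_j^*\supset\mathrm{ker}\,\Delta_{j+1}$); from this one deduces the crucial commutation $A_jG_j=G_{j+1}A_j$ and, by adjunction, $A_j^*G_{j+1}=G_jA_j^*$. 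Setting $P_j:=A_j^*G_{j+1}=G_jA_j^*:H_{j+1}\to H_j$, these commutations combined with $A_j^*A_{j+1}^*=0$ force $P_jP_{j+1}=0$, and a direct check yields $A_{j-1}P_{j-1}+P_jA_j=\Delta_jG_j=I-\Pi_j$, so $\frakP$ is a parametrix which is itself a complex.

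For (b) $\Rightarrow$ (a), I would restrict the parametrix identity $A_{j-1}P_{j-1}+P_jA_j=I+K_j$ with $K_j$ compact to $u\in\mathrm{ker}\,A_j$: this yields $(I+K_j)u=A_{j-1}P_{j-1}u\in\mathrm{im}\,A_{j-1}$, and applying $A_j$ shows that $K_j$ maps the closed subspace $\mathrm{ker}\,A_j$ into itself. Therefore $I+K_j|_{\mathrm{ker}\,A_j}$ is Fredholm on $\mathrm{ker}\,A_j$, its range has finite codimension there, and since this range is contained in $\mathrm{im}\,A_{j-1}$ the quotient $\scrH_j(\frakA)$ is finite-dimensional. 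The most delicate point in the whole argument is the closed-range step in (d) $\Rightarrow$ (a), where the orthogonality of $\mathrm{im}\,A_{j-1}$ and $\mathrm{im}\,A_j^*$ does the heavy lifting; the remainder reduces to careful bookkeeping of the Hodge decomposition and the commutation between Laplacians, differentials, and Green's operators.
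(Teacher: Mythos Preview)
The paper states this theorem without proof, treating it as standard background (there is no proof environment following the theorem; the text moves directly to \S\ref{sec:02.2}). So there is nothing to compare against, and the question reduces to whether your argument is correct. It is: the Hodge-theoretic proof you give for (a) $\Leftrightarrow$ (d), the Green's-operator construction for (a) $\Rightarrow$ (c), and the Fredholm-alternative argument for (b) $\Rightarrow$ (a) are all sound and constitute the standard proof of this result (see, e.g., Segal \cite{Sega2} or Br\"uning--Lesch \cite{BrLe}).

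A couple of remarks on presentation. In (d) $\Rightarrow$ (a) your ``short argument'' that two mutually orthogonal subspaces with closed sum are each closed deserves one more line: if $u_n\in U$ converges in $H$, the limit lies in the closed set $U+V$, say $w=u+v$; orthogonality gives $\|u_n-w\|^2=\|u_n-u\|^2+\|v\|^2\to 0$, forcing $v=0$ and $w=u\in U$. In (b) $\Rightarrow$ (a) it is worth making explicit that the restriction of a compact operator to a closed invariant subspace remains compact, which is what licenses applying the Fredholm alternative on $\mathrm{ker}\,A_j$. Finally, the commutation $A_jG_j=G_{j+1}A_j$ in (a) $\Rightarrow$ (c) is best derived by sandwiching: $A_jG_j=(I-\Pi_{j+1})A_jG_j=G_{j+1}\Delta_{j+1}A_jG_j=G_{j+1}A_j\Delta_jG_j=G_{j+1}A_j(I-\Pi_j)=G_{j+1}A_j$, using the two vanishing relations $\Pi_{j+1}A_j=0=A_j\Pi_j$ you already noted.
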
 

%%%%%%%%%%%%%%%%%%%%%%%%%%%%%%%%%%%%%%%%%%%%%%%%%%
\subsection{Morphisms and mapping cones}\label{sec:02.2}

Given two complexes $\frakA$ and $\frakQ$, a morphism $\bfT: \frakA\to\frakQ$ is a sequence of operators 
$T_j\in\scrL(H_j,L_j)$, $j\in\gz$, such that the following diagram is commutative$:$ 
$$
\begin{CD}
\ldots @>>> H_{-1} @>A_{-1}>> H_0 @>A_{0}>> H_1 @>A_{1}>> H_2  @>>> \ldots \\
@.  @VV{T_{-1}}V  @VV{T_{0}}V @VV{T_{1}}V @VV{T_{2}}V  \\ 
\ldots @>>> L_{-1} @>Q_{-1}>> L_0 @>Q_{0}>> L_1 @>Q_{1}>> L_2  @>>> \ldots 
\end{CD}
$$
i.e., $T_{j+1}A_j=Q_jT_j$ for every $j$. Note that these identities imply that $A_j(\mathrm{ker}\,T_j)\subseteq
\mathrm{ker}\,T_{j+1}$ and $Q_j(\mathrm{im}\,T_j)\subseteq\mathrm{im}\,T_{j+1}$ for every $j$. 

\begin{definition}\label{def:mapping-cone}
The \emph{mapping cone} associated with  $\bfT$ is the 
complex
 $$\frakC_{\bfT}:\;\ldots\lra 
     \begin{matrix}H_{-1}\\ \oplus\\ L_{-2}\end{matrix}
     \xrightarrow{\begin{pmatrix}-A_{-1} & 0\\ T_{-1} & Q_{-2}\end{pmatrix}}
     \begin{matrix}H_{0}\\ \oplus\\ L_{-1}\end{matrix}
     \xrightarrow{\begin{pmatrix}-A_{0} & 0\\ T_{0} & Q_{-1}\end{pmatrix}}
     \begin{matrix}H_{1}\\ \oplus\\ L_{0}\end{matrix}
%     \xrightarrow{\begin{pmatrix}-A_{1} & 0\\ T_{1} & Q_{0}\end{pmatrix}}
     \lra\ldots$$
$\bfT$ is called a \emph{Fredholm morphism} if its mapping cone is a Fredholm complex. 
\end{definition}

We can associate with $\bfT$ two other complexes, namely 
 $$\mathrm{ker}\,\bfT:\ldots\lra \mathrm{ker}\,T_{-1}\xrightarrow{A_{-1}}
     \mathrm{ker}\,T_{0}\xrightarrow{A_0}
     \mathrm{ker}\,T_{1}\xrightarrow{A_1}
     \mathrm{ker}\,T_{2}\lra\ldots$$
and 
 $$\mathrm{coker}\,\bfT:\ldots\lra L_{-1}/\mathrm{im}\,T_{-1}\xrightarrow{Q_{-1}}
     L_{0}/\mathrm{im}\,T_{0}\xrightarrow{Q_0}
     L_{1}/\mathrm{im}\,T_{1}\lra\ldots,$$
where, for convenience of notation, we use again $Q_j$ to denote the induced operator on the quotient space. 

We want to use the occasion to correct an erroneous statement  present in the literature, stating that 
the Fredholm property of the mapping cone is equivalent to the Fredholm property of both kernel an cokernel 
of the morphism. In fact, this is not true, in general, as can be seen by this simple example: 
Let $H$ and $L$ be Hilbert spaces and take $\bfT$ as  
$$
\begin{CD}
0 @>>> 0 @>0>> H @>-1>> H @>>> 0 \\
@.  @VV0V  @VV{T_{1}}V @VV0V @. \\ 
0 @>>> L @>1>> L @>0>> 0 @>>> 0, 
\end{CD}
$$
where $1$ denotes the identity maps on $H$ and $L$, respectively. 
The mapping cone associated with this morphism is 
$$
\begin{CD}
0 @>>> 0 @>0>> \begin{matrix}H\\ \oplus\\ L \end{matrix} 
@>\mbox{$\begin{pmatrix}1&0\\T_1&1 \end{pmatrix}$}>> 
\begin{matrix}H\\ \oplus\\ L \end{matrix} @>>> 0 @>>> 0. 
\end{CD}
$$
Obviously, this complex is exact for every choice of $T_1\in\scrL(H,L)$, since 
the block-matrix is always invertible $($we see here also that the Fredholmness, 
respectively exactness, of a mapping cone does not imply the closedness of the 
images $\mathrm{im}\,T_j)$. 
The kernel complex $\mathrm{ker}\,\bfT$ is  
$$
\begin{CD}
0 @>>> 0 @>0>> \mathrm{ker}\,T_1 @>-1>> H @>>> 0. 
\end{CD}
$$
it is exact only if $T_1=0$, it is Fredholm only when $\mathrm{ker}\,T_1$ has finite 
codimension in $H$, i.e., if $\mathrm{im}\,T_1$ is finite-dimensional. 
 If the range of $T_1$ is closed, then $\mathrm{coker}\,\bfT$ is the complex 
$$
\begin{CD}
0 @>>> L @>\pi>> L/\mathrm{im}\,T_1 @>0>> 0 @>>> 0 
\end{CD}
$$
where $\pi$ is the canonical quotient map. Thus $\mathrm{coker}\,\bfT$ is exact only for $T_1=0$; 
it is Fredholm only when $\mathrm{im}\,T_1$ has finite dimension.

Hence, for the equivalence of the Fredholm properties, additional assumptions are required. 
The assumptions employed in the following proposition are optimal, 
as shown again by the above $($counter-$)$example. 

\begin{proposition}\label{prop:mapping_cone}
Assume that, for every $j$, $\mathrm{im}\,T_j$ is closed and that 
\begin{equation}\label{eq:assumption} 
 \mathrm{dim}\,\frac{Q_j^{-1}(\mathrm{im}\,T_{j+1})}{\mathrm{ker}\,Q_j+\mathrm{im}\,T_j}<+\infty. 
\end{equation} 
Then the following properties are equivalent: 
\begin{itemize}
\item[a$)$] The mapping cone $\frakC_\bfT$ associated with $\bfT$ is Fredholm. 
\item[b$)$] Both complexes $\mathrm{ker}\,\bfT$ and $\mathrm{coker}\,\bfT$ are Fredholm. 
\end{itemize}
In case the quotient space in \eqref{eq:assumption} is trivial, the cohomology spaces satisfy 
\begin{align}\label{eq:cohom}
 \scrH^j(\frakC_\bfT) &\cong \scrH^j(\mathrm{ker}\,\bfT)\oplus \scrH^{j-1}(\mathrm{coker}\,\bfT). 
\end{align}
In particular, if the involved complexes are Fredholm and finite,  
 $$\mathrm{ind}\,\frakC_\bfT=\mathrm{ind}\,\mathrm{ker}\,\bfT-\mathrm{ind}\,\mathrm{coker}\,\bfT.$$
Moreover, $\frakC_\bfT$ is exact if, and only if, both $\mathrm{ker}\,\bfT$ and $\mathrm{coker}\,\bfT$
 are exact. 
\end{proposition}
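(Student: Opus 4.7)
The plan is to reduce the proposition to the long exact sequence attached to the standard short exact sequence of complexes
$$0\lra \frakQ[-1]\stackrel{\iota}{\lra}\frakC_\bfT\stackrel{\pi}{\lra}\frakA\lra 0,$$
where $\iota(y)=(0,y)$ and $\pi(x,y)=x$ (with the usual sign adjustment on $\frakA$ that makes $\pi$ a chain map). A direct snake-lemma calculation identifies the connecting homomorphism with the map $T^*_j$ induced by $\bfT$ on cohomology, so for every $j$ one extracts the short exact sequence
\begin{equation}\label{eq:cone-les}
0\lra \mathrm{coker}\,T^*_{j-1}\lra \scrH^j(\frakC_\bfT)\lra \mathrm{ker}\,T^*_j\lra 0.
\end{equation}

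The next step is to compare $\mathrm{ker}\,T^*_j$ and $\mathrm{coker}\,T^*_j$ with $\scrH^j(\mathrm{ker}\,\bfT)$ and $\scrH^j(\mathrm{coker}\,\bfT)$. For this I would factor $T^*_j=\sigma_j\circ\tau_j$ through $\scrH^j(\mathrm{im}\,\bfT)$ by means of the two short exact sequences of Hilbert complexes
$$0\lra\mathrm{ker}\,\bfT\lra\frakA\xrightarrow{\bfT}\mathrm{im}\,\bfT\lra 0,\qquad 0\lra\mathrm{im}\,\bfT\lra\frakQ\lra\mathrm{coker}\,\bfT\lra 0,$$
which are well defined precisely because every $\mathrm{im}\,T_j$ is closed. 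The natural maps $\alpha_j:\scrH^j(\mathrm{ker}\,\bfT)\to\mathrm{ker}\,T^*_j$ (induced by the inclusion $\mathrm{ker}\,T_j\hookrightarrow H_j$) and $\beta_j:\mathrm{coker}\,T^*_j\to\scrH^j(\mathrm{coker}\,\bfT)$ (induced by the quotient $L_j\twoheadrightarrow L_j/\mathrm{im}\,T_j$) are then analysed by chasing the two associated long exact sequences. A direct computation yields
$$\mathrm{coker}\,\beta_j\;\cong\;\frac{Q_j^{-1}(\mathrm{im}\,T_{j+1})}{\mathrm{ker}\,Q_j+\mathrm{im}\,T_j},$$
exactly the quotient of \eqref{eq:assumption}, and analogous chases exhibit $\mathrm{ker}\,\beta_j$, $\mathrm{ker}\,\alpha_j$ and $\mathrm{coker}\,\alpha_j$ as subquotients of the assumption quotients at the neighbouring indices $j-1,j$; this fits the counter-example above, in which at $j=0$ the displayed quotient is $\mathrm{im}\,T_1$, precisely the obstruction detected there.

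With those kernels and cokernels under control, the equivalence a)$\Leftrightarrow$b) drops out of \eqref{eq:cone-les}: $\scrH^j(\frakC_\bfT)$ is finite-dimensional for every $j$ iff $\mathrm{ker}\,T^*_j$ and $\mathrm{coker}\,T^*_{j-1}$ are, and by the previous step this is equivalent to finite-dimensionality of $\scrH^j(\mathrm{ker}\,\bfT)$ and $\scrH^{j-1}(\mathrm{coker}\,\bfT)$, the assumption quotients being finite-dimensional by hypothesis. When the assumption quotient is trivial for every $j$, $\alpha_j$ and $\beta_j$ become isomorphisms and \eqref{eq:cone-les} takes the form
$$0\lra\scrH^{j-1}(\mathrm{coker}\,\bfT)\lra\scrH^j(\frakC_\bfT)\lra\scrH^j(\mathrm{ker}\,\bfT)\lra 0,$$
which splits in the category of abstract vector spaces, giving \eqref{eq:cohom}. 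The index identity follows by taking alternating sums of dimensions, and the exactness statement follows since a complex is exact iff all its cohomology spaces vanish.

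The main obstacle is the bookkeeping in the second paragraph: one has to verify that \emph{both} the kernel and the cokernel of each of $\alpha_j,\beta_j$ are subquotients of the \emph{single} expression appearing in \eqref{eq:assumption} (for $k\in\{j-1,j\}$), rather than of a larger zoo of algebraic obstructions. A clean execution proceeds by an explicit diagram chase through the two auxiliary long exact sequences, reading off each ``error term'' as a subquotient of $Q_k^{-1}(\mathrm{im}\,T_{k+1})/(\mathrm{ker}\,Q_k+\mathrm{im}\,T_k)$. Throughout, the closed-range hypothesis is essential, so that $\mathrm{im}\,\bfT$ is a genuine Hilbert subcomplex of $\frakQ$ and both auxiliary long exact sequences are at our disposal.
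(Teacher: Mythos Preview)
Your strategy via the long exact sequence of the cone and the two auxiliary short exact sequences through $\mathrm{im}\,\bfT$ is genuinely different from the paper's proof, which instead constructs an explicit subcomplex $\frakC_\bfS\subset\frakC_\bfT$ (built from $\mathrm{ker}\,\bfT$ and a complement $V_j$ of $\mathrm{im}\,T_j$ in $L_j$) whose quotient is the mapping cone of an isomorphism, hence exact. The paper's route gives \eqref{eq:cohom} in one stroke; yours has to piece together several error terms.

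The Fredholm equivalence can indeed be pushed through along your lines, but your description of the bookkeeping is not accurate. Only $\mathrm{coker}\,\alpha_j$ and $\mathrm{coker}\,\beta_j$ are subquotients of the assumption quotient: $\mathrm{coker}\,\alpha_j\subset\mathrm{ker}\,\sigma_j\cong$ (quotient at $j{-}1$) and $\mathrm{coker}\,\beta_j\cong$ (quotient at $j$). By contrast, $\mathrm{ker}\,\alpha_j\cong\mathrm{coker}\,\tau_{j-1}$ and $\mathrm{ker}\,\beta_j$ is a quotient of $\mathrm{coker}\,\tau_j$, where
\[
\mathrm{coker}\,\tau_{j}\;\cong\;\frac{A_{j}^{-1}(\mathrm{ker}\,T_{j+1})}{\mathrm{ker}\,A_{j}+\mathrm{ker}\,T_{j}},
\]
a ``dual'' obstruction that is \emph{not} controlled by the hypothesis \eqref{eq:assumption}. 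The equivalence still goes through because $\mathrm{coker}\,\tau_{j}\cong\mathrm{im}\,\delta_j\subset\scrH^{j+1}(\mathrm{ker}\,\bfT)$ (finite under b)) and also sits in a finite extension by $\mathrm{ker}\,\beta_j\subset\mathrm{coker}\,T^*_j$ (finite under a)), so in each direction the relevant hypothesis bounds it. But this is a bootstrap, not a subquotient of \eqref{eq:assumption}.

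The claim that $\alpha_j,\beta_j$ become isomorphisms when the assumption quotients vanish is \emph{false}, and with it your derivation of \eqref{eq:cohom} collapses. Take $H_0=\cz$, $H_1=\cz^2$, $A_0(x)=(x,x)$, $L_0=L_1=\cz$, $Q_0=0$, $T_0=\mathrm{id}$, $T_1(a,b)=a-b$ (and all other spaces zero). All assumption quotients vanish, yet $\scrH^1(\mathrm{ker}\,\bfT)=\cz$ while $\mathrm{ker}\,T^*_1=0$, so $\alpha_1$ has kernel $\cz$; likewise $\beta_0$ has kernel $\cz$. The formula \eqref{eq:cohom} still holds here (both sides equal $\cz$), because the same term $\mathrm{coker}\,\tau_{j-1}$ appears as $\mathrm{ker}\,\alpha_j$ and as $\mathrm{ker}\,\beta_{j-1}$ and cancels, but extracting a natural isomorphism from this cancellation---valid also in infinite dimensions---requires an extra argument (e.g.\ constructing a direct map $\scrH^j(\mathrm{ker}\,\bfT)\to\scrH^j(\frakC_\bfT)$ and checking it is injective with the right cokernel), which you have not supplied. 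The paper's subcomplex construction sidesteps this issue entirely.
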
 
\begin{proof}
Let us first consider the case where the quotient space in \eqref{eq:assumption} is trivial. 
Then there exist closed subspaces $V_j$ of $L_j$ such that 
$L_j=V_j\oplus\mathrm{im}\,T_j$ and $Q_j:V_j\to V_{j+1}$, for every $j$. 
In fact, choosing a complement $V_j^\prime$ of $\mathrm{im}\,T_j\cap\mathrm{ker}\,Q_j$ in $\mathrm{ker}\,Q_j$ 
for every $j$, take $V_j:=V_j^\prime\oplus Q_j^{-1}(V_{j+1}^\prime)$. It is straightforward to see that 
the complex 
$$
\begin{CD}
\frakQ_V:\quad\ldots @>>> V_{-1} @>Q_{-1}>> V_0 @>Q_{0}>> V_1 @>Q_{1}>> V_2 @>>> \ldots 
\end{CD}
$$
% $$\frakQ_V:\quad \ldots\lra V_{-1}\xrightarrow{Q_{-1}}V_0\xrightarrow{Q_0}V_1\xrightarrow{Q_1}V_2\lra\ldots$$
has the same cohomology groups as $\mathrm{coker}\,\bfT$ from above.
Then consider the morphism $\bfS:\mathrm{ker}\,\bfT\to\frakQ_V$ defined by 
$$
\begin{CD}
\ldots @>>> \mathrm{ker}\,T_{j} @>A_{j}>> \mathrm{ker}\,T_{j+1} @>>> \ldots \\
@.  @VV0V  @VV0V  \\ 
\ldots @>>> V_{j} @>Q_{j}>> V_{j+1}   @>>> \ldots 
\end{CD}
$$
$($note that in the vertical arrows we could also write the $T_j$, since they vanish on their kernel$)$. The mapping cone 
$\frakC_\bfS$ is a subcomplex of $\frakC_\bfT$. The quotient complex $\frakC_\bfT/\frakC_\bfS$ is easily seen to be 
the mapping cone of the morphism 
\begin{align}\label{eq:quotient}
\begin{CD}
\ldots @>>> H_j/\mathrm{ker}\,T_{j} @>A_{j}>> H_{j+1}/\mathrm{ker}\,T_{j+1} @>>> \ldots \\
@.  @VVT_jV  @VVT_{j+1}V  \\ 
\ldots @>>> \mathrm{im}\,T_j @>Q_{j}>> \mathrm{im}\,T_{j+1}   @>>> \ldots 
\end{CD}
\end{align}
again by $A_j$ and $T_j$ we denote here the induced maps on the respective quotient spaces. 
Note that all vertical maps are isomorphisms, hence the associated mapping cone is exact. To see this, note that 
$\begin{pmatrix}-A_j&0\\T_j&Q_{j-1}\end{pmatrix}\begin{pmatrix}u\\v\end{pmatrix}=0$
implies that \mbox{$T_ju+Q_{j-1}v=0$}, i.e., $u=-T_{j}^{-1}Q_{j-1}v$. Thus 
\begin{align*}
 \mathrm{ker}\,\begin{pmatrix}-A_j&0\\T_j&Q_{j-1}\end{pmatrix}
 &\subseteq\mathrm{im}\,\begin{pmatrix}-T_j^{-1}Q_{j-1}\\ 1\end{pmatrix}
     =\mathrm{im}\,\begin{pmatrix}-A_{j-1}T_{j-1}^{-1}\\ 1\end{pmatrix}\\
 &=\mathrm{im}\,\begin{pmatrix}-A_{j-1}\\T_{j-1}\end{pmatrix}
     \subseteq\mathrm{im}\,\begin{pmatrix}-A_{j-1}&0\\T_{j-1}&Q_{j-2}\end{pmatrix}
     \subseteq\mathrm{ker}\,\begin{pmatrix}-A_j&0\\T_j&Q_{j-1}\end{pmatrix},
\end{align*}
showing that $\scrH^j(\frakC_\bfT/\frakC_\bfS)=0$. 
% $$\mathrm{ker}\,\begin{pmatrix}-A_j&0\\T_j&Q_{j-1}\end{pmatrix}
%     =\mathrm{im}\,\begin{pmatrix}-A_{j-1}&0\\T_{j-1}&Q_{j-2}\end{pmatrix}.$$
Summing up, we have found a short exact sequence of complexes, 
\begin{align}\label{eq:exact}
\begin{CD}
0 @>>> \frakC_\bfS @>\alpha>> \frakC_\bfT @>\beta>> \frakC_\bfT/\frakC_\bfS @>>> 0,  
\end{CD}
% 0\lra\frakC_\bfS\xrightarrow{\;\alpha\;} \frakC_\bfT \xrightarrow{\;\beta\;} \frakC_\bfT/\frakC_\bfS\lra0,
\end{align}
where $\alpha$ is the embedding and $\beta$ the quotient map. Since the quotient is an exact complex,  
a standard result of homology-theory $($cf. Corollary 4.5.5 in \cite{Span}, for instance$)$ 
states that the cohomology of $\frakC_\bfS$ and $\frakC_\bfT$ coincide. Since the maps defining 
$\frakC_\bfS$ are just 
 $$\begin{pmatrix}-A_j&0\\0&Q_{j-1}\end{pmatrix}: 
     \begin{matrix}\mathrm{ker}\,T_j\\ \oplus\\ V_{j-1}\end{matrix}
     \lra 
     \begin{matrix}\mathrm{ker}\,T_{j+1} \\ \oplus\\ V_{j}\end{matrix},$$
the claimed relation \eqref{eq:cohom} for the cohomology spaces follows immediately. 
The equivalence of a$)$ and b$)$ is then evident. 

Now let us consider the general case. Choose closed subspaces $U_j$, $V^\prime_j$ and $W_j$ of 
$Q_j^{-1}(\mathrm{im}\,T_{j+1})$ such that
\begin{align*} 
 \mathrm{im}\,T_j&=U_j\oplus(\mathrm{im}\,T_j\cap\mathrm{ker}\,Q_j),\\ 
 \mathrm{ker}\,Q_j&=V_j^\prime\oplus(\mathrm{im}\,T_j\cap\mathrm{ker}\,Q_j),\\
 Q_j^{-1}(\mathrm{im}\,T_{j+1})&=W_j\oplus(\mathrm{im}\,T_j+\mathrm{ker}\,Q_j),
\end{align*}
and define the spaces $V_j:=V_j^\prime\oplus Q_j^{-1}(V_{j+1}^\prime)$. 
Then $L_j=V_j\oplus\mathrm{im}\,T_j\oplus W_j$ and $Q_j:V_j\to V_{j+1}$. As above, consider the complex 
$\frakQ_V$ and the morphism  $\bfS:\mathrm{ker}\,\bfT\to\frakQ_V$; for the cohomology one finds 
$\scrH^j(\mathrm{coker}\,\bfT)=\scrH^{j}(\frakQ_V)\oplus W_j$; note that the $W_j$ are of finite dimension.  
The quotient complex $\frakC_\bfT/\frakC_\bfS$ is the mapping cone of the morphism 
$$
\begin{CD}
\ldots @>>> H_j/\mathrm{ker}\,T_{j} @>A_{j}>> H_{j+1}/\mathrm{ker}\,T_{j+1} @>>> \ldots \\
@.  @VVT_{j}V  @VVT_{j+1}V  \\ 
\ldots @>>> \mathrm{im}\,T_j\oplus W_j @>Q_{j}>> \mathrm{im}\,T_{j+1}\oplus W_{j+1}   @>>> \ldots 
\end{CD}
$$
Since it differs from the exact complex \eqref{eq:quotient} only by the finite-dimensional spaces $W_j$, it is 
a Fredholm complex. By Theorem 4.5.4 of \cite{Span} we now find the exact sequence 
 $$\ldots\lra\scrH^{j-1}(\frakC_\bfT/\frakC_\bfS)\xrightarrow{\;\partial_*\;}
     \scrH^j(\frakC_\bfS)\xrightarrow{\;\alpha_*\;}\scrH^j(\frakC_\bfT)\xrightarrow{\;\beta_*\;}
     \scrH^{j}(\frakC_\bfT/\frakC_\bfS)\xrightarrow{\;\partial_*\;}\ldots$$ 
where $\partial_*$ is the connecting homomorphism for cohomology. Since both spaces 
$\scrH^{j-1}(\frakC_\bfT/\frakC_\bfS)$ and $\scrH^{j}(\frakC_\bfT/\frakC_\bfS)$ are finite-dimensional, 
we find that $\alpha_*$ has finite-dimensional kernel and finite-codimensional range. 
Thus $\scrH^j(\frakC_\bfT)$ is of finite dimension if, and only if, 
$\scrH^j(\frakC_\bfS)$ is. The latter coincides with $\scrH^j(\mathrm{ker}\,\bfT)\oplus\scrH^{j-1}(\frakQ_V)$, which differs 
from $\scrH^j(\mathrm{ker}\,\bfT)\oplus\scrH^{j-1}(\mathrm{coker}\,\bfT)$ only by $W_j$. 
This shows the equivalence of a$)$ and b$)$ in the general case. 
\end{proof}

Of course, condition \eqref{eq:assumption} is void in case all spaces $L_j$ are finite-dimensional. However, 
for the formula of the index established in the proposition, as well as the stated equivalence of exactness, one 
still needs to require that the quotient space in \eqref{eq:assumption} is trivial. 

\begin{remark}\label{rem:isomorphism}
Assume that $\bfT:\frakA\to\frakQ$ is an isomorphism, i.e., all operators $T_j$ are isomorphisms. 
If $\frakP$ is a parametrix to 
$\frakA$, cf.\ Definition $\mathrm{\ref{def:paramterix}}$, then the operators 
 $$S_j:=T_jP_j T_{j+1}^{-1},\qquad j\in\gz,$$
define a parametrix $\frakS$ of the complex $\frakQ$. 
\end{remark}

%%%%%%%%%%%%%%%%%%%%%%%%%%%%%%%%%%%%%%%%%%%%%%%%%%
\subsection{Families of complexes}\label{sec:02.3}

The concept of Hilbert space complexes generalizes to Hilbert \emph{bundle} complexes, i.e. sequences of maps 
\begin{equation*}%\label{eq:bundle-complex}
 \frakA:\ldots\lra E_{-1}\xrightarrow{A_{-1}}
 E_0\xrightarrow{A_0}E_1\xrightarrow{A_1}E_2
 \xrightarrow{A_2}E_3\lra\ldots,
\end{equation*}
where the $E_j$ are finite or infinite dimensional smooth Hilbert bundles and the $A_j$ are bundle morphisms. 
For our purposes it will be sufficient to deal with the case where all involved bundles have identical base spaces, 
say a smooth manifold $X$, and each $A_j$ preserves the fibre over $x$ for any $x\in X$. 
In this case, by restriction to the fibres, we may associate with $\frakA$ a family of complexes
\begin{equation*}%\label{eq:bundle-complex}
 \frakA_x:\ldots\lra E_{-1,x}\xrightarrow{A_{-1}}
 E_{0,x}\xrightarrow{A_{0}}E_{1,x}\xrightarrow{A_{1}}E_{2,x}
 \xrightarrow{A_2}E_{3,x}\lra\ldots,\qquad x\in X. 
\end{equation*}
For this reason we shall occasionally call $\frakA$ a family of complexes. It is called a \emph{Fredholm family} 
if $\frakA_x$ is a Fredholm complex for every $x\in X$. Analogously we define an \emph{exact family}. 

Though formally very similar to Hilbert space complexes, families of complexes are  
more difficult to deal with. This is mainly due to the fact that the cohomology spaces $\scrH_j(\frakA_x)$ may 
change quite irregularly with $x$.

%%%%%%%%%%%%%%%%%%%%%%%%%%%%%%%%%%%%%%%%%%%%%%%%%%
%%%%%%%%%%%%%%%%%%%%%%%%%%%%%%%%%%%%%%%%%%%%%%%%%%
\section{Complexes on manifolds with boundary}\label{sec:03}

We shall now turn to the study of complexes of pseudodifferential operators on manifolds with boundary and 
associated boundary value problems.  

%%%%%%%%%%%%%%%%%%%%%%%%%%%%%%%%%%%%%%%%%%%%%%%%%%
\subsection{Boutet de Monvel's algebra with global projection conditions}\label{sec:03.1}

The natural framework for our analysis of complexes on manifolds with boundary is Boutet de Monvel's 
extended algebra with generalized APS-conditions. In the following we provide a compact account on this 
calculus.  

%%%%%%%%%%%%%%%%%%%%%%%%%%%%%%%%%%%%%%%%%%%%%%%%%%
\subsubsection{Boutet de Monvel's algebra}\label{sec:03.1.1}

First we shall present the standard Boutet de Monvel algebra; for details we refer the reader to the existing 
literature, for example \cite{Grub}, \cite{ReSc},  \cite{Schr}.

Let $\Omega$ be a smooth, compact Riemannian manifold with boundary. We shall work with operators 
 \begin{equation}\label{eq:bdm-mapping01}
  \scrA=
  \begin{pmatrix}
   A_++G & K \\
   T & Q
  \end{pmatrix}\;:\quad
  \begin{matrix}
   \scrC^\infty(\Omega,E_0)\\
   \oplus\\
   \scrC^\infty(\pO,F_0)
  \end{matrix}
  \longrightarrow
  \begin{matrix}
   \scrC^\infty(\Omega,E_1)\\
   \oplus\\
   \scrC^\infty(\pO,F_1)
  \end{matrix}, 
 \end{equation}
where $E_j$ and $F_j$ are Hermitean vector bundles over $\Omega$ and $\pO$, respectively, which are  
allowed to be zero dimensional. Every such operator has an order, denoted by $\mu\in\gz$, and a type, 
denoted by 
$d\in\gz$.\footnote{The concept of negative type can be found in \cite{Grub90}, \cite{Grub}, for example.} 
In more detail, 
\begin{itemize}
 \item $A_+$ is the ``restriction" to the interior of $\Omega$ of a $\mu$-th order, classical 
  \emph{pseudodifferential operator} $A$ defined on the smooth double $2\Omega$,  having the 
  two-sided transmission property with respect to $\pO$, 
 \item $G$ is a \emph{Green operator} of order $\mu$ and type $d$, 
 \item $K$ is a $\mu$-th order \emph{potential operator}, 
 \item $T$ is a \emph{trace operator} of order $\mu$ and type $d$, 
 \item $Q\in L^\mu_\cl(\pO;F_0,F_1)$ is a $\mu$-th order, classical \emph{pseudodifferential operator on the boundary}. 
\end{itemize}
The space of all such operators we shall denote by 
 $$\scrB^{\mu,d}(\Omega;(E_0,F_0),(E_1,F_1)).$$ 
The scope of the following example is to illustrate the significance of order and type in this calculus.  

\begin{example}
Let $A=A_+$ be a differential operator on $\Omega$ with coefficients smooth up to the boundary. 
\begin{itemize}
 \item[a$)$] Let $A$ be of order $2$. We shall explain how both Dirichlet and Neumann problem for $A$ are included in 
  Boutet de Monvel's algebra. To this end let 
   $$\gamma_0 u:=u|_{\pO},\qquad \gamma_1 u:=\frac{\partial u}{\partial\nu}\Big|_{\pO}$$
  denote the operators of restriction to the boundary of functions and their derivative in direction of the exterior 
  normal, respectively. Moreover, let $S_j\in L^{3/2-j}_\cl(\pO)$, $j=0,1$, be invertible pseudodifferential 
  operators on the boundary of $\Omega$. Then 
   $$T_j:=S_j\gamma_j \colon\scrC^\infty(\Omega)\lra\scrC^\infty(\pO)$$
  are trace operators of order $2$ and type $j+1$. If $E_0=E_1:=\cz$, $F_1:=\cz$ and $F_0:=\{0\}$, then  
  $\calA_j:=\begin{pmatrix}A\\ T_j\end{pmatrix}$ belongs to $\scrB^{2,j+1}(\Omega;(\cz,0),(\cz,\cz))$. 
  In case $\calA_j$ is invertible, the inverses are of the form 
   $$\calA_j^{-1}=\begin{pmatrix}P_++G_j & K_j \end{pmatrix}
       \,\in\,\scrB^{-2,0}(\Omega;(\cz,\cz),(\cz,0));$$
 for the original Dirichlet and Neumann problem one finds 
  $$\begin{pmatrix}A\\ \gamma_j\end{pmatrix}^{-1}=\begin{pmatrix}P_++G_j & K_jS_j \end{pmatrix}.$$
 \item[b$)$] Let $A$ now have order $4$ and consider $A$ jointly with Dirichlet and Neumann condition. 
  We define 
   $$T:=\begin{pmatrix}S_0\gamma_0\\ S_1\gamma_1\end{pmatrix}\colon\scrC^\infty(\Omega)\lra
       \begin{matrix}\scrC^\infty(\pO)\\\oplus\\ \scrC^\infty(\pO)\end{matrix}
       \cong \scrC^\infty(\pO,\cz^2)$$
  with pseudodifferential isomorphisms $S_j\in L^{7/2-j}_\cl(\pO)$. Then $T$ is a trace operator of order $4$ 
  and type $2$, and  $\begin{pmatrix}A\\ T\end{pmatrix}$ belongs to $\scrB^{4,2}(\Omega;(\cz,0),(\cz,\cz^2))$. 
  The discussion of invertibility is similar as in a$)$. 
\end{itemize}
At first glance, the use of the isomorphisms $S_j$ may appear strange but, indeed, is just a choice of 
normalization of orders; it could be replaced by any other choice of normalization, resulting in a straightforward 
reformulation. 
\end{example}

As a matter of fact, with $\scrA\in\scrB^{\mu,d}(\Omega;(E_0,F_0),(E_1,F_1))$ as in 
\eqref{eq:bdm-mapping01} is associated a \emph{principal symbol} 
\begin{equation}\label{eq:bdm-symbol01}
 \sigma^\mu(\scrA)=\big(\sigma^\mu_\psi(\scrA),\sigma^\mu_\partial(\scrA)\big),
\end{equation}
that determines the ellipticity of $\calA$ $($see below$)$; the components are 
\begin{enumerate}
 \item the usual \emph{homogeneous principal symbol} of the pseudodifferential operator 
  $A$ $($restricted to $S^*\Omega$, the unit co-sphere bundle of $\Omega)$,  
    $$\sigma^\mu_\psi(\scrA):=\sigma^\mu_\psi(A):\pi_\Omega^*E_0\lra\pi_\Omega^*E_1,$$
  where $\pi_\Omega:S^*\Omega\to\Omega$ is the canonical projection, 
 \item the so-called \emph{principal boundary symbol} which is a vector bundle morphism 
   \begin{equation}\label{eq:bdm-boundarysymb01}
        \sigma^\mu_\partial(\scrA):
        \begin{matrix}\pi_{\pO}^*(\scrS(\rz_+)\otimes E_0^\prime)\\ \oplus\\ \pi_{\pO}^* F_0\end{matrix}
        \lra
        \begin{matrix}\pi_{\pO}^*(\scrS(\rz_+)\otimes E_1^\prime)\\ \oplus\\ \pi_{\pO}^* F_1\end{matrix},
   \end{equation}
    where $\pi_{\pO}:S^*\pO\to\pO$ again denotes the canonical projection and $E_j^\prime=E_j|_{\pO}$ 
    is the restriction of $E_j$ to the boundary. 
\end{enumerate}

%%%%%%%%%%%%%%%%%%%%%%%%%%%%%%%%%%%%%%%%%%%%%%%%%%
\subsubsection{Boutet de Monvel's algebra with APS conditions}\label{sec:03.1.2}

This extension of Boutet de Monvel's algebra has been introduced in \cite{Schu37}. 
Consider two pseudodifferential projections $P_j\in L^0_\cl(\pO;F_j,F_j)$, $j=0,1$,  on the boundary of $\Omega$. 
We denote by 
 $$\scrB^{\mu,d}(\Omega;(E_0,F_0;P_0),(E_1,F_1;P_1))$$
the space of all operators $\scrA\in\scrB^{\mu,d}(\Omega;(E_0,F_0),(E_1,F_1))$ such that 
 $$\scrA(1-\scrP_0)=(1-\scrP_1)\scrA=0,\qquad \scrP_j:=\begin{pmatrix}1&0\\0&P_j\end{pmatrix}.$$
If we denote by
 $$\scrC^\infty(\pO,F_j;P_j):=P_j\big(\scrC^\infty(\pO,F_j)\big)$$
the range spaces of the projections $P_j$, which are closed subspaces, then any such $\scrA$ induces 
continuous maps
 \begin{equation}\label{eq:bdm-mapping02}
  \scrA
  \colon
  \begin{matrix}
   \scrC^\infty(\Omega,E_0)\\
   \oplus\\
   \scrC^\infty(\pO,F_0;P_0)
  \end{matrix}
  \longrightarrow
  \begin{matrix}
   \scrC^\infty(\Omega,E_1)\\
   \oplus\\
   \scrC^\infty(\pO,F_1;P_1)
  \end{matrix}.  
 \end{equation}
For sake of clarity let us point out that $\scrA$ acts also as an operator as in \eqref{eq:bdm-mapping01} but 
it is the mapping property \eqref{eq:bdm-mapping02} in the subspaces determined by the projections which 
is the relevant one. 

The use of the terminology ``algebra" originates from the fact that operators can be composed in 
the following sense:
\begin{theorem}
Composition of operators induces maps 
\begin{align*}%\label{eq:comp}
 \scrB^{\mu_1,d_1}&(\Omega;(E_1,F_1;P_1),(E_2,F_2;P_2))\times 
 \scrB^{\mu_0,d_0}(\Omega;(E_0,F_0;P_0),(E_1,F_1;P_1))\\
 &\lra \scrB^{\mu_0+\mu_1,d}(\Omega;(E_0,F_0;P_0),(E_2,F_2;P_2)),
\end{align*}
where the resulting is $d=\max(d_0,d_1+\mu_0)$.  
% $$\mu=\mu_0+\mu_1,\qquad  d=\max(d_0,d_1+\mu_0).$$
\end{theorem}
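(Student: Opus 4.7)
The plan is to reduce the statement to the known composition theorem in the standard Boutet de Monvel calculus and then check that the two-sided projection compatibility survives composition. The latter is essentially formal, so the content of the theorem lies in the type arithmetic.

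First I would take $\scrA_1\in\scrB^{\mu_1,d_1}(\Omega;(E_1,F_1;P_1),(E_2,F_2;P_2))$ and $\scrA_0\in\scrB^{\mu_0,d_0}(\Omega;(E_0,F_0;P_0),(E_1,F_1;P_1))$ and, viewing them as elements of the underlying (unprojected) calculi $\scrB^{\mu_j,d_j}(\Omega;(E_j,F_j),(E_{j+1},F_{j+1}))$, invoke the standard composition theorem of Boutet de Monvel's algebra (see for instance \cite{Grub}, \cite{ReSc}, \cite{Schr}) to obtain $\scrA_1\scrA_0\in\scrB^{\mu_0+\mu_1,d}(\Omega;(E_0,F_0),(E_2,F_2))$ with $d=\max(d_0,d_1+\mu_0)$. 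This gives the order, the type, and membership in the Green--potential--trace block-matrix structure on the nose.

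Next I would verify the projection conditions. By definition of the projected classes,
\begin{equation*}
\scrA_0(1-\scrP_0)=0,\qquad (1-\scrP_1)\scrA_0=0,\qquad \scrA_1(1-\scrP_1)=0,\qquad (1-\scrP_2)\scrA_1=0.
\end{equation*}
From the first and last identities we read off immediately
\begin{equation*}
\scrA_1\scrA_0(1-\scrP_0)=\scrA_1\cdot 0=0,\qquad (1-\scrP_2)\scrA_1\scrA_0=0\cdot\scrA_0=0,
\end{equation*}
so that $\scrA_1\scrA_0$ satisfies the compatibility conditions defining the class $\scrB^{\mu_0+\mu_1,d}(\Omega;(E_0,F_0;P_0),(E_2,F_2;P_2))$.

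The only substantive step is the type formula $d=\max(d_0,d_1+\mu_0)$, which has nothing to do with the projections: it is the usual type computation for Green operators and trace operators in Boutet de Monvel's calculus, reflecting that an operator of type $d_0$ picks up an additional $\mu_0$ when composed from the left with an operator of order $\mu_0$ and type $d_1$, while the type $d_0$ of the right factor is preserved. Thus I would simply quote this from the references above rather than reprove it. The main obstacle, if any, is purely notational: one has to keep track of the block-matrix structure and the different positions (upper-left vs.\ the $K,T,Q$ entries) to see that the projections $\scrP_j$, which act as the identity on the $E_j$-component, commute through the matrix composition in the way required. This is immediate from the block-diagonal form $\scrP_j=\operatorname{diag}(1,P_j)$ assumed in the definition.
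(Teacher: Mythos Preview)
Your argument is correct and is precisely the natural one: reduce to the standard composition theorem in Boutet de Monvel's calculus for the order and type arithmetic, then verify the projection compatibility $(1-\scrP_2)\scrA_1\scrA_0=0$ and $\scrA_1\scrA_0(1-\scrP_0)=0$ formally from the defining relations. The paper itself states this theorem without proof, treating it as a known fact from \cite{Schu37} and the underlying Boutet de Monvel theory, so there is nothing further to compare against.
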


The Riemannian and Hermitian metrics allow us to define $L_2$-spaces $($and then $L_2$-Sobolev spaces$)$ 
of sections of the bundles over $\Omega$. Identifying these spaces with their dual spaces, as usually done for 
Hilbert spaces, we can associate with $\scrA$ its formally adjoint operator $\scrA^*$. Then the following is true:  

\begin{theorem}
Let $\mu\le0$. Taking the formal adjoint induces maps 
\begin{align*}%\label{eq:comp}
 \scrB^{\mu,0}&(\Omega;(E_0,F_0;P_0),(E_1,F_1;P_1))\lra 
 \scrB^{\mu,0}(\Omega;(E_1,F_1;P_1^*),(E_0,F_0;P_0^*)),  
\end{align*} 
where $P_j^*$ is the formal adjoint of the projection $P_j$.  
\end{theorem}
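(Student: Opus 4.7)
The plan is to reduce the statement to the analogous, already-known result in the standard Boutet de Monvel calculus and then simply track what the projection conditions do under taking adjoints.

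First, I would recall the classical fact (available in \cite{Grub}, \cite{ReSc}, \cite{Schr}) that, for order $\mu\le 0$ and type $d=0$, formal adjoints with respect to the $L_2$-inner products induced by the Riemannian and Hermitian metrics exist within the calculus, i.e.\ passage to the formal adjoint defines a map
\begin{equation*}
\scrB^{\mu,0}(\Omega;(E_0,F_0),(E_1,F_1))\lra \scrB^{\mu,0}(\Omega;(E_1,F_1),(E_0,F_0)).
\end{equation*}
In particular, given $\scrA$ as in \eqref{eq:bdm-mapping01} satisfying the two projection conditions $\scrA(1-\scrP_0)=0$ and $(1-\scrP_1)\scrA=0$, its formal adjoint $\scrA^*$ already belongs to $\scrB^{\mu,0}(\Omega;(E_1,F_1),(E_0,F_0))$; it only remains to verify that $\scrA^*$ satisfies the adjoint projection conditions.

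Next I would observe two essentially formal facts. (i) Since $P_j\in L^0_\cl(\pO;F_j,F_j)$ is a projection, so is $P_j^*$, because $(P_j^*)^2=(P_j^2)^*=P_j^*$, and the adjoint of a classical zero-order operator on $\pO$ remains in $L^0_\cl$. (ii) The block operator $\scrP_j=\begin{pmatrix}1&0\\0&P_j\end{pmatrix}$, regarded as acting on $L_2(\Omega,E_j)\oplus L_2(\pO,F_j)$, has formal adjoint $\scrP_j^*=\begin{pmatrix}1&0\\0&P_j^*\end{pmatrix}$, because the off-diagonal entries are zero and the identity on $L_2(\Omega,E_j)$ is self-adjoint.

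Now take the formal adjoint of the identity $\scrA(1-\scrP_0)=0$ to get $(1-\scrP_0^*)\,\scrA^*=0$; similarly, from $(1-\scrP_1)\scrA=0$ one obtains $\scrA^*(1-\scrP_1^*)=0$. These are exactly the two conditions required for $\scrA^*$ to lie in $\scrB^{\mu,0}(\Omega;(E_1,F_1;P_1^*),(E_0,F_0;P_0^*))$. The map thus defined on the projection-restricted class is clearly linear (and involutive up to sign conventions), which completes the argument.

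The only subtle step is the first one, i.e.\ justifying that $\scrA^*$ stays in Boutet de Monvel's class with the same order and type $0$; this is where the hypothesis $\mu\le 0$ is used, since for positive order the trace operator component cannot in general have type $0$. All remaining steps are purely algebraic identities obtained by transposing the defining projection relations, using that the adjoint of a block-diagonal operator is the block-diagonal operator of adjoints.
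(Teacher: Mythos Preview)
Your argument is correct and is the natural way to verify the statement. Note, however, that the paper does not actually give a proof of this theorem: it is stated as part of the background recollection of the extended Boutet de Monvel calculus from \cite{Schu37}, on the same footing as the composition theorem immediately preceding it. So there is no ``paper's own proof'' to compare against; your proposal simply supplies the standard verification, reducing to the classical adjoint result for $\scrB^{\mu,0}$ and then transposing the two defining identities $\scrA(1-\scrP_0)=0$ and $(1-\scrP_1)\scrA=0$.
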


Let us now describe the principal symbolic structure of the extended algebra. Since the involved $P_j$ 
are projections, also their associated principal symbols $\sigma^0_\psi(P_j)$ are projections 
$($as bundle morphisms$)$; thus their ranges define subbundles 
\begin{equation}\label{eq:FP}
 F_j(P_j):=\sigma^0_\psi(P_j)\big(\pi_{\pO}^*F_j\big)\subseteq \pi_{\pO}^*F_j.
\end{equation}
Note that, in general, $F_j(P_j)$ is not a pull-back to the co-sphere bundle of a bundle over the boundary $\pO$.   

The principal  boundary symbol of $\scrA\in\scrB^{\mu,d}(\Omega;(E_0,F_0;P_0),(E_1,F_1;P_1))$, which initially is 
defined as in \eqref{eq:bdm-boundarysymb01}, restricts then to a morphism 
\begin{equation}\label{eq:bdm-boundarysymb02}
 \begin{matrix}\pi_{\pO}^*(\scrS(\rz_+)\otimes E_0)\\ \oplus\\ F_0(P_0)\end{matrix}
  \lra
 \begin{matrix}\pi_{\pO}^*(H^{s-\mu}(\rz_+)\otimes E_1)\\ \oplus\\ F_1(P_1)\end{matrix}. 
\end{equation}
This restriction we shall denote by $\sigma_\partial^\mu(\scrA;P_0,P_1)$ and will call it again the 
\emph{principal boundary symbol} of $\scrA$; the \emph{principal symbol} of $\scrA$ is then the tuple 
\begin{equation}\label{eq:bdm-symbol02}
 \sigma^\mu(\scrA;P_0,P_1)=\big(\sigma^\mu_\psi(\scrA),\sigma^\mu_\partial(\scrA;P_0,P_1)\big). 
\end{equation}
The two components of the principal symbol behave multiplicatively under composition and are 
compatible with the operation of taking the formal adjoints in the obvious way. 

\begin{definition}\label{def:bdm-elliptic}
$\scrA\in\scrB^{\mu,d}(\Omega;(E_0,F_0;P_0),(E_1,F_1;P_1))$ is called \emph{$\sigma_\psi$-elliptic} 
if $\sigma^\mu_\psi(\scrA)$ is an isomorphism. It is called 
\emph{elliptic} if additionally $\sigma^\mu_\partial(\scrA;P_0,P_1)$ is an isomorphism. 
\end{definition}

%%%%%%%%%%%%%%%%%%%%%%%%%%%%%%%%%%%%%%%%%%%%%%%%%%
\subsubsection{Sobolev spaces and the fundamental theorem of elliptic theory}\label{sec:03.1.3}

In the following we let $H^s(\Omega,E)$ and $H^s(\pO,F)$ with $s\in\gz$ denote the standard scales of 
$L_2$-Sobolev spaces of sections in the bundles $E$ and $F$, respectively. Moreover, 
$H^s_0(\Omega,E)$ denotes the closure of $\scrC^\infty_0(\mathrm{int}\,\Omega,E)$ in $H^s(\Omega,E)$. 

Let $\scrA\in\scrB^{\mu,d}(\Omega;(E_0,F_0;P_0),(E_1,F_1;P_1))$. The range spaces 
 $$H^s(\pO,F_j;P_j):=P_j\big(H^s(\pO,F_j)\big)$$
are closed subspaces of $H^s(\pO,F_j)$, and $\scrA$ induces continuous maps 
\begin{equation}\label{eq:bdm-cont01}
 \begin{matrix}
       H^s(\Omega,E_0)\\ \oplus\\ H^{s}(\partial\Omega,F_0;P_0)
     \end{matrix}
     \lra
     \begin{matrix}
       H^{s-\mu}(\Omega,E_0)\\ \oplus\\ H^{s-\mu}(\partial\Omega,F_1;P_1)
     \end{matrix},
     \qquad s\ge d. 
\end{equation}
Similarly, the principal boundary symbol $\sigma^\mu(\scrA;P_0,P_1)$ induces morphisms 
\begin{equation}\label{eq:bdm-cont02}
 \begin{matrix}\pi_{\pO}^*(H^s(\rz_+)\otimes E_0)\\ \oplus\\ F_0(P_0)\end{matrix}
  \lra
 \begin{matrix}\pi_{\pO}^*(H^{s-\mu}(\rz_+)\otimes E_1)\\ \oplus\\ F_1(P_1)\end{matrix},
 \qquad s\ge d. 
\end{equation}
As a matter of fact, in the above Definition \ref{def:bdm-elliptic} of ellipticity it is equivalent considering the 
principal boundary symbol as a map \eqref{eq:bdm-boundarysymb01} or as a map \eqref{eq:bdm-cont02} 
for some fixed integer $s\ge d$. 

\begin{theorem}\label{thm:bdm-main}
For $\scrA\in\scrB^{\mu,d}(\Omega;(E_0,F_0;P_0),(E_1,F_1;P_1))$ the following statements are equivalent$:$
\begin{itemize}
 \item[a$)$] $\scrA$ is elliptic. 
 \item[b$)$] There exists an $s\ge\max(\mu,d)$ such that the map \eqref{eq:bdm-cont01} associated with $\scrA$ is 
  Fredholm. 
 \item[c$)$] For every $s\ge\max(\mu,d)$ the map \eqref{eq:bdm-cont01} associated with $\scrA$ is Fredholm. 
 \item[d$)$] There is an $\scrB\in \scrB^{-\mu,d-\mu}(\Omega;(E_1,F_1;P_1),(E_0,F_0;P_0))$  
  such that 
  \begin{align*}
   \scrB\scrA-\scrP_0 &\in \scrB^{-\infty,d}(\Omega;(E_0,F_0;P_0),(E_0,F_0;P_0)),\\
   \scrA\scrB-\scrP_1 &\in \scrB^{-\infty,d-\mu}(\Omega;(E_1,F_1;P_1),(E_1,F_1;P_1)).
  \end{align*}
\end{itemize}
Any such operator $\scrB$ is called a parametrix of $\scrA$.  
\end{theorem}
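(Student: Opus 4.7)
The plan is to establish the chain $\mathrm{(a)}\Rightarrow\mathrm{(d)}\Rightarrow\mathrm{(c)}\Rightarrow\mathrm{(b)}\Rightarrow\mathrm{(a)}$; the substantial work lies in $\mathrm{(a)}\Rightarrow\mathrm{(d)}$ (parametrix construction) and in the reverse direction $\mathrm{(b)}\Rightarrow\mathrm{(a)}$, while $\mathrm{(d)}\Rightarrow\mathrm{(c)}$ uses only compactness of smoothing remainders and $\mathrm{(c)}\Rightarrow\mathrm{(b)}$ is trivial.

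For $\mathrm{(a)}\Rightarrow\mathrm{(d)}$, I would follow the classical formal Neumann iteration carried out inside the \emph{projected} calculus. The principal interior symbol $\sigma_\psi^\mu(\scrA)$ and the principal boundary symbol $\sigma_\partial^\mu(\scrA;P_0,P_1)$ admit inverses as bundle morphisms---the latter between the sub-bundles in \eqref{eq:bdm-boundarysymb02} with the roles of $0$ and $1$ exchanged. Lifting these inverses first to an operator $\widetilde\scrB_0$ in the unprojected Boutet de Monvel class and then setting $\scrB_0 := \scrP_0 \widetilde\scrB_0 \scrP_1$ yields an element of $\scrB^{-\mu,d-\mu}(\Omega;(E_1,F_1;P_1),(E_0,F_0;P_0))$. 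At principal symbol level $\scrP_0,\scrP_1$ act as identities on the relevant sub-bundles $F_j(P_j)$, so one still obtains $\scrB_0\scrA - \scrP_0$ and $\scrA\scrB_0 - \scrP_1$ of strictly lower order. Iteration within the projected calculus (asymptotic summation of a formal Neumann series, available thanks to closure of the algebra under composition and asymptotic completion) removes all finite orders and produces a true parametrix $\scrB$ modulo smoothing operators.

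For $\mathrm{(d)}\Rightarrow\mathrm{(c)}$ I would invoke Rellich-type compactness, by which smoothing remainders in $\scrB^{-\infty,*}$ are compact between the Sobolev spaces appearing in \eqref{eq:bdm-cont01}; since $\scrP_j$ restricts to the identity on $H^s(\pO,F_j;P_j)$, this exhibits $\scrB$ as a two-sided Fredholm inverse of the map \eqref{eq:bdm-cont01} at \emph{every} $s \ge \max(\mu,d)$. The implication $\mathrm{(c)}\Rightarrow\mathrm{(b)}$ is immediate.

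The hard direction is $\mathrm{(b)}\Rightarrow\mathrm{(a)}$: a single Fredholm instance must force invertibility of both components of the principal symbol. If $\sigma_\psi^\mu(\scrA)$ failed at some $(x_0,\xi_0)$ with $x_0\in\mathrm{int}\,\Omega$, a standard Weyl/oscillatory-testing construction, localized away from $\pO$ and with vanishing boundary component, would violate Fredholmness. If $\sigma_\psi^\mu(\scrA)$ is invertible but $\sigma_\partial^\mu(\scrA;P_0,P_1)$ degenerates at some $(x_0',\xi_0')\in S^*\pO$, a parallel argument in a collar neighborhood uses the explicit model operators on $\rz_+$ together with the sub-bundle structure cut out by $F_0(P_0), F_1(P_1)$. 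The subtle point is that the test sections must lie in the ranges of $\scrP_0$, respectively of $\scrP_1^*$ in the cokernel argument; this is arranged by pre- and post-composing the usual singular sequences with the projections and exploiting that $\sigma^0_\psi(P_j)$ is the identity on $F_j(P_j)$. Alternatively, and perhaps more cleanly, the whole equivalence $\mathrm{(a)}\Leftrightarrow\mathrm{(b)}$ can be deduced as a special case of the abstract Toeplitz-type Fredholm theory of \cite{Seil} to be summarized in Section \ref{sec:04.2}, applied to the ambient calculus $\scrB^{\mu,d}$, thereby reducing the implication to the corresponding classical statement for unprojected Boutet de Monvel operators.
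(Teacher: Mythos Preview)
The paper does not prove this theorem at all: it is stated in Section~\ref{sec:03.1.3} without proof, as a known result imported from \cite{Schu37} (where the APS-extended Boutet de Monvel calculus was introduced). So there is no ``paper's own proof'' to compare against.

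Your proof plan is sound and follows the standard architecture. The chain $\mathrm{(d)}\Rightarrow\mathrm{(c)}\Rightarrow\mathrm{(b)}$ is routine, and your outline of $\mathrm{(a)}\Rightarrow\mathrm{(d)}$ via lifting inverse symbols, sandwiching with $\scrP_0,\scrP_1$, and Neumann iteration is the right idea. One point deserving care is whether the formal Neumann series can really be carried out \emph{inside the projected class}: the remainder $R=\scrB_0\scrA-\scrP_0$ lives in $T^{-1}(\frakg;P_0,P_0)$ in the notation of Section~\ref{sec:04.2}, and one must check that asymptotic summation of $\scrP_0-R+R^2-\cdots$ stays in this class (it does, since each summand is sandwiched by $\scrP_0$ and asymptotic completeness is inherited from the ambient algebra, but this is precisely the content of the arguments in \cite{Seil}). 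For $\mathrm{(b)}\Rightarrow\mathrm{(a)}$ your Weyl-sequence sketch is plausible but the projection-compatibility of the singular sequences is genuinely delicate; your own closing remark is the right move here.

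Indeed, your ``alternative'' route---reducing everything to the abstract Toeplitz-type theory of \cite{Seil}---is exactly how the paper itself organizes matters later: Theorems~\ref{thm:seil2} and~\ref{thm:seil1} in Section~\ref{sec:04.2} give precisely $\mathrm{(a)}\Leftrightarrow\mathrm{(d)}$ and $\mathrm{(d)}\Leftrightarrow\mathrm{(b)}$ for any Toeplitz algebra built over a base calculus with the Fredholm property, and the classical (unprojected) Boutet de Monvel theorem supplies that base case. This is both cleaner and more in the spirit of the present paper than the direct Weyl-sequence argument.
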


\begin{remark}
By $($formally$)$ setting $E_0$ and $E_1$ equal to zero, the above block-matrices reduce to the entry in 
the lower-right corner. The calculus thus reduces to one for pseudodifferential operators on the boundary. 
We shall use the notation $L^\mu_\cl(\pO;F_0,F_1)$ and $L^\mu_\cl(\pO;(F_0;P_0),(F_1;P_1))$, 
respectively. The ellipticity of $Q\in L^\mu_\cl(\pO;(F_0;P_0),(F_1;P_1))$ is then desribed by one symbol only, 
namely $\sigma_\psi^\mu(Q):F_0(P_0)\to F_1(P_1)$, cf.\ \eqref{eq:FP}. 
\end{remark}

%%%%%%%%%%%%%%%%%%%%%%%%%%%%%%%%%%%%%%%%%%%%%%%%%%
\subsection{Example: The Cauchy-Riemann operator on the unit disc}\label{sec:03.1.5}

Let us discuss a simple example. Let $\Omega$ be the unit-disc in $\rz^2$ and 
$A=\overline\partial=(\partial_x+i\partial_y)/2$ be the Cauchy-Riemann operator. Identify the Sobolev spaces 
$H^s(\pO)$ with the cooresponding spaces of Fourier series, i.e., 
 $$f\in H^s(\pO) \iff \big(|n|^{s}\wh{f}(n)\big)_{n\in\gz}\in \ell^2(\gz).$$
The so-called \emph{Calder\'{o}n projector} $C$, defined by 
 $$\wh{Cf}(n)=\begin{cases}\wh{f}(n)&\quad: n\ge 0\\ 0&\quad: n<0\end{cases}$$
belongs to $L^0_\cl(\pO)$ and satisfies $C=C^2=C^*$. 
\forget{
Since $\gamma_0$ induces an isomorphism between the kernel of $A$ acting on $H^s(\Omega)$, $s\ge 1$, and 
$H^{s-1/2}(\pO;C)$ it is obvious that 
$\begin{pmatrix}A\\ \gamma_0\end{pmatrix}\colon
     H^s(\Omega)\lra
    \begin{matrix}H^{s-1}(\Omega)\\ \oplus\\ H^{s-1/2}(\pO;C)\end{matrix}$ 
is an isomorphism. To unify orders, let $S\in L^{1/2}_\cl(\pO)$ be invertible, $P:=SCS^{-1}$, and 
$T_0:=S\gamma_0$. 
}
Note that $\gamma_0$ induces an isomorphism between the kernel of $A$ acting on $H^s(\Omega)$, $s\ge 1$, and 
$H^{s-1/2}(\pO;C)$. To unify orders, let $S\in L^{1/2}_\cl(\pO)$ be invertible, $P:=SCS^{-1}$, and 
$T_0:=S\gamma_0$. Then 
\begin{equation}\label{eq:C-R-1}
 \begin{pmatrix}A\\ T_0\end{pmatrix}\colon
     H^s(\Omega)\lra
    \begin{matrix}H^{s-1}(\Omega)\\ \oplus\\ H^{s-1}(\pO;P)\end{matrix}
\end{equation}
is an isomorphism for any integer $s\ge 1$.   

\begin{lemma}
Let $T=P\phi\gamma_0$ with $\phi:H^{s-1/2}(\pO)\to H^{s-1}(\pO)$ beinga bounded operator. Then the map from 
\eqref{eq:C-R-1} with $T_0$ replaced by $T$ is Fredholm if, and only if, 
$P\phi:H^{s-1/2}(\pO;C)\to H^{s-1}(\pO;P)$ is Fredholm. 
\end{lemma}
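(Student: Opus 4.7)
The plan is to exploit the fact that
$\calA_0:=\begin{pmatrix}A\\ T_0\end{pmatrix}\colon H^s(\Omega)\to H^{s-1}(\Omega)\oplus H^{s-1}(\pO;P)$
is an isomorphism by \eqref{eq:C-R-1}, so the Fredholm property of
$\calA:=\begin{pmatrix}A\\ T\end{pmatrix}$ is equivalent to the Fredholm property of
$\calA\calA_0^{-1}$. Writing $\calA_0^{-1}=(R_1,R_2)$ with $R_1\colon H^{s-1}(\Omega)\to H^s(\Omega)$
and $R_2\colon H^{s-1}(\pO;P)\to H^s(\Omega)$, the identity $\calA_0\calA_0^{-1}=I$ yields
$AR_1=I$, $T_0R_1=0$, $AR_2=0$, and $T_0R_2=I$. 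Hence
\[
\calA\calA_0^{-1}=\begin{pmatrix}I & 0\\ TR_1 & TR_2\end{pmatrix}\colon
\begin{matrix}H^{s-1}(\Omega)\\\oplus\\ H^{s-1}(\pO;P)\end{matrix}\lra
\begin{matrix}H^{s-1}(\Omega)\\\oplus\\ H^{s-1}(\pO;P)\end{matrix},
\]
which is Fredholm if and only if the diagonal entry $TR_2\colon H^{s-1}(\pO;P)\to H^{s-1}(\pO;P)$ is.

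Next I would identify $TR_2$ explicitly. Since $AR_2f=0$ for every $f$, the element $R_2f$ is a holomorphic
function on $\Omega$, so its boundary value $\gamma_0R_2f$ lies in $H^{s-1/2}(\pO;C)$. The relation
$S\gamma_0R_2=T_0R_2=I$ on $H^{s-1}(\pO;P)$, combined with the intertwining $PS=SC$ (a direct
consequence of the definition $P=SCS^{-1}$), shows that $S$ restricts to an isomorphism
$S\colon H^{s-1/2}(\pO;C)\to H^{s-1}(\pO;P)$ whose two-sided inverse is precisely $\gamma_0R_2$.
Therefore
\[
TR_2=P\phi\,\gamma_0R_2=(P\phi)\circ S^{-1}\colon H^{s-1}(\pO;P)\to H^{s-1}(\pO;P),
\]
and since $S$ is an isomorphism between the relevant subspaces, $TR_2$ is Fredholm if and only if
$P\phi\colon H^{s-1/2}(\pO;C)\to H^{s-1}(\pO;P)$ is Fredholm. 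Combined with the first step, this yields
the lemma.

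The only point needing care is the identification $\gamma_0R_2=S^{-1}$, which rests on two facts: that
the ``Poisson-type'' component $R_2$ of $\calA_0^{-1}$ takes values in $\ker A$ (so its boundary
values land in the range of $C$), and that $S$ intertwines $C$ and $P$. Both are immediate from the
material recalled just before the lemma, so I do not anticipate any genuine obstacle; the argument is
essentially a one-line reduction to the invertible model $\calA_0$ once $R_2$ has been properly
interpreted as a boundary-value inverse on holomorphic functions.
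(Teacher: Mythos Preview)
Your proof is correct and follows essentially the same approach as the paper: compose $\begin{pmatrix}A\\T\end{pmatrix}$ with the inverse $\begin{pmatrix}B&K\end{pmatrix}$ of \eqref{eq:C-R-1}, obtain the lower-triangular matrix $\begin{pmatrix}1&0\\TB&TK\end{pmatrix}$, and then identify $TK=P\phi\gamma_0K=P\phi S^{-1}$ using $T_0K=S\gamma_0K=1$ together with the fact that $S^{-1}$ restricts to an isomorphism $H^{s-1}(\pO;P)\to H^{s-1/2}(\pO;C)$. Your explicit justification that $\gamma_0R_2$ lands in the range of $C$ (because $R_2f\in\ker A$) is a useful detail the paper leaves implicit.
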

\begin{proof}
Let $\begin{pmatrix}B&K\end{pmatrix}$ be the inverse of \eqref{eq:C-R-1}. Then the Fredholmness of 
$\begin{pmatrix}A\\ T\end{pmatrix}$ is equivalent to that of 
$\begin{pmatrix}A\\ T\end{pmatrix}\begin{pmatrix}B&K\end{pmatrix}=\begin{pmatrix}1&0\\  TB& TK\end{pmatrix}$ 
in $H^{s-1}(\Omega)\oplus H^{s-1}(\pO;P)$, i.e., to that of 
$TK:H^{s-1}(\pO;P)\to H^{s-1}(\pO;P)$. But now $1=T_0K=S\gamma_0K$ on $H^{s-1}(\pO;P)$ implies that 
$TK=P\phi S^{-1}$ on $H^{s-1}(\pO;P)$. It remains to observe that $S^{-1}:H^{s-1}(\pO;P)\to H^{s-1}(\pO;C)$ 
isomorphically. 
\end{proof}

Let us now interpret the previous observation within the framework of the Boutet de Monvel algebra with generalized APS 
conditions. Let $P\in L^0_\cl(\pO)$ be an arbitrary projection with $P-C\in L^{-1}_\cl(\pO)$, i.e., $P$ has homogeneous principal 
symbol
 $$\sigma_\psi^0(P)(\theta,\tau)=\sigma_\psi^0(C)(\theta,\tau)
     =\begin{cases}1&\quad: \tau=1\\ 0&\quad: \tau=-1\end{cases},$$
where we use polar coordinates on $\pO$ and $\tau$ denotes the covariable to $\theta$. 
By a straightforward calculation we find that the boundary smbol of $A$ is 
 $$\sigma^1_\partial(A)(\theta,\tau)=-\frac{1}{2}e^{i\theta}(\partial_t+\tau)\colon 
     \scrS(\rz_{+,t})\lra\scrS(\rz_{+,t}),\qquad \tau=\pm 1,$$
and therefore is surjective with kernel 
 $$\mathrm{ker}\,\sigma_\partial^1(A)(\theta,\tau)
     =\begin{cases}\mathrm{span}\,e^{-t}&\quad: \tau=1\\ 0&\quad: \tau=-1 \end{cases}.$$

\begin{lemma}
Let $T=B\gamma_0$ with $B\in L^{1/2}_\cl(\pO)$ and 
 $$\scrA:=\begin{pmatrix}A\\P T\end{pmatrix} \;\in\;\scrB^{1,1}(\Omega;(\cz,0;1),(\cz,\cz;P)).$$
The following properties are equivalent$:$
\begin{itemize}
 \item[a$)$] $\scrA$ is elliptic. 
 \item[b$)$] $\sigma^{1/2}_\psi(B)(\theta,1)\not=0$. 
 \item[c$)$] $PBC\in L^{1/2}_\cl(\pO;(\cz;C),(\cz;P))$ is elliptic. 
 \item[d$)$] $PB:H^{s-1/2}(\pO;C)\to H^{s-1}(\pO;P)$ is Fredholm for all $s$. 
\end{itemize}
\end{lemma}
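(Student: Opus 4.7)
The plan is to chain the equivalences in the order $(a)\iff(b)\iff(c)\iff(d)$, using in each step the principal symbolic description of ellipticity furnished by the APS-extended Boutet de Monvel calculus. Since $\dbar$ is interior-elliptic, $\sigma^1_\psi(\scrA)$ is automatically an isomorphism, so the whole content of the ellipticity of $\scrA$ lies in the invertibility of the principal boundary symbol at each point $(\theta,\tau)\in S^*\pO$, i.e.\ at $\tau=\pm 1$.

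First I would write that boundary symbol out explicitly. Combining the formula $\sigma^1_\partial(A)(\theta,\tau) = -\tfrac{1}{2}e^{i\theta}(\partial_t+\tau)$ recorded just above the lemma with the boundary symbol $\sigma^0_\psi(P)\,\sigma^{1/2}_\psi(B)\,\gamma_0$ of $PB\gamma_0$ produces a map $\scrS(\rz_+)\to\scrS(\rz_+)\oplus F(P)_{(\theta,\tau)}$, where $F(P)$ is the range subbundle from \eqref{eq:FP}. At $\tau=-1$ the fibre $F(P)_{(\theta,-1)}$ vanishes (because $\sigma^0_\psi(P)(\theta,-1)=0$), and the remaining component $-\tfrac{1}{2}e^{i\theta}(\partial_t-1)$ is surjective on $\scrS(\rz_+)$ with no kernel there (the formal solution $e^t$ is not Schwartz), hence is automatically an isomorphism. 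At $\tau=+1$ the fibre equals $\cz$, the first row is surjective with one-dimensional kernel $\mathrm{span}\,e^{-t}$, and the full two-row map is an isomorphism if and only if its restriction to that kernel, namely $\sigma^{1/2}_\psi(B)(\theta,1)\gamma_0(e^{-t}) = \sigma^{1/2}_\psi(B)(\theta,1)$, is non-zero. This yields $(a)\iff(b)$.

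For $(b)\iff(c)$, the remark closing Section~\ref{sec:03.1.3} says that ellipticity of $PBC\in L^{1/2}_\cl(\pO;(\cz;C),(\cz;P))$ is controlled by the single symbol $\sigma^0_\psi(P)\sigma^{1/2}_\psi(B)\sigma^0_\psi(C):F(C)\to F(P)$; since both fibres $F(C)_{(\theta,\tau)}$ and $F(P)_{(\theta,\tau)}$ vanish at $\tau=-1$ and equal $\cz$ at $\tau=+1$, invertibility reduces once more to $\sigma^{1/2}_\psi(B)(\theta,1)\neq 0$. Finally $(c)\iff(d)$ is the boundary case of Theorem~\ref{thm:bdm-main}: ellipticity characterises the Fredholm property in the projected Sobolev spaces for every $s$, and $PBC$ coincides with $PB$ on $H^{s-1/2}(\pO;C)$ because $Cu=u$ there. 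The only point that requires care throughout this plan is the correct interpretation of the target subbundle $F(P)$ of the APS-extended boundary symbol: its collapse to zero at $\tau=-1$ absorbs what would naively look like a second scalar condition on $B$, leaving only the clean scalar condition at $\tau=+1$; once this bookkeeping is in place, no serious technical obstacle remains, and each equivalence follows from a short principal symbol computation combined with the main theorem of the calculus.
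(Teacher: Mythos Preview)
Your proposal is correct and follows essentially the same route as the paper's own proof: both reduce the ellipticity of $\scrA$ to the boundary symbol, observe that the fibre of $F(P)$ collapses at $\tau=-1$ so that only the $\tau=+1$ component matters, and identify the resulting scalar condition as $\sigma_\psi^{1/2}(B)(\theta,1)\neq 0$. The paper then simply declares ``the remaining equivalences are then clear,'' whereas you spell out $(b)\!\iff\!(c)$ via the same fibre computation for $F(C)$ and $F(P)$, and $(c)\!\iff\!(d)$ via Theorem~\ref{thm:bdm-main}; these are exactly the intended justifications.
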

\begin{proof}
Clearly, the homogeneous principal symbol of $A$ never vanishes. The principal boundary symbol is 
given by 
\begin{align*}
\sigma^1_\partial(\scrA)(\theta,-1)
 &=\begin{pmatrix}\sigma^1_\partial(A)(\theta,-1)\\0\end{pmatrix}\colon
      \scrS(\rz_+)\lra\begin{matrix}\scrS(\rz_+)\\ \oplus \\ \{0\}\end{matrix},\\
\sigma^1_\partial(\scrA)(\theta,1)
 &=\begin{pmatrix}\sigma^1_\partial(A)(\theta,1)\\ \sigma_\psi^{1/2}(B)(\theta,1)\gamma_0\end{pmatrix}\colon
      \scrS(\rz_+)\lra\begin{matrix}\scrS(\rz_+)\\ \oplus \\ \cz\end{matrix}, 
\end{align*}
where $\gamma_0u=u(0)$ for every $u\in\scrS(\rz_+)$. Thus ellipticity of $\scrA$ is equivalent to the non-vanishing 
of $\sigma^{1/2}_\psi(B)(\theta,1)$. The remaining equivalences are then clear. 
\end{proof}

%%%%%%%%%%%%%%%%%%%%%%%%%%%%%%%%%%%%%%%%%%%%%%%%%%
\subsection{Boundary value problems for complexes}\label{sec:03.2}

In the following we shall consider a complex 
\begin{equation}\label{eq:bvp-complexA}
     \frakA:0\lra H^s(\Omega,E_0)\xrightarrow{A_{0}}
     H^{s-\nu_0}(\Omega,E_1)\xrightarrow{A_1}
     \ldots\xrightarrow{A_n}
     H^{s-\nu_{n}}(\Omega,E_{n+1})\lra0
\end{equation}
with $A_j=\wt{A}_{j,+}+G_j\in B^{\mu_j,d_j}(\Omega;E_j,E_{j+1})$  
and $\nu_j:=\mu_0+\ldots+\mu_j$, where $s$ is assumed to be so large that all mappings have sense 
(i.e., $s\ge\nu_j$ and $s\ge d_{j}+\nu_{j-1}$ for every $j=0,\ldots,n)$.  

\begin{definition}
The complex $\frakA$ is called $\sigma_\psi$-elliptic, if the associated family of complexes made up by the 
homogeneous principal symbols $\sigma_\psi^{\mu_j}(A_j)$, which we shall denote by $\sigma_\psi(\frakA)$, 
is an exact family.  
\end{definition}

Let us now state one of the main theorems of this section, concerning the existence and structure of complementing 
boundary conditions. 

\begin{theorem}\label{thm:complex-main}
Let $\frakA$ as in \eqref{eq:bvp-complexA} be $\sigma_\psi$-elliptic. 
\begin{itemize}
\item[a$)$] There exist bundles $F_1,\ldots,F_{n+2}$ and projections $P_j\in L^0_\cl(\pO;F_j,F_j)$ 
 such that the complex $\frakA$ can be completed to a Fredholm morphism 
 $($in the sense of Section $\mathrm{\ref{sec:02.2}})$
$$
\begin{CD}
0 @>>>  H_0 @>A_{0}>> H_1 @>A_{1}>> \ldots @>A_{n}>> H_{n+1} @>>> 0 \\
@. @VV{T_{0}}V @VV{T_{1}}V @. @VV{T_{n+1}}V \\ 
0 @>>>  L_0 @>Q_{0}>> L_1 @>Q_{1}>> \ldots @>Q_{n}>> L_{n+1} @>>> 0 
\end{CD}
$$
where we use the notation 
 $$H_j:=H^{s-\nu_{j-1}}(\Omega,E_{j}),\qquad L_j:=H^{s-\nu_{j}}(\pO,F_{j+1};P_{j+1}),$$
the $T_j$ are trace operators of order $\mu_j$ and type $0$ and 
 $$Q_j\in L_\cl^{\mu_{j+1}}(\pO;(F_{j+1};P_{j+1}),(F_{j+2};P_{j+2})).$$ 
In fact, all but one of the $P_j$ can be chosen to be the identity. 
Moreover, it is possible to choose all projections equal to the identity if, and only if, the 
index bundle of $\frakA$ satisfies 
 $$ \mathrm{ind}_{S^*\partial\Omega}\,\sigma_\partial(\mathfrak{A})
     \in\pi^* K(\partial\Omega),$$
where $\pi:S^*\partial\Omega\to\Omega$ is the canonical projection.
\item[b$)$] A statement analogous to a$)$ holds true with the trace operators $T_j$ substituted 
by $K_j$ with potential operators $K_j:L_j\to H_j$ of order $-\mu_j$ 
\end{itemize}
\end{theorem}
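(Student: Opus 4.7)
My strategy is to reduce Theorem \ref{thm:complex-main} to the abstract Fredholm theory for Toeplitz-type operator complexes developed in Sections \ref{sec:05} and \ref{sec:06}, after identifying the relevant $K$-theoretic boundary obstruction in the spirit of Atiyah-Bott. First I would verify that $\sigma_\psi$-ellipticity of $\frakA$ forces the family of boundary symbol complexes $\sigma_\partial(\frakA)$ to be a Fredholm family over $S^*\pO$: since the principal interior symbols are invertible at every $(x',\xi')\in S^*\pO$, the resulting fibrewise complex of ordinary differential operators on $\rz_+$ has elliptic principal part on $\rz$, and the standard half-line Sobolev argument (Schwartz decay at infinity plus finitely many boundary degrees of freedom) forces finite-dimensional cohomology fibrewise. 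This is the exact analogue, for complexes, of the classical fact that $\sigma_\psi$-ellipticity of a single operator gives a Fredholm boundary symbol.

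After a suitable order reduction and a stabilization (necessary because fibrewise cohomology dimensions may jump), the alternating sum of the cohomology bundles produces a well-defined index element
$$\mathrm{ind}_{S^*\pO}\,\sigma_\partial(\frakA)\in K(S^*\pO),$$
independent of the chosen stabilization, which plays the role of the Atiyah-Bott obstruction for $\frakA$. At this point I would invoke the abstract machinery of Sections \ref{sec:05}--\ref{sec:06}: given an elliptic complex in a calculus equipped with a principal-symbolic notion of ellipticity, that theory constructs a complementing Toeplitz-type morphism whose mapping cone is elliptic. Concretely, I would stabilize so that the cohomology of $\sigma_\partial(\frakA)$ is concentrated in a single degree as a subbundle $V\hookrightarrow\pi_{\pO}^*F$ for some bundle $F\to\pO$, pick a projection $P\in L^0_{\cl}(\pO;F,F)$ with $\sigma_\psi^0(P)$ projecting onto $V$ (cf.\ \eqref{eq:FP}), and lift the resulting fibrewise complementing morphism to genuine trace and boundary pseudodifferential operators via the surjectivity of the boundary symbol map in Boutet de Monvel's algebra. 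The mapping cone of the resulting morphism is elliptic in the sense of Definition \ref{def:bdm-elliptic}, hence Fredholm by Theorem \ref{thm:bdm-main}. The ``all but one projection equal to the identity'' refinement follows by inserting pseudodifferential order reductions between consecutive $T_j$ and $Q_j$ in order to concentrate the nontrivial Toeplitz condition at a single position of the sequence. The Atiyah-Bott dichotomy is then transparent: one can take $V=\pi_{\pO}^*W$ for some subbundle $W$ of $F$ over $\pO$ (so that $P$ may be chosen to be the identity at every position) precisely when $\mathrm{ind}_{S^*\pO}\,\sigma_\partial(\frakA)\in \pi^*K(\pO)$. Part (b) is then obtained by applying the same construction to the formally adjoint complex $\frakA^*$ and using that formal adjunction interchanges trace and potential operators within the extended calculus.

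The main technical obstacle is the $K$-theoretic bookkeeping: producing a single, globally well-defined index element when fibrewise cohomology jumps, and then realizing the chosen stabilization by a \emph{smooth} morphism of complexes over $S^*\pO$ that can be lifted to operators in the extended Boutet de Monvel algebra rather than being merely a pointwise gadget. This is precisely what the abstract framework of Sections \ref{sec:05}--\ref{sec:06} is engineered to handle; hence the bulk of the work in the present section reduces to verifying that Boutet de Monvel's extended algebra satisfies the axiomatic hypotheses of that framework, after which both parts of the theorem follow by direct specialization.
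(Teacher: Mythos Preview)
Your overall architecture is close in spirit to the paper's, but there is a genuine gap at the heart of the argument: you attribute the construction of the complementing boundary morphism to ``the abstract machinery of Sections \ref{sec:05}--\ref{sec:06}'', and this is not what those sections provide. Sections \ref{sec:05}--\ref{sec:06} do two things only: (i) characterize Fredholmness of a complex in $L^\bullet_\cl$ or $T^\bullet_\cl$ by ellipticity of its symbol complex, and (ii) lift an elliptic \emph{quasicomplex} of operators to a genuine complex with the same symbols. They do \emph{not} manufacture a complementing morphism out of a $\sigma_\psi$-elliptic complex. In the paper that step is the content of Theorem \ref{thm:index-element}, which is an explicit, hands-on iterative construction at the level of principal boundary symbols: starting at the top degree and working down, one adjoins trivial summands $\cz^{\ell_j}$ and potential symbols $k_j$, using at each stage the orthogonal projection $\pi_j=1-\fraka_j^*\frakd_{j+1}^{-1}\fraka_j$ onto $\ker\fraka_j$ to force the next map to land where it should. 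Your phrase ``stabilize so that the cohomology is concentrated in a single degree as a subbundle'' names the goal but not the mechanism; without something like this descending induction you have no smooth global morphism of symbol complexes to lift, only a $K$-class.

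Two further points. First, your explanation of why all but one projection can be taken to be the identity is off: it is not achieved by ``inserting order reductions between consecutive $T_j$ and $Q_j$''. Rather, it falls out of the iterative symbol construction above, where at every intermediate stage one adjoins a \emph{trivial} bundle $\cz^{\ell_j}$ (hence identity projection), and only at the final position does the kernel bundle $J_0\subset\cz^{\ell_0}$ force a genuine projection $P_0$. Order reductions in the paper serve a different purpose, namely to normalize orders and types to zero so that the symbol argument and the operator lifting can be carried out in $\scrB^{0,0}$. Second, the paper runs the duality in the opposite direction from you: it first establishes part (b) (potential operators) in the order/type-zero case via Theorem \ref{thm:complex-main-order-zero}, and then deduces part (a) by passing to the adjoint complex, since adjoints of potential operators are trace operators of type $0$. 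Your direction is not obviously wrong, but building trace symbols directly would require a dual version of the descending construction, which you have not supplied.
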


The main part of the proof will be given in the next Sections \ref{sec:03.2.1} and \ref{sec:03.2.2}. Before, 
let us first explain why, in fact, it suffices to demonstrate part b$)$ of the previous theorem in case all 
orders $\mu_j$, types $d_j$, and the regularity $s$ are equal to zero. Roughly speaking, 
this is possible by using order-reductions and by passing to adjoint complexes. 
In detail, the argument is as follows:

We shall make use of a certain family of isomorphism, whose existence is proved, for example, in 
Theorem 2.5.2 of \cite{Grub}:  there are operators 
$\Lambda^m_j\in B^{m,0}(\Omega;E_j,E_{j})$, $m\in\gz$, which are invertible in the algebra with  
$(\Lambda^m_j)^{-1}=\Lambda^{-m}_j$ and which induce isomorphisms 
$H^s(\Omega,E_j)\to H^{s-m}(\Omega,E_j)$ for every $s\in\rz$. Their adjoints, denoted by 
$\Lambda^{m,*}_j$, are then isomorphisms 
$\Lambda^{m,*}_j:H^{m-s}_0(\Omega,E_j)\to H^{-s}_0(\Omega,E_j)$ for every $s\in\rz$ and also 
$\Lambda^{m,*}_j\in B^{m,0}(\Omega;E_j,E_j)$. 

Assume now that Theorem \ref{thm:complex-main}.b$)$ holds true in case $\mu_j=d_j=s=0$. 

Given the complex $\frakA$ from \eqref{eq:bvp-complexA}, consider the new complex 
\begin{equation*}%\label{eq:bvp-complex1}
     \wt\frakA:0\lra L^2(\Omega,E_0)\xrightarrow{\wt{A}_{0}}
     L^2(\Omega,E_1)\xrightarrow{\wt{A}_1}
     \ldots\xrightarrow{\wt{A}_n}
     L^2(\Omega,E_{n+1})\lra0,
\end{equation*}
where $\wt{A}_j:=\Lambda^{s-\nu_j}_{j+1} A_j \Lambda^{\nu_{j-1}-s}_j$. The $\wt{A}_j$ have order and 
type $0$ and $\wt{\frakA}$ is $\sigma_\psi$-elliptic. Thus there are projections $\wt{P}_j$ and block-matrices 
 $$\wt{\calA}_j=\begin{pmatrix}-\wt{A}_j&\wt{K}_{j+1}\\0&\wt{Q}_{j+1}\end{pmatrix}
     \;\in\;
     \scrB^{0,0}(\Omega;(E_j,F_{j+2};\wt{P}_{j+2}),(E_{j+1},F_{j+3};\wt{P}_{j+3}))$$
$($with $j=-1,\ldots,n)$ that form a Fredholm complex. Now choose families of invertible pseudodifferential 
operators  
$\lambda_j^{r}\in L^r_\cl(\pO;F_{j+1},F_{j+1})$, $r\in\rz$, with $(\lambda_j^{r})^{-1}=\lambda_j^{-r}$. 
Then also the 
 $${\calA}_j:=
     \begin{pmatrix}\Lambda^{\nu_j-s}_{j+1}&0\\0&\lambda^{\nu_{j+2}-s}_{j+2}\end{pmatrix}
     \begin{pmatrix}-\wt{A}_j&\wt{K}_{j+1}\\0&\wt{Q}_{j+1}\end{pmatrix}
     \begin{pmatrix}\Lambda^{s-\nu_{j-1}}_{j}&0\\0&\lambda^{s-\nu_{j+1}}_{j+1}\end{pmatrix}$$
form a Fredholm complex. This shows b$)$ in the general case with the choice of  
\begin{equation}\label{eq:potential-operator}
 K_j:=\Lambda_j^{\nu_{j-1}-s}\wt{K}_j\lambda_j^{s-\nu_j},
\end{equation}
the projections $P_{j}=\lambda_{j-1}^{\nu_{j-1}-s}\wt{P}_{j}\lambda_{j-1}^{s-\nu_{j-1}}$ and 
 $$Q_j:=\lambda_{j+1}^{\nu_{j+1}-s}\wt{Q}_j\lambda_j^{s-\nu_j}\,\in\,
     L^{\mu_{j+1}}_\cl(\pO;(F_{j+1},P_{j+1}),(F_{j+2},P_{j+2})).$$ 

Now let us turn to a$)$. In case $\mu_j=d_j=s=0$ pass to the adjoint complex
\begin{equation*}%\label{eq:bvp-complex1}
     0\lra L^2(\Omega,\wt{E}_{0})\xrightarrow{{B}_{0}}
     L^2(\Omega,\wt{E}_1)\xrightarrow{{B}_1}
     \ldots\xrightarrow{{B}_n}
     L^2(\Omega,\wt{E}_{n+1})\lra0,
\end{equation*}
with $\wt{E}_j=E_{n+1-j}$ and $B_j=A_{n-j}^*$. Apply to this complex part b$)$ of the Theorem, with 
bundles $\wt{F}_j=F_{n+3-j}$ and projections $\wt{P}_j$ for $1\le j\le n+2$, resulting 
in a complex of block-matrices $\calB_j=\begin{pmatrix}-{B}_j&{K}_{j+1}\\0&\wt{Q}_{j+1}\end{pmatrix}$. 
Then also the $\calA_{j}:=\wt{\calB}_{n-j}^*$, $j=0,\ldots,n+1$, form a Fredholm complex and a$)$ follows 
with $T_j:=K_{n+1-j}^*$, $Q_j:=\wt{Q}^*_{n+1-j}$ and projections $P_j:=\wt{P}_{n+3-j}^*$. 

Finally, consider the general case of a$)$. First define 
$\wt{A}_j=\Lambda^{s-\nu_j}_{j+1} A_j \Lambda^{\nu_{j-1}-s}_j$ 
as above and then pass to the adjoint complex of the $\wt{B}_j:=\wt{A}_{n-j}^*$. 
Using b$)$, this leads to a Fredholm complex of operators 
 $$\wt{\calB}_j=\begin{pmatrix}-\wt{B}_j&\wt{K}_{j+1}\\0&\wt{Q}_{j+1}\end{pmatrix}
     \;\in\;
     \scrB^{0,0}(\Omega;(\wt{E}_j,\wt{F}_{j+2};\wt{P}_{j+2}),(\wt{E}_{j+1},\wt{F}_{j+3};\wt{P}_{j+3})).$$
Now we define 
 $${\calB}_j=
     \begin{pmatrix}
     \wt{\Lambda}_{j+1}^{s-\nu_{n-j-1},*}&0\\0&\wt{\lambda}^{s-\nu_{n-j-1},*}_{j+3}
     \end{pmatrix}
     \begin{pmatrix}-\wt{B}_j&\wt{K}_{j+1}\\0&\wt{Q}_{j+1}\end{pmatrix}
     \begin{pmatrix}\wt{\Lambda}_{j}^{\nu_{n-j}-s,*}&0\\0&\wt{\lambda}^{\nu_{n-j}-s,*}_{j+2}\end{pmatrix},$$
where the operators $\wt{\Lambda}^m_j$ refer to the bundle $\wt{E}_j$, while $\wt{\lambda}^r_j$ to the bundle 
$\wt{F}_{j}$. These $\calB_j$ then define a Fredholm complex acting as operators  
 $$\calB_j\colon
     \begin{matrix}H^{\nu_{n-j}-s}_0(\Omega,\wt{E}_j)\\ \oplus \\ H^{\nu_{n-j}-s}(\pO,\wt{F}_{j+2},{P}_{j+2}^\prime)\end{matrix}
     \lra
     \begin{matrix}H^{\nu_{n-j-1}-s}_0(\Omega,\wt{E}_{j+1})\\ \oplus \\ H^{\nu_{n-j-1}-s}(\pO,\wt{F}_{j+3},{P}_{j+2}^\prime)\end{matrix}$$
with resulting projections $P_j^\prime$. Now observe that 
$\calB_j^*=\begin{pmatrix}-A_{n-j}& 0\\  K_{j+1}^*& Q_{j+1}^*\end{pmatrix}$ with 
\begin{align*}
 Q_{j+1}=\wt{\lambda}^{s-\nu_{n-j-1},*}_{j+3} \wt{Q}_{j+1}\wt{\lambda}^{\nu_{n-j}-s,*}_{j+2},\qquad
 K_{j+1}=\wt{\Lambda}^{s-\nu_{n-j-1},*}_{j+1} \wt{K}_{j+1}\wt{\lambda}^{\nu_{n-j}-s,*}_{j+2},
\end{align*}
and that $\wt{\Lambda}^{s-\nu_{n-j-1},*}_{j+1} \wt{K}_{j+1}\wt{\lambda}^{\nu_{n-j-1}-s,*}_{j+2}$ is a potential operator of order $0$, mapping
 $$H^{\nu_{n-j-1}-s}(\pO,\wt{F}_{j+2},{P}_{j+2}^\prime)\lra 
     H^{\nu_{n-j-1}-s}_0(\Omega,\wt{E}_{j+1}).$$ 
We conclude that  
 $$T_j:=K_{n+1-j}^*: H^{s-\nu_{j-1}}(\Omega,E_j)\lra H^{s-\nu_{j}}(\pO,{F}_{j+1},{P}_{j+1}),
     \quad P_j:=(P_{n-j+3}^\prime)^*,$$
is a trace operator of order $\mu_j$ and type $0$ and the result follows by redefining $Q_{n-j+1}^*$ as $Q_j$.     

\begin{remark}
The use of order reductions in the above discussion leads to the fact that the operators $T_j$ and $Q_j$ 
constructed in Theorem $\mathrm{\ref{thm:complex-main}}$.a$)$ depend on the regularity $s$. 
However, once constructed them for some fixed choice $s=s_0$, it is a consequence of the general theory 
presented below in Section $\mathrm{\ref{sec:06.4.1}}$, that the resulting boundary value problem induces a 
Fredholm morphism not only for the choice $s=s_0$ but for all admissible $s$. 
An analogous comment applies to part b$)$ of 
Theorem $\mathrm{\ref{thm:complex-main}}$. 
\end{remark}

%%%%%%%%%%%%%%%%%%%%%%%%%%%%%%%%%%%%%%%%%%%%%%%%%%
\subsubsection{The index element of a $\sigma_\psi$-elliptic complex}\label{sec:03.2.1}
 
We start out with the $\sigma_\psi$-elliptic complex 
\begin{equation*}%\label{eq:bvp-complex2}
     \frakA:0\lra L^2(\Omega,E_0)\xrightarrow{A_{0}}
     L^2(\Omega,E_1)\xrightarrow{A_1}
     \ldots\xrightarrow{A_n}
     L^2(\Omega,E_{n+1})\lra0, 
\end{equation*}
with $A_j\in B^{0,0}(\Omega;E_j,E_{j+1})$. The associated principal boundary symbols $\sigma_\partial^0(A_j)$ 
form the family of complexes  
\begin{equation*}%\label{eq:bvp-complex2}
     \sigma_\partial(\frakA):0\lra\calE_0\xrightarrow{\sigma_\partial^0(A_{0})}
     \calE_1\xrightarrow{\sigma_\partial^0(A_1)}
     \ldots\xrightarrow{\sigma_\partial^0(A_n)}
     \calE_{n+1}\lra0, 
\end{equation*}
where we have used the abbreviation
 $$\calE_j:=\pi^*\big(L^2(\rz_+,E_j|_{\pO}\big),\qquad \pi\colon S^*\pO\lra\pO.$$
Due to the $\sigma_\psi$-ellipticity, $\sigma_\partial(\frakA)$ is a Fredholm family. 

\begin{theorem}\label{thm:index-element}
There exist non-negative integers $\ell_1,\ldots,\ell_{n+1}$ and principal boundary symbols  
 $$\fraka_j=\begin{pmatrix}-\sigma_\partial^0(A_j)&k_{j+1}\\0&q_{j+1}\end{pmatrix},\qquad j=0,\ldots,n,$$
of order and type $0$ such that 
 $$0\lra 
     \begin{matrix}\calE_0\\ \oplus\\ \cz^{\ell_1}\end{matrix}
     \xrightarrow{\fraka_0}
     \begin{matrix}\calE_1\\ \oplus\\ \cz^{\ell_2}\end{matrix}
     \xrightarrow{\fraka_1}\ldots 
     \xrightarrow{\fraka_{n-1}} 
     \begin{matrix}\calE_n\\ \oplus\\ \cz^{\ell_{n+1}}\end{matrix}
     \xrightarrow{\fraka_n}
     \begin{matrix}\calE_{n+1}\\ \oplus\\ 0\end{matrix}
     \lra 0$$
is a family of complexes which is exact in every position but possibly the first, with finite-dimensional kernel bundle 
$J_0:=\mathrm{ker}\,\fraka_0$. 
In particular, the index-element of $\mathfrak{A}$ is given by
\begin{equation}\label{eq:def-index}  
 \mathrm{ind}_{S^*\partial\Omega}\,\sigma_\partial(\mathfrak{A})
     =[J_0]+\sum_{j=1}^{n+1}(-1)^j[\cz^{\ell_j}].
\end{equation}
\end{theorem}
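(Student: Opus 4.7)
The plan is to construct the integers $\ell_j$ and the symbol morphisms $k_j$, $q_j$ by backward induction on $j=n,n-1,\ldots,0$, exploiting compactness of the base $S^*\pO$, the Fredholm hypothesis on $\sigma_\partial(\frakA)$, and the classical stabilisation principle: a family of morphisms between Hilbert bundles over a compact base whose fibrewise cokernel has uniformly bounded finite dimension becomes surjective after adjoining a section map from a trivial bundle of sufficiently large rank. In the base case $j=n$, one sets $\ell_{n+2}=0$ (so $q_{n+1}=0$); since $\mathrm{coker}\,\sigma_\partial^0(A_n)=\scrH^{n+1}(\sigma_\partial(\frakA))$ is a finite-dimensional family, stabilisation produces $\ell_{n+1}$ and a boundary-symbol-type map $k_{n+1}\colon\underline{\cz}^{\ell_{n+1}}\to\calE_{n+1}$ rendering $\fraka_n=(-\sigma_\partial^0(A_n),k_{n+1})$ fibrewise surjective.

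For the inductive step, assume $\fraka_{j+1},\ldots,\fraka_n$ have been built so that the partial augmented complex $\wt\frakA'$ is exact at positions $j+2,\ldots,n+1$. The naive extension $\hat\fraka_j$ defined by $u\mapsto(-\sigma_\partial^0(A_j)u,0)$ sends $\calE_j$ into $\ker\fraka_{j+1}\subset\calE_{j+1}\oplus\cz^{\ell_{j+2}}$. The short exact sequence of complexes $0\to\frakA'\to\wt\frakA'\to\frakB'\to 0$, with $\frakA'$ the unaugmented tail of $\sigma_\partial(\frakA)$ from position $j$ on and $\frakB'$ the trivial-bundle quotient (with differentials $q_{k+1}$), yields a long exact cohomology sequence. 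Combined with the inductive exactness and the finite-dimensionality of $\scrH^\bullet(\sigma_\partial(\frakA))$ and $\scrH^\bullet(\frakB')$, this forces $\ker\fraka_{j+1}/\mathrm{im}\,\hat\fraka_j$ to be finite-dimensional fibrewise. A further application of the stabilisation principle, performed inside the closed subspaces $\ker\fraka_{j+1}$, produces $\ell_{j+1}$ and a pair $(k_{j+1},q_{j+1})$ whose joint image lies in $\ker\fraka_{j+1}$ (equivalent to the chain conditions $\sigma_\partial^0(A_{j+1})k_{j+1}=k_{j+2}q_{j+1}$ and $q_{j+2}q_{j+1}=0$) and surjects onto the quotient; thus $\mathrm{im}\,\fraka_j=\ker\fraka_{j+1}$, extending exactness down by one step. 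At $j=0$ the induction stops, and since higher cohomology has been eliminated while the augmented complex $\wt\frakA$ inherits the Fredholm property from $\sigma_\partial(\frakA)$, the remaining cohomology $J_0:=\ker\fraka_0$ is a finite-dimensional vector bundle over $S^*\pO$.

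For the index identity, note that $\frakA$ embeds in $\wt\frakA$ as the subcomplex with terms $\calE_j\oplus 0$, the quotient $\frakB:=\wt\frakA/\frakA$ having terms $\cz^{\ell_{j+1}}$ and differentials $q_{j+1}$. Euler-characteristic additivity in $K(S^*\pO)$ applied to the short exact sequence $0\to\frakA\to\wt\frakA\to\frakB\to 0$ gives $[J_0]=\mathrm{ind}\,\wt\frakA=\mathrm{ind}_{S^*\pO}\,\sigma_\partial(\frakA)+\sum_{j=0}^{n}(-1)^{j}[\cz^{\ell_{j+1}}]$, and reindexing $k=j+1$ yields $\mathrm{ind}_{S^*\pO}\,\sigma_\partial(\frakA)=[J_0]+\sum_{k=1}^{n+1}(-1)^{k}[\cz^{\ell_k}]$, i.e.\ \eqref{eq:def-index}.

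The main obstacle is not the homological algebra, but ensuring that the stabilisations $k_{j+1}$ and $q_{j+1}$ can be realised as genuine principal boundary symbols of order-$0$ potential operators and classical principal symbols of order $0$ on $\pO$, despite the fact that the kernels $\ker\fraka_{j+1}$ and the cohomology quotients need not form smooth vector bundles over $S^*\pO$. Abstract Hilbert-bundle stabilisation produces only continuous bundle maps; the symbol-level realisation is handled by a partition-of-unity argument on $S^*\pO$ together with the density of Schwartz-class sections in $\pi^*(\scrS(\rz_+)\otimes E_{j+1}')$, which yields local symbol-type stabilisations that can be patched into a global one without affecting the index-theoretic content.
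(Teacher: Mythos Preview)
Your overall architecture---backward induction from $j=n$ down to $j=0$, stabilising at each stage to kill the cohomology in one more position---is exactly the paper's strategy, and your homological bookkeeping (the long exact sequence for finite-dimensionality of $\ker\fraka_{j+1}/\mathrm{im}\,\hat\fraka_j$, Euler-characteristic additivity for the index formula) is correct, if somewhat more elaborate than the paper's direct arguments.

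However, you have correctly identified the crux of the proof and then not resolved it. The density-plus-partition-of-unity argument you sketch does not work: approximating continuous sections of $\ker\fraka_{j+1}$ by Schwartz-type sections of the ambient bundle $\wt\calE_{j+1}$ produces sections that \emph{no longer lie in} $\ker\fraka_{j+1}$, so the chain condition $\fraka_{j+1}\fraka_j=0$ fails and you do not get a complex. Your remark that this is done ``without affecting the index-theoretic content'' suggests you might accept a quasicomplex, but the theorem asserts an honest family of complexes. You are also mistaken that $\ker\fraka_{j+1}$ might fail to be a smooth subbundle: once the tail $\fraka_{j+1},\ldots,\fraka_n$ is exact, it is.

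The paper's missing idea is this: rather than stabilising \emph{inside} $\ker\fraka_i$, one first stabilises in the full space $\wt\calE_i$ by a principal boundary symbol $b\colon\cz^{\ell_i}\to\wt\calE_i$ (this is the easy step you also have), and then \emph{projects} $b$ onto $\ker\fraka_i$ using the explicit formula
\[
\pi_i \;=\; 1 - \fraka_i^*\,\frakd_{i+1}^{-1}\,\fraka_i,\qquad
\frakd_{i+1}:=\fraka_i\fraka_i^*+\fraka_{i+1}^*\fraka_{i+1}.
\]
The point is that by inductive exactness the Laplacian $\frakd_{i+1}$ is a bijective principal boundary symbol, hence its inverse is again a principal boundary symbol, and therefore $\pi_i$ is a principal boundary symbol realising fibrewise the orthogonal projection onto $\ker\fraka_i$. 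Setting $(k_i,q_i)^{\!T}:=\pi_i b$ then gives a genuine principal boundary symbol with image in $\ker\fraka_i$, and one checks directly that $\fraka_{i-1}:=\begin{pmatrix}a_{i-1}&k_i\\0&q_i\end{pmatrix}$ surjects onto $\ker\fraka_i$. This closure of the boundary-symbol calculus under inversion of elliptic elements is what replaces your density argument and makes the construction go through.
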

\begin{proof} 
For notational convenience let us write $a_j:=-\sigma_\partial^0(A_j)$. 
The proof is an iterative procedure that complements, one after the other, the principal boundary 
symbols $a_n,a_{n-1},\ldots a_0$ to block-matrices. 

Since $\sigma_\partial(\frakA)$ is a Fredholm family,  
$a_n:\calE_n\lra\calE_{n+1}$
has fibrewise closed range of finite co-dimension. It is then a well-known fact, cf.\ Subsection 3.1.1.2 of \cite{ReSc} for example, 
that one can choose a principal potential symbol $k_{n+1}:\cz^{\ell_{n+1}}\to\calE_{n+1}$ such that 
 $$\begin{pmatrix}a_n&k_{n+1}\end{pmatrix}:
     \begin{matrix}\calE_n\\\oplus\\ \cz^{\ell_{n+1}}\end{matrix}\lra\calE_{n+1}$$
is surjective. Choosing $q_n:=0$ this defines $\fraka_n$. For $n=0$ this finishes the proof. So let us assume 
$n\ge1$. 

Set $\ell_{n+2}:=0$. 
Let us write $\wt{\calE}_j:=\calE_j\oplus\cz^{\ell_{j+1}}$ and assume that, 
for an integer $1\le i\le n$, we have constructed $\fraka_i,\ldots,\fraka_n$ such that 
 $$\wt{\calE}_{i}\xrightarrow{\fraka_i}\wt{\calE}_{i+1}\xrightarrow{\fraka_{i+1}}\ldots 
     \xrightarrow{\fraka_n} \wt{\calE}_{n+1}\lra 0$$
is an exact family. Then the families of Laplacians 
 $$\frakd_j=\fraka_{j-1}\fraka_{j-1}^*+\fraka_{j}^*\fraka_{j},\qquad i+1\le j\le n+1,$$
are fibrewise  isomorphisms, i.e., bijective principal boundary symbols. 
Thus also the inverses $\frakd_j^{-1}$ are principal boundary symbols. 
Then the principal boundary symbols 
 $$\pi_j:=1-\fraka_j^*\frakd_{j+1}^{-1}\fraka_j,\qquad i\le j\le n,$$
are fibrewise the orthogonal projections in $\wt{\calE}_j$ onto the kernel of 
$\fraka_j$; we shall verify this in detail at the end of the proof. 

Now consider the morphism  
 $$\begin{matrix}\calE_{i-1}\\ \oplus\\ \wt{\calE}_{i+1}\end{matrix}
     \xrightarrow{\begin{pmatrix}a_{i-1}&\fraka_{i}^*\end{pmatrix}}
     \wt{\calE_{i}},$$
where $\calE_i$ is considered as a subspace of  $\wt{\calE_{i}}=\calE_i\oplus\cz^{\ell_{i+1}}$. 
Since $a_{i-1}$ maps into the kernel of $\fraka_i$, while fibrewise the image of $\fraka_i^*$ is the orthogonal 
complement of the kernel of $\fraka_i$, we find that 
 $$\mathrm{im}\,\begin{pmatrix}a_{i-1}&\fraka_{i}^*\end{pmatrix}
     =\mathrm{im}\,a_{i-1}\oplus (\mathrm{ker}\,\fraka_{i})^\perp$$
is fibrewise of finite codimension in 
$\wt{\calE_{i}}=\mathrm{ker}\,\fraka_{i}\oplus (\mathrm{ker}\,\fraka_{i})^\perp$. 
Therefore, there exists an integer $\ell_{i}$ and a principal boundary symbol 
 $$b=\begin{pmatrix}k\\ q\end{pmatrix}:\cz^{\ell_{i}}
     \lra\wt\calE_{i}=\begin{matrix}\calE_i\\ \oplus\\\cz^{\ell_{i+1}}\end{matrix}$$
$($in particular, $k$ is a principal potential symbol$)$ such that 
 $$\begin{matrix}\calE_{i-1}\\ \oplus\\ \wt{\calE}_{i+1}\\ \oplus \\ \cz^{\ell_{i}}\end{matrix}
     \xrightarrow{\begin{pmatrix}a_{i-1}&\fraka_{i}^* & b \end{pmatrix}} \wt{\calE_{i}}$$
is surjective. We now define  
 $$\begin{pmatrix}k_{i}\\ q_{i}\end{pmatrix}:=\pi_i b: 
     \cz^{\ell_{i}}
     \lra\wt\calE_{i}=\begin{matrix}\calE_i\\ \oplus\\\cz^{\ell_{i+1}}\end{matrix}$$
and claim that 
 $$\fraka_{i-1}:=\begin{pmatrix}a_{i-1}&k_{i}\\0&q_{i}\end{pmatrix}:
     \begin{matrix}\wt\calE_{i-1}\\ \oplus\\ \cz^{\ell_{i}}\end{matrix}\lra 
     \mathrm{ker}\,\fraka_{i}$$
surjectively. In fact, by construction, $\fraka_{i-1}$ maps into the kernel of $\fraka_i$. 
Moreover, given $x$ in a fibre of $\mathrm{ker}\,\fraka_i$, there exists $(u,v,w)$ in the corresponding fibre 
of  $\calE_{i-1} \oplus \wt{\calE}_{i+1} \oplus \cz^{\ell_{i}}$ such that 
 $$x=a_{i-1}u+\fraka_i^*v+bw.$$
Being $\pi_i$ the orthogonal projection on the kernel of $\fraka_i$, we find 
 $$x=\pi_ix=a_{i-1}u+\pi_ibw=\fraka_{i-1}\begin{pmatrix}u\\w\end{pmatrix}.$$
Thus we have constructed $\fraka_{i-1}$ such that 
 $$\wt{\calE}_{i-1}\xrightarrow{\fraka_{i-1}}\wt{\calE}_{i}
     \xrightarrow{\fraka_i}\wt{\calE}_{i+1}\xrightarrow{\fraka_{i+1}}\ldots 
     \xrightarrow{\fraka_n} \wt{\calE}_{n+1}\lra 0$$
is an exact complex. Now repeat this procedure until $a_0$ has been modified. 

It remains to check that the $\pi_j$ in fact are projections as claimed:  
Clearly $\pi_j=1$ on $\mathrm{ker}\,\fraka_j$. Moreover, 
\begin{align*}
 \fraka_j^*\fraka_j\pi_j
 &=\fraka_j^*\fraka_j-\fraka_j^*\fraka_j\fraka_j^*\frakd_{j+1}^{-1}\fraka_j\\
 &=\fraka_j^*\fraka_j
     -\fraka_j^*(\underbrace{\fraka_j\fraka_j^*+\fraka_{j+1}^*\fraka_{j+1}}_{=\frakd_{j+1}})
       \frakd_{j+1}^{-1}\fraka_j
     +\underbrace{\fraka_j^*\fraka_{j+1}^*}_{=(\fraka_{j+1}\fraka_{j})^*=0}
        \fraka_{j+1}\frakd_{j+1}^{-1}\fraka_j=0. 
\end{align*}
Hence $\pi_j$ maps into $\mathrm{ker}\,\fraka_j^*\fraka_j=\mathrm{ker}\,\fraka_j$. Finally 
\begin{align*}
 (1-\pi_j)^2
 &=\fraka_j^*\frakd_{j+1}^{-1}\fraka_j\fraka_j^*\frakd_{j+1}^{-1}\fraka_j\\
 &=\fraka_j^*\frakd_{j+1}^{-1}(\underbrace{\fraka_j\fraka_j^*+\fraka_{j+1}^*\fraka_{j+1}}_{=\frakd_{j+1}})
      \frakd_{j+1}^{-1}\fraka_j-\fraka_j^*\frakd_{j+1}^{-1}\fraka_{j+1}^*\fraka_{j+1}\frakd_{j+1}^{-1}\fraka_j
   =1-\pi_j, 
\end{align*}
since $\fraka_j$ maps into $\mathrm{ker}\,\fraka_{j+1}$ and 
$\frakd_{j+1}^{-1}:\mathrm{ker}\,\fraka_{j+1}\to\mathrm{ker}\,\fraka_{j+1}$, hence 
$\fraka_{j+1}\frakd_{j+1}^{-1}\fraka_j=0$. The proof of Theorem \ref{thm:index-element} is complete. 
\end{proof}

%%%%%%%%%%%%%%%%%%%%%%%%%%%%%%%%%%%%%%%%%%%%%%%%%%
\subsubsection{The proof of Theorem $\mathrm{\ref{thm:complex-main}}$}\label{sec:03.2.2}

Let us now turn to the proof of Theorem \ref{thm:complex-main}.b$)$ in the case of $\mu_j=d_j=s=0$. 
In fact, the statement it is a consequence of the following Theorem \ref{thm:complex-main-order-zero} which 
is slightly more precise. In its proof we shall apply some results for complexes on manifolds with boundary 
which we shall provide in Section \ref{sec:06.4} below; these results in turn are a consequence of our general 
theory for complexes in operator algebras developed in Sections \ref{sec:05} and \ref{sec:06}. 

\begin{theorem}\label{thm:complex-main-order-zero}
Let notation be as in Section $\mathrm{\ref{sec:03.2.1}}$. Then there exist non-negative integers 
$\ell_0,\ldots,\ell_{n+1}$, operators 
 $$\calA_j=\begin{pmatrix}-A_j&K_{j+1}\\ 0&Q_{j+1}\end{pmatrix}
     \;\in\; \calB^{0,0}(\Omega;(E_j,\cz^{\ell_{j+1}}),(E_{j+1},\cz^{\ell_{j+2}})),\qquad j=0,\ldots,n,$$
and 
 $$\calA_{-1}=\begin{pmatrix}0&K_{0}\\0&Q_{0}\end{pmatrix}
     \;\in\; \calB^{0,0}(\Omega;(0,\cz^{\ell_{0}},P_0),(E_{0},\cz^{\ell_{1}}))$$
with a projection $P_0\in L^0(\partial\Omega;\cz^{\ell_{0}},\cz^{\ell_{0}})$ such that 
 $$0\lra\begin{matrix}0\\ \oplus\\ L^2(\pO,\cz^{\ell_0};P_0)\end{matrix}
     \xrightarrow{\calA_{-1}}\ldots 
%     \begin{matrix}L^2(\Omega,E_0)\\ \oplus\\ L^2(\pO,\cz^{\ell_1})\end{matrix}
     \xrightarrow{\calA_{j}}
     \begin{matrix}L^2(\Omega,E_{j+1})\\ \oplus\\ L^2(\pO,\cz^{\ell_{j+2}})\end{matrix}
%     \xrightarrow{\calA_1}\ldots 
%    \xrightarrow{\fraka_{n-1}} 
%     \begin{matrix}\calE_n\\ \oplus\\ \cz^{\ell_{n+1}}\end{matrix}
     \ldots\xrightarrow{\calA_n}
     \begin{matrix}L^2(\Omega,E_{n+1})\\ \oplus\\ 0\end{matrix}
     \lra 0$$
is a Fredholm complex. If, and only if,  
\begin{equation}\label{eq:index-element}
 \mathrm{ind}_{S^*\partial\Omega}\,\sigma_\partial(\frakA)\in\pi^* K(\partial\Omega),
\end{equation}
i.e., the index-element of the complex $\frakA$ belongs to the pull-back of the $K$-group of the boundary under 
the canonical projection $\pi:S^*\partial\Omega\to\partial\Omega$, we may replace $\cz^{\ell_0}$ by a vector 
bundle $F_0$ over $\partial\Omega$ and $P_0$ by the identity map. 
\end{theorem}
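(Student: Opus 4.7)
The plan is to separate the construction into a symbolic step based on Theorem \ref{thm:index-element}, a quantization step relying on the abstract theory of Sections \ref{sec:05}--\ref{sec:06} applied to Boutet de Monvel's algebra, and finally a global-projection step that absorbs the surviving cohomology bundle $J_0$.

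First, I would apply Theorem \ref{thm:index-element} to produce principal boundary symbols $\fraka_0,\ldots,\fraka_n$ of order and type $0$, forming a family of complexes exact in every position but the first, with finite-rank cohomology bundle $J_0$ on $S^*\pO$. Each $\fraka_j$ has the block form prescribed by that theorem, with upper-left entry $-\sigma^0_\partial(A_j)$. I would then quantize each $\fraka_j$ to a block-matrix operator $\wt\calA_j\in\scrB^{0,0}(\Omega;(E_j,\cz^{\ell_{j+1}}),(E_{j+1},\cz^{\ell_{j+2}}))$, using standard quantization of principal potential symbols (for $K_{j+1}$) and of classical pseudodifferential symbols on $\pO$ (for $Q_{j+1}$). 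These operators form a complex only \emph{modulo smoothing}, so I would invoke the general framework for complexes in operator algebras developed in Sections \ref{sec:05}--\ref{sec:06}, whose translation to Boutet de Monvel's algebra (Section \ref{sec:06.4}) provides smoothing corrections $\calA_j=\wt\calA_j+R_j$, $R_j\in\scrB^{-\infty,0}$, yielding $\calA_{j+1}\calA_j=0$ without altering the principal symbols.

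Next, to absorb the remaining cohomology $J_0$ at the first position, I would use that $J_0$ is a finite-rank smooth subbundle of $\wt\calE_0:=\calE_0\oplus\pi^*\cz^{\ell_1}$ over $S^*\pO$. By a standard stabilisation argument there exist an integer $\ell_0$, a bundle map $(k_0,q_0)^{\!\top}\colon\pi^*\cz^{\ell_0}\to\calE_0\oplus\pi^*\cz^{\ell_1}$, and a bundle projection $p_0$ on $\pi^*\cz^{\ell_0}$ such that $(k_0,q_0)^{\!\top}$ factors through $p_0$ and restricts to an isomorphism $\mathrm{range}(p_0)\xrightarrow{\sim}J_0$. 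Quantizing $p_0$ to a genuine classical pseudodifferential projection $P_0\in L^0_\cl(\pO;\cz^{\ell_0},\cz^{\ell_0})$, $P_0^2=P_0$, by the usual Neumann-series upgrade of an approximate projection, and quantizing $(k_0,q_0)^{\!\top}$ to a potential operator $K_0$ and an operator $Q_0\in L^0_\cl(\pO;(\cz^{\ell_0};P_0),\cz^{\ell_1})$ with the prescribed principal symbols, I set $\calA_{-1}=\bigl(\begin{smallmatrix}0&K_0\\0&Q_0\end{smallmatrix}\bigr)$ and apply another smoothing correction as in the previous paragraph to secure $\calA_0\calA_{-1}=0$. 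The resulting principal boundary symbol sequence is fibrewise exact, and together with the $\sigma_\psi$-ellipticity of $\frakA$ the abstract Fredholm criterion for complexes in Boutet de Monvel's algebra developed in Section \ref{sec:06.4} yields that the extended operator complex is Fredholm.

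For the characterisation via the Atiyah-Bott condition, the ``only if'' direction is immediate: if $P_0$ can be replaced by the identity on a bundle $F_0$ over $\pO$, then the index element of $\frakA$, computed as an alternating sum over the bundles entering the completed complex, equals $\pi^*[F_0]+\sum_{j=1}^{n+1}(-1)^j[\cz^{\ell_j}]\in\pi^*K(\pO)$. Conversely, if \eqref{eq:index-element} holds, then after absorbing the trivial summands $[\cz^{\ell_j}]$ one obtains $[J_0]\in\pi^*K(\pO)$; stabilising the auxiliary bundles in Theorem \ref{thm:index-element} (by enlarging the $\ell_j$) I can arrange that $J_0\cong\pi^*F_0$ as bundles over $S^*\pO$ for some $F_0$ over $\pO$, at which point $p_0$ becomes the pull-back of an identity map and $P_0$ can be taken to be the identity on $F_0$. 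The main technical obstacle in this plan is the very first step: upgrading the symbolic complex to an operator complex with $\calA_{j+1}\calA_j=0$ on the nose rather than merely modulo smoothing, and doing so while also accommodating the non-trivial projection $P_0$ at position $-1$. This is precisely the functionality provided by the abstract Toeplitz-type theory of Sections \ref{sec:05}--\ref{sec:06}, without which the construction would not close up into a genuine complex.
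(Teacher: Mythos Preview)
Your proposal is correct and follows essentially the same route as the paper: use Theorem \ref{thm:index-element} to build the exact symbol complex, extend it one step further at position $-1$ by a surjection onto $J_0$ together with a projection whose range is (isomorphic to) $J_0$, quantize and then invoke the lifting machinery of Section \ref{sec:06.4} to produce a genuine operator complex, and finally appeal to Theorem \ref{thm:complex_bdm_rev} for Fredholmness; the Atiyah--Bott discussion via stabilisation of $J_0$ is also the same as in the paper.

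One point you should make explicit: the statement requires the upper-left block of each $\calA_j$ to be \emph{exactly} $-A_j$, not $-A_j$ modulo smoothing. A generic application of Theorem \ref{thm:lift-bdm-rev} would only give you the latter. The paper isolates this in Proposition \ref{prop:lift_bdm_rev}, which uses the hypothesis $A_{j+1}A_j=0$ (already available since $\frakA$ is an honest complex) to guarantee that the smoothing corrections can be confined to the $K$- and $Q$-blocks. Your phrase ``Section \ref{sec:06.4} provides smoothing corrections'' covers this, but the structural preservation is the non-obvious part and deserves to be named.
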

\begin{proof}
%Let notation be as in the beginning of Section \ref{sec:03.2.1}. 
Repeating the construction in the proof of Theorem \ref{thm:index-element}, we can find $\ell_0$ and 
a boundary symbol 
 $$\fraka_{-1}=\begin{pmatrix}k_0\\q_0\end{pmatrix}:\cz^{\ell_0}\lra 
     \begin{matrix}\calE_0\\ \oplus\\ \cz^{\ell_1}\end{matrix},\qquad 
     \mathrm{im}\,\begin{pmatrix}k_0\\q_0\end{pmatrix}=\mathrm{ker}\,\fraka_0=J_0.$$
Therefore, 
 $$\cz^{\ell_0}=\mathrm{ker}\,\begin{pmatrix}k_0\\q_0\end{pmatrix}\oplus 
     \Big(\mathrm{ker}\,\begin{pmatrix}k_0\\q_0\end{pmatrix}\Big)^\perp\cong 
     \mathrm{ker}\,\begin{pmatrix}k_0\\q_0\end{pmatrix}\oplus J_0.$$
Now let $P_0$ be a projection whose principal symbol concides with the projection 
onto $J_0$ $($such a projection exists, cf.\ the appendix in \cite{Schu37}, for instance$)$. 
Then Proposition \ref{prop:lift_bdm_rev} 
implies the existence of $\calA_j$ as stated, forming a complex which is both $\sigma_\psi$- and 
$\sigma_\partial$-elliptic. Then the complex is Fredholm due to Theorem \ref{thm:complex_bdm_rev}. 

In case \eqref{eq:index-element} is satisfied, there exists an integer $L$ such that $J_0\oplus\cz^L$ is a pull-back 
of a bundle $F_0$ over $\partial\Omega$, i.e. $J_0\oplus\cz^L\cong\pi^*G$. Now replace $\ell_0$ and $\ell_1$ by 
$\ell_0+L$ and $\ell_1+L$, respectively. Extend $k_0$ and $k_1,\, q_1$ by 0 from $\cz^{\ell_0}$ to 
$\cz^{\ell_0}\oplus\cz^L$ and $\cz^{\ell_1}$ to $\cz^{\ell_1}\oplus\cz^L$, respectively. Moreover, extend $q_0$ 
to $\cz^{\ell_0}\oplus\cz^L$ by letting $q_0=1$ on $\cz^L$. After these modifications, rename $\ell_j+L$ by 
$\ell_j$ for $j=0,1$ as well as the extended $k_0$ and $q_0$ by $\wt{k}_0$ and $\wt{q}_0$, respectively. 
We obtain that 
 $$W:=\Big(\mathrm{ker}\,\begin{pmatrix}\wt{k}_0\\ \wt{q}_0\end{pmatrix}\Big)^\perp
     \cong J_0\oplus\cz^L\cong\pi^*F_0.$$
With an isomorphism $\alpha:\pi^*F_0\to W$ we then define the boundary symbol 
 $$\fraka_{-1}=\begin{pmatrix}{k}_0\\ {q}_0\end{pmatrix}
     :=\begin{pmatrix}\wt{k}_0\\ \wt{q}_0\end{pmatrix}\Big|_W\circ\alpha$$
and again argue as above to pass to a Fredholm complex of operators $\calA_j$. 
\end{proof}

%%%%%%%%%%%%%%%%%%%%%%%%%%%%%%%%%%%%%%%%%%%%%%%%%%
\subsubsection{General boundary value problems}\label{sec:03.2.3}

We have seen in Theorem \ref{thm:complex-main} that any $\sigma_\psi$-elliptic complex \eqref{eq:bvp-complexA} 
can be completed to a boundary value problem which results to be Fredholm. Vice versa, given a boundary value 
problem for $\frakA$, we can characterize when it is Fredholm: 

\begin{theorem}\label{thm:complex-main_2}
Let $\frakA$ as in \eqref{eq:bvp-complexA}.  
\begin{itemize}
\item[a$)$] Assume we are given a boundary value problem 
   $$
      \begin{CD}
      0 @>>>  H_0 @>A_{0}>> H_1 @>A_{1}>> \ldots @>A_{n}>> H_{n+1} @>>> 0 \\
     @. @VV{T_{0}}V @VV{T_{1}}V @. @VV{T_{n+1}}V \\ 
      0 @>>>  L_0 @>Q_{0}>> L_1 @>Q_{1}>> \ldots @>Q_{n}>> L_{n+1} @>>> 0 
     \end{CD}
  $$
  with spaces 
   $$H_j:=H^{s-\nu_{j-1}}(\Omega,E_{j}),\qquad L_j:=H^{s-\nu_{j}}(\pO,F_{j+1};P_{j+1}),$$
  trace operators of order $\mu_j$ and type $d_j$, and 
   $$Q_j\in L_\cl^{\mu_{j+1}}(\pO;(F_{j+1};P_{j+1}),(F_{j+2};P_{j+2})).$$ 
  Then the following statements are equivalent$:$
   \begin{itemize}
    \item[$1)$] The boundary value problem is Fredholm 
%$($i.e. the associated mapping cone is a Fredholm complex$)$.
    \item[$2)$] $\frakA$ is $\sigma_\psi$-elliptic and the family of complexes generated by the boundary 
     symbols 
       $$\begin{pmatrix}
           -\sigma_\partial^{\mu_j}(A_j)&\sigma_\partial^{\mu_j}(T_{j+1};P_{j+2})\\
           0&\sigma_\partial^{\mu_{j+1}}(Q_{j+1};P_{j+2},P_{j+3})
          \end{pmatrix},\qquad j=-1,\ldots,n,$$      
      associated with the mapping cone is an exact family.
   \end{itemize}
\item[b$)$] A statement analogous to a$)$ holds true with the trace operators $T_j:H_j\to L_j$ substituted 
by potential operators $K_j:L_j\to H_j$ of order $-\mu_j$. 
\end{itemize}
\end{theorem}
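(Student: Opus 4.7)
The plan is to regard the given boundary value problem as a single complex $\frakC_\bfT$ in the extended Boutet de Monvel algebra with global projection conditions, and then to reduce the claim to the general Fredholm characterization for complexes in such algebras developed in Section \ref{sec:06.4}. First I would form the mapping cone associated with the morphism $\bfT=(T_j)$ (or its analogue for potential operators in part b), whose constituent operators have block form
$$\calC_j = \begin{pmatrix} -A_j & 0 \\ T_j & Q_{j-1} \end{pmatrix}$$
and, after the appropriate reindexing of orders and types, lie in $\scrB^{\mu_j,d_j}(\Omega;(E_j,F_j;P_j),(E_{j+1},F_{j+1};P_{j+1}))$. By Definition \ref{def:mapping-cone}, the given boundary value problem is a Fredholm morphism if and only if $\frakC_\bfT$ is a Fredholm complex, so the theorem becomes a purely symbolic statement about $\frakC_\bfT$ inside the extended algebra.

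Next I would identify the two symbolic components of each $\calC_j$ as in \eqref{eq:bdm-symbol02}. Since $T_j$ and $Q_{j-1}$ have no interior pseudodifferential contribution on $S^*\Omega$, the homogeneous principal symbol $\sigma_\psi^{\mu_j}(\calC_j)$ reduces to $-\sigma_\psi^{\mu_j}(A_j)$ on the $E_j$-component and is trivial on the boundary component; hence the $\sigma_\psi$-exactness of $\frakC_\bfT$ over $S^*\Omega$ coincides exactly with $\sigma_\psi$-ellipticity of $\frakA$. On the other hand, the principal boundary symbol $\sigma_\partial^{\mu_j}(\calC_j;P_j,P_{j+1})$ defined in \eqref{eq:bdm-boundarysymb02} is precisely the block-matrix symbol written in condition (2) of the theorem, so its fibrewise exactness over $S^*\pO$ coincides with the exactness of the family in (2). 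With this symbolic translation in hand, the equivalence (1)$\Leftrightarrow$(2) reduces to the assertion that a complex of operators in the extended Boutet de Monvel algebra with projection conditions is Fredholm if and only if it is simultaneously $\sigma_\psi$- and $\sigma_\partial$-elliptic in the sense of complexes; this is exactly Theorem \ref{thm:complex_bdm_rev}. Part b is handled in the same way via the dual mapping-cone construction, or by the order-reduction and adjoint argument used in the proof of Theorem \ref{thm:complex-main}.

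The hardest part is the bookkeeping in Step 2: one must verify that the indexing convention in the block matrices of condition (2) is consistent with the mapping-cone convention of Definition \ref{def:mapping-cone}, that the orders $\mu_j$ and types $d_j$ compose correctly along the complex (so that $\calC_{j+1}\calC_j$ really lies in the algebra with the expected orders), and that the projections $P_{j+1}$ act compatibly with the symbols $\sigma_\partial^{\mu_j}(T_{j+1};P_{j+2})$ and $\sigma_\partial^{\mu_{j+1}}(Q_{j+1};P_{j+2},P_{j+3})$ as restricted morphisms between the subbundles $F_j(P_j)$ of \eqref{eq:FP}. Once this is checked, both implications follow at the symbolic level from the general Fredholm theorem for Toeplitz-type complexes established in Sections \ref{sec:05} and \ref{sec:06}, with no further analysis on the operator level required.
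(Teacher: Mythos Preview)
Your proposal is correct and follows essentially the same route as the paper: the paper's own proof is the single sentence ``this theorem is a particular case of Theorem~\ref{thm:complex_bdm_rev} (applied to the associated mapping cone),'' and you have simply unpacked what that sentence means at the level of symbols. Your remark that the main work is the bookkeeping between the mapping-cone convention of Definition~\ref{def:mapping-cone} and the block-matrix displayed in condition~(2) is apt; note in particular that the matrix in the statement is written in the upper-triangular form used in Theorem~\ref{thm:complex-main-order-zero} rather than the lower-triangular form of Definition~\ref{def:mapping-cone}, so a relabelling (or passage to the transposed/adjoint convention as in part~b)) is indeed needed to make the identification literal.
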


In fact, this theorem is a particular case of Theorem \ref{thm:complex_bdm_rev} $($applied to the associated 
mapping cone$)$. 

%%%%%%%%%%%%%%%%%%%%%%%%%%%%%%%%%%%%%%%%%%%%%%%%%%
\subsection{Example: The deRham complex}\label{sec:03.3}

Let $\mathrm{dim}\,\Omega=n+1$ and $E_k$ denote the $k$-fold exterior product of the $($complexified$)$ 
co-tangent bundle; sections in $E_k$ are complex differential forms of degree $k$ over $\Omega$. 
Let $d_k$ denote the operator of external differentiation on $k$-forms. The deRham complex 
\begin{equation*}%\label{eq:deRham}
     0\lra H^s(\Omega,E_0)\xrightarrow{d_0}
     H^{s-1}(\Omega,E_1)\xrightarrow{d_1}
     \ldots\xrightarrow{d_n}
     H^{s-n-1}(\Omega,E_{n+1})\lra0
\end{equation*}
$(s\ge n+1)$ is $\sigma_\psi$-elliptic and the associated principal boundary symbols induce an exact family of 
complexes. Therefore the deRham complex 
%\eqref{eq:deRham} 
is a Fredholm complex without adding any 
additional boundary conditions. However, one can also pose ``Dirichlet-conditions'', i.e., consider  
%the boundary value problem  
%   $$
%      \begin{CD}
%      \cdots  H^{s-j+1}(\Omega,E_{j-1}) @>d_{j-1}>> H^{s-j}(\Omega,E_j) @>d_j>> H^{s-j-1}(\Omega,E_{j+1}) \cdots \\
%      @VV{R_{j-1}}V @VV{R_{j}}V  @VV{R_{j+1}}V \\ 
%      \cdots  H^{s-j+1/2}(\pO,F_{j-1}) @>d_{j-1}>> H^{s-j-1/2}(\pO,F_j) @>d_j>> H^{s-j-3/2}(\pO,F_{j+1}) \cdots 
%     \end{CD},
%  $$
   $$
      \begin{CD}
      \cdots  @>d_{j-1}>> H^{s-j}(\Omega,E_j) @>d_j>> H^{s-j-1}(\Omega,E_{j+1}) @>d_{j+1}>> \cdots \\
      @. @VV{R_{j}}V  @VV{R_{j+1}}V  @.\\ 
      \cdots  @>d_{j-1}>> H^{s-j-1/2}(\pO,F_j) @>d_j>> H^{s-j-3/2}(\pO,F_{j+1}) @>d_{j+1}>> \cdots 
     \end{CD}
  $$
where the second row is the deRham-complex on the boundary and the $R$'s map forms on $\Omega$  
to their tangential 
part. This is also a Fredholm problem whose index coincides with the Euler characteristic of the pair $(\Omega,\pO)$.
Note that for meeting the set-up of Theorem \ref{thm:complex-main_2} one needs to replace the $R_k$ by 
$T_k:=S_kR_k$ and the differentials $d_{k-1}$ on the boundary by 
$Q_k:=S_kd_{k-1}S_{k-1}^{-1}$ with some invertible pseudodifferential operators 
$S_k\in L^{1/2}_\cl(\pO;F_k,F_k)$. 

We omit any details, since all these observations have already been mentioned in Example 9 of \cite{Dyni}. 

%%%%%%%%%%%%%%%%%%%%%%%%%%%%%%%%%%%%%%%%%%%%%%%%%%
\subsection{Example: The Dolbeault complex}\label{sec:03.3.5}

In this section we show that the Dolbeault complex generally violates the Atiyah-Bott obstruction. 

Complex differential forms of bi-degree $(0,k)$ over $\cz^n\cong\rz^{2n}$ are sections in the corresponding 
vector bundle denoted by $E_k$. Let $\dbar_k$ be the dbar-operator acting on $(0,k)$-forms and let 
 $$\Pi\xi=\sum_i \xi\Big(\frac{\partial}{\partial \overline{z}_i}\Big) d\overline{z}_i$$ 
%     \xi\in T^*_z\cz^n=T^*_z\rz^{2n},$$
denote the canonical projection in the $($complexified$)$ co-tangent bundle. 
The homogeneous principal symbol of $\dbar_k$ is given by
 $$\sigma_\psi^1(\dbar_k)(\xi)\omega=(\Pi\xi)\wedge\omega,\qquad 
     \xi\in T^*_z\rz^{2n},\quad\omega\in E_{k,z},$$
with $z\in\cz^n$. Now let $\Omega\subset\cz^n$ be a compact domain with smooth boundary and restrict $\dbar_k$ to 
$\Omega$. If $r:\Omega\to\rz$ is a boundary defining function for 
$\Omega$, the principal boundary symbol of $\dbar_k$ is $($up to scaling$)$ given by   
 $$\sigma_\partial^1(\dbar_k)(\xi^\prime)\eta=\Pi\xi^\prime\wedge\eta
     -i(\Pi dr)\wedge\frac{d\eta}{dr},$$
where $\xi^\prime\in T^*_{z^\prime}\partial \Omega$ and $\eta\in H^s(\rz_+)\otimes E_{k,z^\prime}$ 
with $z^\prime\in\partial \Omega$. 

For simplicity let us now take $n=2$ and let $\Omega=\{z\in\cz^2\mid |z|\le 1\}$ be the unit-ball in $\cz^2$. 
Using the generators $\dzbar_1$ and $\dzbar_2$, we shall identify 
$E^{0,0}$ and $E^{0,2}$ with $\cz^2\times\cz$ and $E^{0,1}$ with $\cz^2\times\cz^2$. 
As boundary defining function we take an $r$ with $r(z)=2(|z|-1)$ near $\partial\Omega$; then, on $\pO$,  
 $$\frac{\partial}{\partial r}=\frac{1}{2}\sum_{j=1,2} z_j\frac{\partial}{\partial z_j}
     +\overline{z}_j\frac{\partial}{\partial \overline{z}_j}.$$
Now we identify $T^*\partial\Omega$ with those co-vectors from $T^*\Omega|_{\pO}$ vanishing on $\partial/\partial r$. Hence, 
representing $\xi\in T^*_z \Omega$ as $\xi=\sum_{j=1,2} \xi_j\dzbar_j+\overline{\xi}_jdz_j$ we find 
 $$\xi\in T^*_z\partial \Omega\iff \re(\xi_1\overline{z}_1+\xi_2\overline{z}_2)=0.$$ 
In other words, we may identify $T^*\partial \Omega$ with 
 $$T^*\partial \Omega=\big\{(z,\xi)\in\cz^2\oplus\cz^2\mid |z|=1,\;\re \xi\cdot z=0\big\},$$
where $\xi\cdot z=(\xi,z)_{\cz^2}$ denotes the standard inner product of $\cz^2$; for the unit co-sphere bundle of 
$\partial \Omega$ we additionally require $|\xi|=1$. Note that for convenience we shall use notation $z$ and $\xi$ 
rather than $z^\prime$ and $\xi^\prime$ as above. 

Using all these identifications, the principal boundary symbols $\sigma_\partial^1(\dbar_0)$ and 
$\sigma_\partial^1(\dbar_1)$ can be identified with the operator-families  
 \begin{align*}
  d_0:H^{s+1}(\rz_+)\lra H^{s}(\rz_+,\cz^2),\qquad
  d_1:H^{s}(\rz_+,\cz^2)\lra H^{s-1}(\rz_+),
 \end{align*}
defined on $T^*\partial\Omega$ by
 \begin{align*}
  d_0(z,\xi)u&= (\xi_1u-iz_1u^\prime,\xi_2u-iz_2u^\prime)=\xi u-izu^\prime,\\  
  d_1(z,\xi)v&= \xi_2v_1-\xi_1v_2-i(z_2v^\prime_1-z_1v^\prime_2)=v\cdot\xi^\perp-iv^\prime\cdot z^\perp;  
 \end{align*}
here, $u^\prime=\frac{du}{dr}$ and similarly $v^\prime$ and $v_j^\prime$ denote derivatives with respect to the variable 
$r\in\rz_+$. Moreover, $c^\perp:=(\overline{c}_2,-\overline{c}_1)$ provided that $c=(c_1,c_2)\in\cz^2$. Note that 
$c^\perp\cdot d=-d^\perp\cdot c$ for every $c,d\in\cz^2$; in particular, $c\cdot c^\perp=0$. 

Therefore, the principal boundary symbol of the dbar-complex 
 $$\overline{\frakD}:\quad 0\lra H^{s+1}(\Omega,E^{0,0})
     \xrightarrow{\dbar_0}H^{s}(\Omega,E^{0,1})
     \xrightarrow{\dbar_1}H^{s-1}(\Omega,E^{0,2})\lra 0$$
corresponds to the family of complexes 
\begin{equation}\label{eq:compl}
 \sigma_\partial(\overline{\frakD}):\quad
 0\lra H^{s+1}(\rz_+)\xrightarrow{d_0} H^{s}(\rz_+,\cz^2)\xrightarrow{d_1}H^{s-1}(\rz_+)\lra0. 
\end{equation}
It is easily seen that $\overline{\frakD}$ is $\sigma_\psi$-elliptic. Hence the boundary symbols form a Fredholm family. 
We shall now determine explicitely the index element of $\overline{\frakD}$ and shall see that $\overline{\frakD}$ 
violates the Atiyah-Bott obstruction.   

\begin{proposition}
The complex \eqref{eq:compl} is exact for all $(z,\xi)\in S^*\partial \Omega$ with $z\not=i\xi$, while 
 $$\mathrm{dim}\,\mathrm{ker}\,d_0(i\xi,\xi)
     =\mathrm{dim}\,\frac{\mathrm{ker}\,d_1(i\xi,\xi)}{\mathrm{im}\,d_0(i\xi,\xi)}=1,\qquad 
     \mathrm{im}\,d_1(i\xi,\xi)=H^{s-1}(\rz_+).$$ 
In particular, $d_1$ is surjective in every point of $S^*\partial\Omega$. 
\end{proposition}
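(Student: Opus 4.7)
The strategy is to exploit the algebraic structure of $d_0,d_1$ by decomposing $v\in H^s(\rz_+,\cz^2)$ along $\xi$ and a second vector: either $z$ (when $\xi$ and $z$ are $\cz$-linearly independent) or an auxiliary $\eta$ with $\det(\eta,\xi)\neq 0$ in the degenerate case, where $\det(a,b):=a_1 b_2 - a_2 b_1$.

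I would first note the dichotomy forced by $|\xi|=|z|=1$ and $\re(\xi\cdot z)=0$: if $z=\lambda\xi$ with $\lambda\in\cz$, then $\xi\cdot z=\overline\lambda$, so $\re\lambda=0$ and $|\lambda|=1$ force $\lambda=\pm i$. Hence either $\xi,z$ are $\cz$-linearly independent, or $z=\pm i\xi$; only the subcase $z=i\xi$ will produce nontrivial cohomology.

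In the linearly independent case, $(u,w)\mapsto u\xi+wz$ is a topological isomorphism $H^s(\rz_+)\oplus H^s(\rz_+)\to H^s(\rz_+,\cz^2)$, and a short computation using bilinearity and alternation of $\det$ yields
\begin{align*}
 d_0 u &= u\xi - iu'z,\\
 d_1(u\xi+wz) &= \det(z,\xi)\,(w+iu').
\end{align*}
Thus $\mathrm{ker}\,d_0=0$ (since $u\xi-iu'z=0$ forces $u=u'=0$ by linear independence), while $\mathrm{ker}\,d_1=\{u\xi+wz:w+iu'=0,\,u,w\in H^s\}=\{(u,-iu'):u\in H^{s+1}(\rz_+)\}=\mathrm{im}\,d_0$, giving exactness in the middle. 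For surjectivity of $d_1$ one must solve $w+iu'=g$ in $H^s\oplus H^s$ for arbitrary $g\in H^{s-1}(\rz_+)$. I would extend $g$ to $\tilde g\in H^{s-1}(\rz)$, fix a smooth cutoff $\chi$ supported in $\{|\tau|\le 2\}$ with $\chi\equiv 1$ on $\{|\tau|\le 1\}$, and set $\hat{\tilde w}=\chi\hat{\tilde g}$, $\hat{\tilde u}=-(1-\chi)\hat{\tilde g}/\tau$; elementary weighted $L^2$ estimates show both lie in $H^s(\rz)$ with $\tilde w+i\tilde u'=\tilde g$, and restriction to $\rz_+$ provides the required $u,w$.

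In the degenerate subcases, direct substitution gives $d_0 u=(u\pm u')\xi$ and $d_1 v=\det(v\pm v',\xi)$, with the upper sign corresponding to $z=i\xi$ and the lower to $z=-i\xi$. Picking $\eta$ with $\det(\eta,\xi)\neq 0$ and writing $v=a\xi+b\eta$, everything reduces to the scalar first-order operators $\partial_r\pm 1$ on the half-line: $\partial_r-1:H^{s+1}(\rz_+)\to H^s(\rz_+)$ is a topological isomorphism, while $\partial_r+1:H^{s+1}(\rz_+)\to H^s(\rz_+)$ is surjective with one-dimensional kernel $\cz\cdot e^{-r}$. For $z=-i\xi$ this immediately yields exactness. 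For $z=i\xi$ one reads off $\mathrm{ker}\,d_0=\cz\cdot e^{-r}$, $\mathrm{im}\,d_0=\{a\xi:a\in H^s\}$, and $\mathrm{ker}\,d_1=\{a\xi+Ce^{-r}\eta:a\in H^s,C\in\cz\}$, giving $\mathrm{ker}\,d_1/\mathrm{im}\,d_0\cong\cz\cdot e^{-r}\eta$. In both degenerate subcases $d_1$ is surjective because $\det(\eta,\xi)(\partial_r\pm 1):H^s(\rz_+)\to H^{s-1}(\rz_+)$ is. The principal technical step is the Fourier construction in the linearly independent case; otherwise the proof reduces to elementary ODE analysis on $\rz_+$, where the asymmetry between $e^{-r}\in L^2(\rz_+)$ and $e^{r}\notin L^2(\rz_+)$ is precisely what distinguishes the two degenerate subcases.
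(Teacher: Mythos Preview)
Your proof is correct and follows the same overall strategy as the paper: split into the cases where $\xi,z$ are $\cz$-linearly independent versus $z=\pm i\xi$, and in each case reduce to the scalar operators $\partial_r\pm 1$ on the half-line, whose kernel/cokernel behaviour on $H^s(\rz_+)$ is dictated by whether $e^{\mp r}\in L^2(\rz_+)$.

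The differences are only in implementation. In the independent case you diagonalise by writing $v=u\xi+wz$, which makes the identity $\ker d_1=\im d_0$ immediate; the paper instead introduces the auxiliary scalar $w=i(z_2v_1-z_1v_2)$ and inverts a $2\times2$ system to recover $v$ in the form $(\xi w-izw')/\delta$. For surjectivity of $d_1$ in that case you use Fourier extension to $\rz$ and a frequency cutoff to split $g$ between $w$ and $iu'$, whereas the paper writes down an explicit preimage $v=(iz-\xi)L_-^{-1}f/\delta$ using that $L_-=1-\partial_r:H^s(\rz_+)\to H^{s-1}(\rz_+)$ is an isomorphism. Your basis decomposition is arguably cleaner algebraically; the paper's use of $L_\pm$ has the advantage of staying entirely on $\rz_+$ and connecting to the plus/minus-symbol structure of Boutet de Monvel's calculus. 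In the degenerate cases the two arguments coincide.
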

\begin{proof}
We will first study range and kernel of $d_0$: 
By definition, 
 $$\mathrm{im}\,d_0(z,\xi)=\Big\{\xi u-izu^\prime\mid u\in H^{s+1}(\rz_+)\Big\}.$$
Clearly $u$ belongs to the kernel of $d_0(z,\xi)$ if, and only if, 
 $$\begin{pmatrix}\xi_1 & -iz_1\\ \xi_2 & -iz_2\end{pmatrix}\begin{pmatrix}u\\ u^\prime\end{pmatrix}
     =\begin{pmatrix}0\\0\end{pmatrix}.$$
In case $\xi$ and $z$ are $($complex$)$ linearly independent, this simply means $u=0$. 

Otherwise there exists a constant $c\in\cz$ with $|c|=1$ such that $z=c\xi$. Then $0=\re z\cdot\xi=\re{c}$ 
shows that $c=\pm i$. In case $z=-i\xi$ we obtain 
 $$\xi_1(u-u^\prime)=\xi_2(u-u^\prime)=0.$$
Since $\xi\not=0$ it follows that $u$ is a multiple of $e^r$. Hence $u=0$ is the only solution in $H^s(\rz_+)$. 
Analogously, in case $z=i\xi$ we find that $u$ must be a multiple of $e^{-r}$, which is always an element of 
$H^s(\rz_+)$. In conclusion, for $(z,\xi)\in S^*\pO$, 
 $$\mathrm{ker}\,d_0(z,\xi)=
     \begin{cases}\mathrm{span}\{e^{-r}\}&\quad: z=i\xi,\\ \{0\}&\quad:  \mathrm{else}\end{cases}.$$
 
Let us next determine range and kernel of $d_1$: It will be useful to use the operators 
 $$L_-w=w-w^\prime,\qquad L_+w=w+w^\prime.$$
Note that $L_-:H^s(\rz_+)\to H^{s-1}(\rz_+)$ is an isomorphism $($recall that 
$L_\pm=\mathrm{op}_+(l_\pm)$ with symbol 
$l_\pm(\tau)=1\pm i\tau$ being so-called plus- and minus-symbols, respectively, 
that play an important role in Boutet de Monvel's calculus$)$. 
 
Let us consider the equation 
\begin{equation}\label{eq:1}
 d_1(z,\xi)v = \xi_2v_1-\xi_1v_2-i(z_2v^\prime_1-z_1v^\prime_2)=f.
\end{equation}
We consider three cases$:$
\begin{itemize}
\item[$($i$)$] Assume that $\xi$ and $z$ are linearly independent, hence $\delta:=i(z_1\xi_2-z_2\xi_1)\not=0$. 
 Let $f\in H^{s-1}(\rz_+)$ be given. If 
  $${v}:=(iz-\xi)L_-^{-1}f/\delta\in H^{s}(\rz_+,\cz^2),$$
 a direct computation shows that 
  $$d_1(z,\xi){v}=L_-^{-1}f-(L_-^{-1}f)^\prime=L_-L_-^{-1}f=f.$$
 Hence $d_1(z,\xi)$ is surjective. 

 Now let $f=0$ and set $w=i(z_2v_1-z_1v_2)$. Then, due to \eqref{eq:1},  
 $w^\prime=\xi_1v_2-\xi_2v_1$. In particular, $w,w^\prime\in H^s(\rz_+)$, i.e., $w\in H^{s+1}(\rz_+)$.  
 Moreover, 
  $$\begin{pmatrix}w \\ w^\prime\end{pmatrix}
      =\begin{pmatrix}-iz_2&iz_1\\ -\xi_2&\xi_1\end{pmatrix}\begin{pmatrix}v_1\\ v_2\end{pmatrix},$$
 which is eqivalent to 
  $$v=(v_1,v_2)=(\xi_1w-iz_1w^\prime,\xi_2w-iz_2w^\prime)/\delta=(\xi w-izw^\prime)/\delta,$$
 hence $\mathrm{ker}\,d_1(z,\xi)=\mathrm{im}\,d_0(z,\xi)$. 
\item[$($ii$)$] Consider the case $z=-i\xi$. Then, setting $w=\xi_2v_1-\xi_1v_2$, \eqref{eq:1} becomes 
 $L_-w=f$. Then, using that $|\xi|=1$, 
 \eqref{eq:1} is equivalent to 
  $$\xi_2(v_1-\overline{\xi}_2L^{-1}_-f)-\xi_1(v_2+\overline{\xi}_1L^{-1}_-f)=0.$$
 Since the orthogonal complement of the span of $\xi^\perp=(\overline{\xi}_2,-\overline{\xi}_1)$ is just the span of 
 $\xi$, we find that the solutions of \eqref{eq:1} are precisely those $v$ with 
  $$v=(v_1,v_2)=\Big(\xi_1\lambda+\overline{\xi}_2L^{-1}_-f,\xi_2\lambda-\overline{\xi}_1L^{-1}_-f\Big),\qquad 
      \lambda\in H^s(\rz_+). $$
 Since $\xi\lambda=\xi L_-u=\xi(u-u^\prime)=\xi u-izu^\prime$ for $\xi=iz$ with $u=L^{-1}_-\lambda$, we 
 conclude that $d_1(z,\xi)$ is surjective with $\mathrm{ker}\,d_1(z,\xi)=\mathrm{im}\,d_0(z,\xi)$.  
\item[$($iii$)$] It remains to consider the case $z=i\xi$. Similarly as before, setting $w=\xi_2v_1-\xi_1v_2$, 
 \eqref{eq:1} becomes $L_+w=w+w^\prime=f$. Note that the general solution is 
  $$w=c e^{-r}+w_f,\qquad w_f(r)=e^{-r}\int_0^r e^sf(s)\,ds=\int_0^r e^{-t}f(r-t)\,dt,$$
 where $f\mapsto w_f:H^{s-1}(\rz_+)\to H^{s}(\rz_+)$ is a continuous right-inverse of $L_+$. Then \eqref{eq:1} 
 is equivalent to 
  $$\xi_2\big(v_1-\overline{\xi}_2(ce^{-r}+w_f)\big)-\xi_1\big(v_2+\overline{\xi}_1(ce^{-r}+w_f)\big)=0$$
 we find that the solutions of \eqref{eq:1} are precisely those $v$ with 
  $$v=(v_1,v_2)=\Big(\xi_1\lambda+\overline{\xi}_2(ce^{-r}+w_f),
      \xi_2\lambda-\overline{\xi}_1(ce^{-r}+w_f)\Big),
      \qquad \lambda\in H^s(\rz_+). $$
 Since $L_+:H^{s+1}(\rz_+)\to H^s(\rz_+)$ surjectively, we can represent any $\xi\lambda$ as 
 $\xi L_+u=\xi (u+u^\prime)=\xi u-iz u^\prime$ and thus conclude that $d_1(z,\xi)$ is surjective with 
 \begin{align*}%\label{eq:3}
  \mathrm{ker}\,d_1(z,\xi)&=\mathrm{im}\,d_0(z,\xi)\oplus\mathrm{span}\{\xi^\perp e^{-r}\}.  
 \end{align*}
\end{itemize}
This finishes the proof of the proposition. 
\end{proof}

In the previous proposition, including its proof, 
we have seen that $d_1(z,\xi)$ is surjective for every $(z,\xi)\in S^*\pO$ with  
 $$\mathrm{ker}\,d_1(z,\xi)=
     \begin{cases}
       \mathrm{im}\,d_0(z,\xi)\oplus\mathrm{span}\{\xi^\perp e^{-r}\}&\quad: z=i\xi,\\ 
       \mathrm{im}\,d_0(z,\xi)&\quad:  \mathrm{else}
     \end{cases}.$$
Now let $\varphi\in\scrC^\infty(\rz)$ be a cut-off function with $\varphi\equiv1$ near $t=0$ and $\varphi(t)=0$ 
if $|t|\ge1/2$. Then define $\phi\in\scrC^\infty(S^*\pO)$ by 
 $$\phi(z,\xi)=\varphi(|\xi+iz|^2/2)=\varphi(1+iz\cdot\xi);$$
for the latter identity recall that $z\cdot\xi=\im\,z\cdot\xi$ for $(z,\xi)\in S^*\pO$. Obviously, $\phi$ is 
supported near the skew-diagonal $\{(i\xi,\xi)\mid |\xi|=1\}\subset S^*\pO$. 

\begin{lemma}
With the above notation define 
 $$v(z,\xi)=\phi(z,\xi)\xi^\perp e^{-ir/z\cdot\xi}\;\in\;\scrS(\rz_+,\cz^2),\qquad (z,\xi)\in S^*\pO$$
 $($recall that $r$ denotes the variable of $\rz_+)$. 
Then we have 
 $$\mathrm{ker}\,d_1(z,\xi)=\mathrm{im}\,d_0(z,\xi)+\mathrm{span}\{v(z,\xi)\}\qquad 
     \forall\;(z,\xi)\in S^*\pO.$$
\end{lemma}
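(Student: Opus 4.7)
\emph{Plan of proof.} The statement of the previous proposition already computes $\mathrm{ker}\,d_1(z,\xi)$ and $\mathrm{im}\,d_0(z,\xi)$ fiberwise, so the whole task is to verify that the specific Schwartz-valued section $v(z,\xi)$ supplies exactly the missing one-dimensional kernel element over the skew-diagonal $\{z=i\xi\}$, while being redundant (i.e., lying inside $\mathrm{im}\,d_0(z,\xi)$) everywhere else. I would split the argument into three cases according to $\phi$ and the relative position of $z$ and $\xi$.

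First, I would deal with the case $\phi(z,\xi)=0$. Here $v(z,\xi)=0$, so the claim reduces to $\mathrm{ker}\,d_1(z,\xi)=\mathrm{im}\,d_0(z,\xi)$. Since $\phi(z,\xi)=0$ forces $|\xi+iz|^2\ge 1$, in particular $z\neq i\xi$, this is exactly the content of the previous proposition.

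Next, at the skew-diagonal $z=i\xi$, a direct calculation gives $z\cdot\xi=i|\xi|^2=i$ and $\phi(i\xi,\xi)=\varphi(0)=1$, so $e^{-ir/z\cdot\xi}=e^{-r}$ and hence $v(i\xi,\xi)=\xi^\perp e^{-r}$. By the previous proposition this is precisely the generator of the missing one-dimensional complement of $\mathrm{im}\,d_0(i\xi,\xi)$ inside $\mathrm{ker}\,d_1(i\xi,\xi)$, so the claim follows there. The only technical point worth checking is that $v(z,\xi)\in\mathrm{ker}\,d_1(z,\xi)$ for every $(z,\xi)\in S^*\pO$; this reduces to the algebraic identity $|\xi|^2-(z\cdot\xi)^{-1}(z_1\overline{\xi_1}+z_2\overline{\xi_2})=0$, which holds since $|\xi|=1$.

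Finally, the main work is the remaining case $z\neq i\xi$ with $\phi(z,\xi)\neq 0$, where I must show $v(z,\xi)\in\mathrm{im}\,d_0(z,\xi)$. I would look for $u\in H^s(\rz_+)$ with $d_0(z,\xi)u=v(z,\xi)$ using the ansatz $u=\mu\, e^{-ir/z\cdot\xi}$ with a scalar $\mu$, which is the natural guess because $e^{-ir/z\cdot\xi}$ is an eigenfunction of $-i\partial_r$ with eigenvalue $1/z\cdot\xi$. Plugging this into $\xi u-izu'$ yields the system $\mu(\xi_j-z_j/(z\cdot\xi))=\phi(\xi^\perp)_j$ for $j=1,2$. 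Using $\re(z\cdot\xi)=0$ on $S^*\pO$, a short computation reduces both equations to the single relation $\mu\,\delta=-\phi(z\cdot\xi)$ with $\delta:=z_1\xi_2-z_2\xi_1$. On the support of $\phi$ we are near $z=i\xi$, and the condition $z\neq i\xi$ together with $|z|=|\xi|=1$ and $\re(z\cdot\xi)=0$ excludes $z=\pm i\xi$, hence $z$ and $\xi$ are linearly independent and $\delta\neq 0$, determining $\mu=-\phi(z\cdot\xi)/\delta$.

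The main obstacle is verifying that this $u$ actually belongs to $H^s(\rz_+)$. This reduces to showing that $\im(1/z\cdot\xi)<0$ on the support of $\phi$. Writing $z\cdot\xi=i\beta$ with $\beta\in\rz$ (since $\re(z\cdot\xi)=0$), a short computation gives $|\xi+iz|^2/2=1-\beta$, so $\phi(z,\xi)\neq 0$ forces $\beta$ close to $1$, in particular $\beta>0$; then $e^{-ir/z\cdot\xi}=e^{-r/\beta}$ is even Schwartz on $\rz_+$. Combining the three cases completes the proof.
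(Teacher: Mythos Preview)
Your proof is correct. It differs from the paper's only in how you handle the third case $z\neq i\xi$ with $\phi(z,\xi)\neq 0$: the paper simply computes $d_1(z,\xi)v(z,\xi)=0$ (using the identity $c^\perp\cdot d^\perp=d\cdot c$) and then invokes the previous proposition's conclusion $\mathrm{ker}\,d_1(z,\xi)=\mathrm{im}\,d_0(z,\xi)$ to deduce $v(z,\xi)\in\mathrm{im}\,d_0(z,\xi)$ without further work. You instead construct an explicit preimage $u=\mu\,e^{-ir/(z\cdot\xi)}$ and solve for $\mu$. This is more laborious but perfectly valid, and it has the minor advantage of exhibiting the preimage concretely; your decay check $\beta>0$ via $|\xi+iz|^2/2=1-\beta$ is exactly what is needed. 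Since you already verified $v\in\mathrm{ker}\,d_1$ globally in your second paragraph, you could have shortened the third case to a one-line appeal to the previous proposition, as the paper does.
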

\begin{proof}
Obviously, $v(i\xi,\xi)=\xi^\perp e^{-r}$. Moreover, for $z\not=i\xi$, 
 $$d_1(z,\xi)v(z,\xi)=v(z,\xi)\cdot\xi^\perp-i\frac{dv(z,\xi)}{dr}\cdot z^\perp=0,$$
using $c^\perp\cdot d^\perp=d\cdot c$. 
Hence $v(z,\xi)\in\mathrm{ker}\,d_1(z,\xi)=\mathrm{im}\,d_0(z,\xi)$. 
\end{proof}

If we now define 
 $$k_0\in\scrC^\infty\big(S^*\pO,\scrL(\cz,H^s(\rz_+,\cz^2))\big),\qquad 
     c\mapsto k_0(z,\xi)c:=cv(z,\xi)$$
then 
\begin{equation}\label{eq:compl2}
 0\lra 
 \begin{matrix}H^{s+1}(\rz_+)\\ \oplus\\ \cz\end{matrix}
 \xrightarrow{\wt{d}_0:=(d_0\;\;k_0)} H^{s}(\rz_+,\cz^2)\xrightarrow{\;d_1\;}H^{s-1}(\rz_+)\lra0
\end{equation}
is a family of complexes, which is exact in the second and third position. The index-element of  
$\overline{\frakD}$ is generated by the kernel-bundle of $\wt{d}_0$. 

\begin{lemma}\label{lem:basis}
The kernel-bundle of $\wt{d}_0$ is one-dimensional with 
 $$\mathrm{ker}\,\wt{d}_0(z,\xi)=
     \begin{cases}
       \mathrm{span}\{(e^{-r},0)\}&\quad: z=i\xi,\\ 
       \mathrm{span}\Big\{\Big(\phi(z,\xi)\frac{(z,\xi)}{(z,\xi^\perp)}e^{-ir/(z,\xi)},-1\Big)\Big\}&\quad:  \mathrm{else}
     \end{cases}.$$
\end{lemma}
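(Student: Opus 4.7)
The plan is to verify the explicit formula by a direct case analysis on $(z,\xi) \in S^*\pO$, splitting according to whether $z = i\xi$ (where $d_0(z,\xi)$ has a nontrivial kernel) and whether the cutoff $\phi(z,\xi)$ vanishes. The key structural fact underlying every case is that $\{\xi,\xi^\perp\}$ is an orthonormal basis of $\cz^2$ for the Hermitian inner product used in the paper (as one reads off the definition $c^\perp = (\overline{c}_2,-\overline{c}_1)$), so every $z \in \cz^2$ decomposes as $z = (z\cdot\xi)\xi + (z\cdot\xi^\perp)\xi^\perp$.

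First I would handle $z = i\xi$ directly: here $v(i\xi,\xi)=\xi^\perp e^{-r}$ and $d_0(i\xi,\xi)u = \xi u - i(i\xi)u' = \xi(u+u')$, so the equation $d_0 u + c\,v = 0$ is $\xi(u+u') + c\xi^\perp e^{-r} = 0$. Pairing with $\xi$ and $\xi^\perp$ separately splits this into $u+u' = 0$ and $c = 0$; the only solution of the ODE in $H^{s+1}(\rz_+)$ is $u = \lambda e^{-r}$, giving $\mathrm{ker}\,\wt{d}_0(i\xi,\xi) = \mathrm{span}\{(e^{-r},0)\}$.

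For $z \neq i\xi$ the earlier proposition supplies $\mathrm{ker}\,d_0(z,\xi) = 0$. If $\phi(z,\xi) = 0$ then $v = 0$, and $\mathrm{ker}\,\wt{d}_0 = \mathrm{ker}\,d_0 \oplus \cz = \mathrm{span}\{(0,-1)\}$, consistent with the stated formula because its first entry carries a factor of $\phi$. If $\phi(z,\xi) \neq 0$, then $(u,c) \in \mathrm{ker}\,\wt{d}_0$ is equivalent to $d_0 u = -cv$, and by injectivity of $d_0$ to $(u,c) = c\cdot(u_0, -1)$ once a single solution $u_0$ of $d_0 u_0 = v$ is produced. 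I would try the ansatz $u_0(r) = \lambda\, e^{-ir/(z\cdot\xi)}$ with an $r$-independent scalar $\lambda$; inserting into $\xi u - izu' = \phi\,\xi^\perp e^{-ir/(z\cdot\xi)}$ and using the orthogonal decomposition of $z$ above collapses the equation to a single linear relation that pins down $\lambda$, up to sign, as $\phi(z,\xi)(z\cdot\xi)/(z\cdot\xi^\perp)$.

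The hard part will be to verify that this $u_0$ genuinely lies in $H^{s+1}(\rz_+)$ and that $z\cdot\xi^\perp$ does not vanish on the relevant set. Since $(z,\xi)\in S^*\pO$ forces $z\cdot\xi$ to be purely imaginary, write $z\cdot\xi = i\alpha$ with $\alpha\in\rz$; then $|z|^2 = \alpha^2 + |z\cdot\xi^\perp|^2 = 1$, so $|z\cdot\xi^\perp|^2 = 1 - \alpha^2$. The support of $\phi$ lies near $z = i\xi$, where $\alpha = 1$; intersecting with $S^*\pO$ forces $\alpha\in(1/2,1]$ on the support. There $\alpha > 0$ makes $e^{-ir/(z\cdot\xi)} = e^{-r/\alpha}$ Schwartz in $r$, while $\alpha < 1$ (that is, $z\neq i\xi$) ensures $z\cdot\xi^\perp \neq 0$, so the generator is smooth and integrable off the diagonal. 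Together with the first two cases this establishes the claim pointwise, and smooth dependence of all ingredients on $(z,\xi)$ upgrades it to the one-dimensional smooth kernel subbundle of the lemma.
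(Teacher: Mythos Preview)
Your argument is correct and covers the same ground as the paper's, but through a somewhat different and more complete route. For $z\neq i\xi$ the paper produces the generator by exhibiting an explicit left-inverse $d_0(z,\xi)^{-1}v=(v,z^\perp)/(\xi,z^\perp)$ and then reading off $u=-c\,d_0^{-1}v$; you instead make the ansatz $u_0=\lambda\,e^{-ir/(z\cdot\xi)}$ and use the orthonormal splitting $z=(z\cdot\xi)\xi+(z\cdot\xi^\perp)\xi^\perp$ to solve for $\lambda$. The two computations are equivalent, but yours is slightly more elementary and, more importantly, supplies two verifications the paper leaves implicit: that $e^{-ir/(z\cdot\xi)}=e^{-r/\alpha}$ with $\alpha\in(1/2,1]$ on the support of $\phi$, so $u_0$ genuinely lies in $H^{s+1}(\rz_+)$; and that $|z\cdot\xi^\perp|^2=1-\alpha^2>0$ there, so the generator is well-defined. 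Your separate treatment of the subcase $\phi=0$ also cleanly handles points such as $z=-i\xi$, where the paper's left-inverse formula degenerates (since $(\xi,z^\perp)=0$ there). One small remark: you can drop the ``up to sign'' hedge---the computation pins down $\lambda=-\phi\,(z\cdot\xi)/(z\cdot\xi^\perp)$, so the kernel is spanned by $\big(-\phi\frac{(z,\xi)}{(z,\xi^\perp)}e^{-ir/(z,\xi)},-1\big)$; this differs from the displayed formula in the lemma by an overall sign in the first component, an inessential typo in the statement (the subsequent clutching-function argument only uses the underlying line and is unaffected).
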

\begin{proof}
In case $z=i\xi$, the ranges of $k_0$ and $a_0$ have trivial intersection; hence 
 $$\mathrm{ker}\,\wt{d}_0(i\xi,\xi)=\mathrm{ker}\,{d}_0(i\xi,\xi)\oplus\mathrm{ker}\,k_0(i\xi,\xi)= 
     \mathrm{span}\,\{e^{-r}\}\oplus\{0\}.$$
In case $z\not=\pm i\xi$, we find that $d_0(z,\xi)$ has the left-inverse 
 $$d_0(z,\xi)^{-1}v=\frac{1}{(\xi,z^\perp)}(v,z^\perp),$$
since if $v=d_0(z,\xi)u=\xi u-izu^\prime$ then $d_0(z,\xi)^{-1}v=u$ by simple computation. 
Thus 
 $$\wt{d}_0(z,\xi)\begin{pmatrix}u\\ c\end{pmatrix}=0\iff u=-c\,d_0(z,\xi)^{-1}v(z,\xi),$$
which immediately yields the claim. 
\end{proof}

\begin{proposition}
If $\pi:S^*\pO\to \Omega$ denotes the canonical projection, then 
 $$\mathrm{ind}_{S^*\pO}\sigma_\partial(\overline\frakD)\not\in\pi^*K(\pO),$$
i.e., the Atiyah-Bott obstruction does not vanish for $\frakD$. 
\end{proposition}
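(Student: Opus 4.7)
The plan is to show that when the index element of $\overline{\frakD}$ is restricted to a single fiber of $\pi\colon S^*\pO\to\pO$, it becomes a non-trivial class on that fiber.

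By Theorem \ref{thm:index-element} applied to the complex \eqref{eq:compl2} (where $\ell_1=1$ and $\ell_2=0$), the index element equals $[J_0]-[\cz]$, where $J_0=\ker\wt d_0$ is a line bundle on $S^*\pO$ and $[\cz]$ is trivial. Since $[\cz]$ lies in $\pi^*K(\pO)$, it suffices to prove $[J_0]\notin\pi^*K(\pO)$. For this I use the standard fact that any $\pi^*\beta$ restricts to a trivial bundle under any fiber inclusion $i_{z_0}\colon\pi^{-1}(z_0)\hookrightarrow S^*\pO$, because $\pi\circ i_{z_0}$ is constant. It therefore suffices to exhibit a single $z_0\in\pO$ for which the restriction $L:=J_0|_{\pi^{-1}(z_0)}$ has nonzero first Chern class on $\pi^{-1}(z_0)\cong S^2$.

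Choose $z_0=(1,0)\in\pO$, so the fiber is $\{\xi\in\cz^2:|\xi|=1,\;\re\xi_1=0\}$, the distinguished point of Lemma \ref{lem:basis} is $\xi_0:=-iz_0=(-i,0)$, and $\xi_2$ furnishes a holomorphic local coordinate near $\xi_0$. I detect the Chern class via an explicit section of $L^*$ with one zero: let $\lambda\colon L\to\cz$ be the bundle morphism that restricts to each fiber $L_\xi\subset H^{s+1}(\rz_+)\oplus\cz$ the projection onto the second summand. By Lemma \ref{lem:basis}, on $\pi^{-1}(z_0)\setminus\{\xi_0\}$ a generator of $L_\xi$ is $\tau(\xi)=(u(\xi),-1)$, so $\lambda(\tau)\equiv -1\ne 0$; at $\xi_0$ the fiber is $\mathrm{span}\{(e^{-r},0)\}$ and $\lambda$ vanishes. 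Thus $\lambda$ is a section of $L^*$ with a single isolated zero at $\xi_0$. To compute its order, I multiply $\tau$ by $(z_0,\xi^\perp)=\xi_2$ to obtain
\[
\sigma(\xi):=\xi_2\,\tau(\xi)=\bigl(\phi(z_0,\xi)\,(z_0,\xi)\,e^{-ir/(z_0,\xi)},\,-\xi_2\bigr),
\]
which is continuous near $\xi_0$ and nonvanishing there: at $\xi_0$ one has $\phi(z_0,\xi_0)=\varphi(0)=1$ and $(z_0,\xi_0)=i$, so $\sigma(\xi_0)=(ie^{-r},0)\in L_{\xi_0}\setminus\{0\}$. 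In the local trivialization of $L$ by $\sigma$ the section $\lambda$ reads $c\sigma(\xi)\mapsto -c\xi_2$, a simple zero at $\xi_2=0$. Consequently $c_1(L^*)=1$, so $c_1(L)=-1\ne 0$, and the Atiyah-Bott obstruction is nonzero.

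The main delicacy is verifying that $\tau$ and $\sigma$ are genuine continuous nonvanishing sections of the Hilbert-subspace bundle $L\subset H^{s+1}(\rz_+)\oplus\cz$ on their respective domains, and that $\xi_2$ is an orientation-compatible holomorphic chart at $\xi_0$; granted these routine checks, the simple zero observed in the trivialization by $\sigma$ gives $|c_1(L)|=1$ and finishes the proof.
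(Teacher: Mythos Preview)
Your argument is correct and follows essentially the same strategy as the paper: restrict the kernel bundle $J_0=\ker\wt d_0$ to the fibre $S^*_{z_0}\pO\cong S^2$ over $z_0=(1,0)$ and show the resulting line bundle is non-trivial, hence not the restriction of a pull-back from $\pO$.

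The only difference is in the final bookkeeping on $S^2$. The paper splits $S^2=S_+\cup S_-$, writes down nonvanishing sections $s_+(\xi)=(0,1)$ on $S_+$ and $s_-(\xi)=(-\phi(z_0,\xi)\overline{\xi}_1 e^{-ir/\overline{\xi}_1},\xi_2)$ on $S_-$, and reads off the clutching function $\xi_2$ along the equator, identifying $E_0$ with the Bott generator. You instead detect non-triviality via the first Chern class, using the global section $\lambda$ of $L^*$ given by projection to the $\cz$-summand and checking it has a single simple zero at $\xi_0$ in the local frame $\sigma$. These are dual viewpoints: your $\sigma$ is exactly $-s_-$, and your $\lambda$ is the linear functional dual to $s_+$. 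The clutching description has the minor advantage of naming the bundle explicitly (the Bott generator), while your Chern-class computation is perhaps more self-contained since it avoids any appeal to the classification of line bundles via clutching. Either way the conclusion $|c_1(L)|=1$ is the same, and your acknowledged orientation ambiguity is harmless for the purpose at hand.
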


In order to show this result we need to verify that the kernel-bundle $E:=\mathrm{ker}\,\wt{d}_0$  is not stably 
isomorphic to the pull-back under $\pi$ of a bundle on $\pO=S^3$. Since vector bundles on the 3-sphere are always 
stably trivial, we only have to show that $E$ is not stably trivial. 

To this end let $z_0=(1,0)\in \pO$ be fixed and let $E_0$ denote the restriction of $E$ to 
 $$S^*_{z_0}\pO=\{\xi\in\cz^2\mid (z_0,\xi)\in S^*D\}=\{\xi\in\cz^2\mid |\xi|=1,\;\re\xi_1=0\}\cong S^2.$$
We shall verify that $E_0$ is isomorphic to the Bott generator bundle on $S^2$, hence is not stably trivial; 
consequently, also $E$ cannot be.    

In fact, write $S^*_{z_0}\pO$ as the union $S_+\cup S_-$ of the upper and lower semi-sphere, 
$S_\pm=\{\xi\in S^*_{z_0}\pO\mid 0\le\pm\im\,\xi_1\le1\}$. Specializing the above Lemma \ref{lem:basis} to the case 
$z=z_0$, and noting that then $z\cdot\xi=\overline{\xi}_1$ and $z\cdot\xi^\perp=\xi_2$, we find that 
\begin{align*}
 s_+(\xi)&=(0,1),\qquad \xi\in S_+,\\
 s_-(\xi)&=\Big(-\phi(z_0,\xi)\overline{\xi}_1 e^{-ir/\overline{\xi}_1},\xi_2\Big),\qquad \xi\in S_-,
\end{align*}
define two non-vanishing sections of $E_0$ over $S_+$ and $S_-$, respectively. Note that $s_-(\xi)=(0,\xi_2)$ 
near the equator $\{\xi=(0,\xi_2)\mid |\xi_2|=1\}\cong S^1$. In other words, the bundle $E_0$ is obtained 
by clutching together the trivial one-dimensional bundles over $S_+$ and $S_-$, respectively, via the clutching function 
$f:S^1\to\cz\setminus\{0\},\,f(\xi_2)=\xi_2$. Thus $E_0$ coincides with the Bott generator.

%%%%%%%%%%%%%%%%%%%%%%%%%%%%%%%%%%%%%%%%%%%%%%%%%%
%%%%%%%%%%%%%%%%%%%%%%%%%%%%%%%%%%%%%%%%%%%%%%%%%%
\section{Generalized pseudodifferential operator algebras}\label{sec:04}

The aim of this section is to introduce an abstract framework in which 
principal facts and techniques known from the theory of pseudodifferential operators $($on manifolds with and 
without boundary and also on manifolds with singularities$)$ can be formalized. 
We begin with two examples to motivate this formalization: 

\begin{example}\label{ex:algebra-ex1}
Let $M$ be a smooth closed Riemannian manifold. We denote by $G$ the set of all $g=(M,F)$, 
where $F$ is a smooth Hermitian vector bundle over $M$. Let   
 $$H(g):= L^2(M,F), \qquad g=(M,F),$$
be the Hilbert space of square integrable sections of $F$. 
If $\frakg=(g^0,g^1)$ with $g^j=(M,F_j)$ we let 
 $$L^\mu(\frakg):=L^{\mu}_\cl(M;F_0,F_1),\qquad \mu\le0,$$ 
denote the space of classical pseudodifferential operators of order $\mu$ acting from $L_2$-sections of $F_0$ to 
$L_2$-sections of $F_1$. Note that there is a natural identification  
 $$L^{\mu}_\cl(M;F_0\oplus F_1,F^\prime_0\oplus F^\prime_1) 
     =\left\{
     \begin{pmatrix}A_{00}&A_{01}\\A_{10}&A_{11}\end{pmatrix}
     \;\Bigg\vert\; A_{ij}\in L^\mu(M;F_j,F_i^\prime)\right\}.$$    
With $\pi:S^*M\to M$ being the canonical projection of the co-sphere bundle 
to the base, we let 
 $$E(g):=\pi^*F, \qquad g=(M,F),$$
and then for $A\in L^0(\frakg)$, $\frakg=(g^0,g^1)$, the usual principal symbol is a map 
 $$\sigma_\psi^0(A):E(g^0)\lra E(g^1);$$
it vanishes for operators of negative order. Obviously we can compose operators $($only$)$ if the bundles they 
act in fit together and, in this case, the principal symbol behaves multiplicatively. Taking the $L_2$-adjoint 
induces a map $L^{\mu}_\cl(M;F_0,F_1)\to L^{\mu}_\cl(M;F_1,F_0)$ well-behaved with the principal symbol, i.e., 
$\sigma_\psi^0(A^*)=\sigma_\psi^0(A)^*$, where the $*$ on the right indicates the adjoint morphism 
$($obtained by passing fibrewise to the adjoint$)$. 
\end{example}

\begin{example}\label{ex:algebra-ex2}
Considerations analogous to that of Example $\mathrm{\ref{ex:algebra-ex1}}$ apply to Boutet de Monvel's 
algebra for manifolds with smooth boundary. Here the weights are $g=(\Omega,E,F)$, where $E$ and $F$ are 
Hermitian bundles over $\Omega$ and $\pO$, respectively, and 
\begin{align*}
 L^\mu(\frakg)&=\calB^{\mu,0}(\Omega;(E_0,F_0),(E_1,F_1)),\qquad 0\le -\mu\in\gz,\\
 H(g)&=L^2(\Omega,E)\oplus L^2(\pO,F),
\end{align*}
for $g=(\Omega,E,F)$ and $\frakg=(g_0,g_1)$ with $g_j=(\Omega,E_j,F_j)$. The principal symbol has two 
components, $\sigma^\mu(\calA)=(\sigma^\mu_\psi(\calA),\sigma^\mu_\partial(\calA))$. 
\end{example}

%%%%%%%%%%%%%%%%%%%%%%%%%%%%%%%%%%%%%%%%%%%%%%%%%%
\subsection{The general setup}\label{sec:04.1} 

Let $G$ be a set; the elements of $G$ we 
will refer to as \emph{weights}. With every weight $g\in G$ there is associated a Hilbert space $H(g)$. 
There is a weight such that $\{0\}$ is the associated Hilbert space. 
With any $\frakg=(g^0,g^1)\in G\times G$ there belong vector-spaces of operators 
 $$L^{-\infty}(\frakg)\subset L^0(\frakg)\subset\scrL(H(g^0),H(g^1));$$
$0$ and $-\infty$ we shall refer to as the \emph{order} of the operators, those of order $-\infty$ we shall also call 
\emph{smoothing operators}. We shall assume that smoothing operators induce compact operators in the 
corresponding Hilbert spaces and that the identity operator is an element of $L^0(\frakg)$ for any pair 
$\frakg=(g,g)$. 

\begin{remark}
Let us point out that in this abstract setup the operators have order at most $0$. 
This originates from the fact that in 
applications the use of order-reductions often allows to reduce general order situations to the zero order case 
$($see for example the corresponding reduction in the proof of Theorem $\mathrm{\ref{thm:complex-main}})$.  
\end{remark}

Two pairs $\frakg_0$ and $\frakg_1$ are called \emph{composable} if $\frakg_0=(g^0,g^1)$ and 
$\frakg_1=(g^1,g^2)$, and in this case we define 
 $$\frakg_1\circ\frakg_0=(g^0,g^2).$$
We then request that composition of operators induces maps 
 $$L^\mu(\frakg_1)\times L^\nu(\frakg_0)\lra L^{\mu+\nu}(\frakg_1\circ\frakg_0),\qquad \mu,\nu\in\{-\infty,0\},$$
whenever the involved pairs of weights are composable. 

\begin{definition}
With the previously introduced notation let  
 $$L^{\bullet}=\mathop{\mbox{\LARGE$\cup$}}_{\frakg\in G\times G} L^0(\frakg).$$
By abuse of language, we shall speak of the \emph{algebra} $L^\bullet$.  
\end{definition}

For a pair of weights $\frakg=(g^0,g^1)$ its \emph{inverse pair} is defined as $\frakg^{(-1)}=(g^1,g^0)$. We 
shall assume that $L^\bullet$ is \emph{closed under taking adjoints}, i.e., if $A\in L^\mu(\frakg)$ then the 
adjoint of $A:H(g^0)\to H(g^1)$ is realized by an operator $A^*\in L^\mu(\frakg^{(-1)})$. 

\begin{definition}
Let $A\in L^0(\frakg)$. Then $B\in L^0(\frakg^{(-1)})$ is called a \emph{parametrix} of $A$ if 
$AB-1\in L^{-\infty}(\frakg\circ\frakg^{(-1)})$ and $BA-1\in L^{-\infty}(\frakg^{(-1)}\circ\frakg)$. 
\end{definition}

In other words, a parametrix is an inverse modulo smoothing operators. 

%%%%%%%%%%%%%%%%%%%%%%%%%%%%%%%%%%%%%%%%%%%%%%%%%%
\subsubsection{The Fredholm property}\label{sec:04.1.0}

It is clear that if $A\in L^0(\frakg)$ has a parametrix then $A$ induces a Fredholm operator in the corresponding 
Hilbert spaces. 

\begin{definition}
We say that $L^\bullet$ has the \emph{Fredholm property} if, for every $A\in L^0(\frakg)$, $\frakg=(g^0,g^1)$, 
the following holds true$:$ 
 $$\text{$A$ has a parametrix }\iff \text{$A:H(g^0)\to H(g^1)$ is a Fredholm operator}.$$  
\end{definition}

It is well-known that Boutet de Monvel's algebra has the Fredholm property, see Theorem 7 in Section 3.1.1.1 of 
\cite{ReSc}, for example.  

%%%%%%%%%%%%%%%%%%%%%%%%%%%%%%%%%%%%%%%%%%%%%%%%%%
\subsubsection{The block-matrix property}\label{sec:04.1.1}

We shall say that the algebra $L^\bullet$ has the \emph{block-matrix property} if there exists a map 
 $$(g^0,g^1)\mapsto g^0\oplus g^1:G\times G\to G$$
which is associative, i.e., 
$(g^0\oplus g^1)\oplus g^2=g^0\oplus (g^1\oplus g^2)$, such that 
 $$H(g^0\oplus g^1)=H(g^0)\oplus H(g^1), \qquad (g^0,g^1)\in G\times G,$$
and such that 
 $$L^\mu(\frakg),\qquad \frakg=(g_0^0\oplus\ldots\oplus g_{\ell}^0,g_0^1\oplus\ldots\oplus g_{k}^1),$$
can be identified with the space of $(k+1)\times(\ell+1)$-matrices  
\begin{equation*}%\label{eq:matrix}
 \begin{pmatrix}A_{00}& \cdots & A_{0\ell}\\ \vdots & & \vdots \\A_{k0}&\cdots& A_{k\ell}\end{pmatrix}
     \colon
     \begin{matrix}H(g^0_0)\\ \oplus\\ \vdots \\ \oplus \\ H(g^0_\ell)\end{matrix}
     \lra
     \begin{matrix}H(g^0_1)\\ \oplus\\ \vdots \\ \oplus \\ H(g^1_k)\end{matrix},
     \qquad A_{ij}\in L^\mu((g_j^0,g_i^1)).
\end{equation*}

%%%%%%%%%%%%%%%%%%%%%%%%%%%%%%%%%%%%%%%%%%%%%%%%%%
\subsubsection{Classical algebras and principal symbol map}\label{sec:04.1.2}

An algebra $L^\bullet$ will be called \emph{classical}, and then for clarity denoted by $L^\bullet_\cl$, 
if there exists a map, called \emph{principal symbol} map, 
 $$A\mapsto\sigma(A)=\big(\sigma_1(A),\ldots,\sigma_n(A)\big)$$
assigning to each $A\in L^0_\cl(\frakg)$, $\frakg=(g^0,g^1)$,  an $n$-tuple of bundle morphisms 
 $$\sigma_\ell(A):\;E_\ell(g^0)\lra E_\ell(g^1)$$
between $($finite or infinite dimensional$)$ Hilbert space bundles $E_\ell(g^j)$ over some base $B_\ell(g^j)$, 
such that the following properties are valid:  
\begin{itemize}
 \item[$(i)$] The map is linear, i.e., 
   $$\sigma(A+B)=\sigma(A)+\sigma(B)
     :=\big(\sigma_1(A)+\sigma_1(B),\ldots,\sigma_n(A)+\sigma_n(B)\big)$$
  whenever $A,B\in L^0(\frakg)$. 
 \item[$(ii)$] The map respects the composition of operators, i.e., 
   $$\sigma(BA)=\sigma(B)\sigma(A)
     :=\big(\sigma_1(B)\sigma_1(A),\ldots,\sigma_n(B)\sigma_n(A)\big)$$
  whenever $A\in L^0(\frakg_0)$ and $B\in L^0(\frakg_1)$ with composable pairs $\frakg_0$ and $\frakg_1$. 
 \item[$(iii)$] The map is well-behaved with the adjoint, i.e., for any $\ell$, 
   $$\sigma_\ell(A^*)=\sigma_\ell(A)^*:\;E_1^\ell(g_1)\lra E_0^\ell(g_0),$$  
  where $\sigma_\ell(A)^*$ denotes the adjoint morphism $($obtained by taking fibrewise the adjoint$)$; 
  for brevity, we shall also write $\sigma(A^*)=\sigma(A)^*$.  
 \item[$(iv)$] $\sigma(R)=(0,\ldots,0)$ for every smoothing operator $R$. 
\end{itemize}

\begin{definition}
$A\in L^0_\cl(\frakg)$ is called \emph{elliptic} if its principal symbol $\sigma(A)$ is invertible, i.e., 
all bundle morphisms $\sigma_1(A),\ldots,\sigma_n(A)$ are isomorphisms. 
\end{definition}  

Besides the above properties $(i)-(iv)$ we shall assume 
\begin{itemize}
 \item[$(v)$] $A\in L^0_\cl(\frakg)$ is elliptic if, and only if, it has a parametrix $B\in L^0_\cl(\frakg^{(-1)})$. 
\end{itemize}

Finally, in case $L^\bullet_\cl$ has the block-matrix property, we shall also assume that the identification with 
block-matrices from Section \ref{sec:02.2} has an analogue on the level of principal symbols. 

%%%%%%%%%%%%%%%%%%%%%%%%%%%%%%%%%%%%%%%%%%%%%%%%%%
\subsection{Operators of Toeplitz type}\label{sec:04.2}

In the following let $\frakg=(g^0,g^1)$ and $\frakg_j=(g^j,g^j)$ for $j=0,1$. Let $P_j\in L^0(\frakg_j)$ be 
projections, i.e., $P_j^2=P_j$. We then define, 
for $\mu=0$ or $\mu=-\infty$,  
\begin{align*}
 T^\mu(\frakg;P_0,P_1):=&\Big\{A\in L^\mu(\frakg)\mid (1-P_1)A=0,\;A(1-P_0)=0\Big\}\\
 =&\Big\{P_1{A^\prime}P_0 \mid {A^\prime}\in L^\mu(\frakg)\Big\}. 
\end{align*}
If we set 
\begin{equation*}%\label{eq:subspace}
 H(g_j,P_j):=\mathrm{im}\,P_j=P_j\big(H(g_j)\big), 
\end{equation*}
then $H(g_j,P_j)$ is a closed subspace of $H(g_j)$ and we have the inclusions  
 $$ T^{-\infty}(\frakg,P_0,P_1)\subset T^0(\frakg,P_0,P_1)\subset \scrL\big(H(g_0,P_0),H(g_1,P_1)\big).$$
Clearly, smoothing operators are not only bounded but again compact. 

The union of all these spaces (i.e., involving all weights and projections) we shall denote by $T^\bullet$. 
We shall call $T^\bullet$ a \emph{Toeplitz algebra} and refer to elements of $T^\bullet$ as 
\emph{Toeplitz type operators}.    

\begin{definition}\label{def:param-t}
Let $A\in T^0(\frakg;P_0,P_1)$. Then $B\in T^0(\frakg^{(-1)};P_1,P_0)$ is called a \emph{parametrix} of $A$ if 
 $$AB-P_1\in T^{-\infty}(\frakg\circ\frakg^{(-1)};P_1,P_1), \qquad 
     BA-P_0\in T^{-\infty}(\frakg^{(-1)}\circ\frakg;P_0,P_0).$$  
\end{definition}

%%%%%%%%%%%%%%%%%%%%%%%%%%%%%%%%%%%%%%%%%%%%%%%%%%
\subsubsection{Classical operators and principal symbol}\label{sec:04.2.1}

The previous definitions extend, in an obvious way, to classical algebras; again we shall use the subscript $\cl$ to 
indicate this, i.e., we write $T^\bullet_\cl$. We associate with $A\in T^0_\cl(\frakg;P_0,P_1)$ a principal symbol in 
the following way: Since the $P_j$ are projections, the associated symbols $\sigma_\ell(P_j)$ are projections in the 
bundles $E_\ell(g^j)$ and thus define subbundles 
 $$E_\ell (g^j,P_j):=\mathrm{im}\,\sigma_\ell(P_j)=\sigma_\ell(P_j)(E_\ell(g^j)).$$
For $A\in T^0_\cl(\frakg;P_0,P_1)$ we then define 
 $$\sigma(A;P_0,P_1)=\big(\sigma_1(A;P_0,P_1),\ldots,\sigma_n(A;P_0,P_1)\big)$$
by 
\begin{equation}\label{eq:symb-t}
 \sigma_\ell(A;P_0,P_1)=\sigma_\ell(A):\;E_\ell (g^0,P_0)\lra E_\ell (g^1,P_1);
\end{equation}
note that $\sigma_\ell(A)$ maps into $E_\ell (g^1,P_1)$ in view of the fact that $(1-P_1)A=0$. 

\begin{remark}
The principal symbol map defined this way satisfies the obvious analogues of properties $(i)$, $(ii)$, and $(iv)$ from 
Section $\mathrm{\ref{sec:04.1.2}}$. Concerning property $(iii)$ of the adjoint, observe that there is a natural identification 
of the dual of $H(g,P)$ with the space $H(g,P^*)$. This leads to maps 
 $$A\mapsto A^*\colon T^\mu_{(\cl)}(\frakg;P_0,P_1)\lra T^\mu_{(\cl)}(\frakg^{(-1)};P_1^*,P_0^*),$$
and $(iii)$ generalizes correspondingly.  
\end{remark}

\begin{definition}\label{def:ell-t}
An operator $A\in T^0_\cl(\frakg;P_0,P_1)$ is called \emph{elliptic} if its principal symbol $\sigma(A;P_0,P_1)$ 
is invertible, i.e., all bundle morphisms $\sigma_1(A;P_0,P_1),\ldots,$ $\sigma_n(A;P_0,P_1)$ from 
\eqref{eq:symb-t} are isomorphisms. 
\end{definition}  

Property $(v)$ from Section $\mathrm{\ref{sec:04.1.2}}$, whose validity was a mere assumption for the algebra 
$L^\bullet_\cl$, can be shown to remain true for the Toepltz algebra $T^\bullet_\cl$, see 
Theorem 3.12 of \cite{Seil}: 

\begin{theorem}\label{thm:seil2}
For $A\in T^0_\cl(\frakg;P_0,P_1)$ the following properties are equivalent$:$ 
\begin{itemize}
 \item[a$)$] $A$ is elliptic $($in the sense of Definition $\mathrm{\ref{def:ell-t}})$, 
 \item[b$)$] $A$ has a parametrix $($in the sense of Definition $\mathrm{\ref{def:param-t}})$. 
\end{itemize} 
%In other words, property $(v)$ from Section $\mathrm{\ref{sec:04.1.2}}$ 
%remains to hold true for the Toepltz algebra $T^\bullet_\cl$. 
\end{theorem}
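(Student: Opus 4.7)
For the direction b) $\Rightarrow$ a), I would simply apply the principal symbol map $\sigma$ to the two parametrix identities. Since $\sigma$ is multiplicative (property $(ii)$) and vanishes on smoothing operators (property $(iv)$), this produces $\sigma_\ell(B; P_1, P_0) \circ \sigma_\ell(A; P_0, P_1) = \mathrm{id}_{E_\ell(g^0, P_0)}$ and the analogous right-inverse identity on $E_\ell(g^1, P_1)$, so each $\sigma_\ell(A; P_0, P_1)$ is an isomorphism. The interesting direction is a) $\Rightarrow$ b).

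Here my plan is to upgrade $A$ to an ambient-elliptic operator and exploit property $(v)$ of $L^\bullet_\cl$. Concretely I would form
$$C := A^*A + (1-P_0)^*(1-P_0)\;\in\; L^0_\cl(\frakg_0),\qquad \frakg_0 = (g^0, g^0),$$
and first verify that $C$ is elliptic in the ambient classical algebra. Since
$$\sigma_\ell(C) = \sigma_\ell(A)^*\sigma_\ell(A) + (1-\sigma_\ell(P_0))^*(1-\sigma_\ell(P_0))$$
is a sum of two nonnegative self-adjoint morphisms of $E_\ell(g^0)$, any vector $v$ in its kernel must satisfy $\sigma_\ell(A)v = 0$ and $(1-\sigma_\ell(P_0))v = 0$; the second forces $v \in E_\ell(g^0, P_0)$, and combined with the first and the assumed invertibility of $\sigma(A; P_0, P_1)$ this forces $v=0$. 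Self-adjointness upgrades injectivity to bijectivity. Property $(v)$ then yields $F \in L^0_\cl(\frakg_0)$ with $FC - 1 \in L^{-\infty}$, and I would set
$$B := P_0 F A^* P_1 \;\in\; T^0_\cl(\frakg^{(-1)}; P_1, P_0),$$
the Toeplitz membership being visible from $(1-P_0)B = 0$ and $B(1-P_1)=0$. Using $AP_0 = A$, $P_1 A = A$, and $(1-P_0)P_0 = 0$, right-multiplication of $FC - 1 \equiv 0$ by $P_0$ collapses to $FA^*A \equiv P_0$ modulo $L^{-\infty}$, so
$$BA = P_0 F A^* (P_1 A) = P_0 F A^* A = P_0(FA^*A - P_0) + P_0 \equiv P_0 \pmod{L^{-\infty}};$$
one checks directly that the remainder annihilates $1-P_0$ on both sides, so it lies in $T^{-\infty}(\frakg^{(-1)}\circ\frakg; P_0, P_0)$.

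To obtain a right parametrix I would either apply the same construction to the elliptic element $A^* \in T^0_\cl(\frakg^{(-1)}; P_1^*, P_0^*)$ and then take adjoints, or work directly with $\wt{C} := AA^* + (1-P_1)(1-P_1)^*$ to produce some $B' \in T^0_\cl(\frakg^{(-1)}; P_1, P_0)$ with $AB' - P_1 \in T^{-\infty}$. The standard identity chain $B \equiv B(AB') \equiv (BA)B' \equiv P_0 B' \equiv B'$ modulo $T^{-\infty}$ then shows that $B$ is simultaneously a left and right Toeplitz parametrix. The main obstacle, and the source of the slightly nonobvious form of $C$, is that $P_0$ need not be self-adjoint, so $E_\ell(g^0, P_0)$ is not the orthogonal complement of $E_\ell(g^0, 1-P_0)$ in $E_\ell(g^0)$; this is exactly why the symmetric regularizer $(1-P_0)^*(1-P_0)$ is required, rather than the naive $1-P_0$, to guarantee invertibility of $\sigma(C)$ on the full bundle. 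Beyond this, everything is bookkeeping with the kernel conditions defining $T^\bullet_\cl$.
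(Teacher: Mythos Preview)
The paper does not include a proof of this statement; it is quoted from \cite{Seil}, Theorem~3.12. Your argument is correct and is essentially the standard one. Two small points worth making explicit.

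First, when you say ``self-adjointness upgrades injectivity to bijectivity'' for $\sigma_\ell(C)$, this uses more than self-adjointness, since the fibres $E_\ell(g^0)$ may be infinite-dimensional. What you actually have is strict positivity: writing $v = \sigma_\ell(P_0)v + (1-\sigma_\ell(P_0))v =: v_1 + v_2$, one gets
\[
\langle \sigma_\ell(C)v, v\rangle = \|\sigma_\ell(A)v_1\|^{2} + \|v_2\|^{2}\ \ge\ c\,\|v_1\|^{2} + \|v_2\|^{2},
\]
the lower bound on the first term coming from the assumed invertibility of $\sigma_\ell(A;P_0,P_1)$. Since the direct-sum decomposition $E_\ell(g^0)=E_\ell(g^0,P_0)\oplus\ker\sigma_\ell(P_0)$ has bounded projections, this yields $\sigma_\ell(C)\ge c'\,\mathrm{id}$, hence invertibility.

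Second, for the right parametrix your Option~1 (pass to $A^*$) is the cleaner route, but it requires checking that ellipticity of $A$ implies ellipticity of $A^*$ in $T^0_\cl(\frakg^{(-1)};P_1^*,P_0^*)$. This is not entirely automatic because $E_\ell(g^j,P_j^*)\neq E_\ell(g^j,P_j)$ in general; it follows from the duality identifying $E_\ell(g^j,P_j^*)$ with the dual of $E_\ell(g^j,P_j)$, under which $\sigma_\ell(A^*;P_1^*,P_0^*)$ becomes the Banach-space adjoint of the isomorphism $\sigma_\ell(A;P_0,P_1)$. With these two points filled in, your proof is complete.
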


Similarly, the Fredholm property in $L^\bullet$ is inherited by the respective Toeplitz algebra $T^\bullet$, as 
has been shown in Theorem 3.7 of \cite{Seil}$:$

\begin{theorem}\label{thm:seil1}
Let $L^\bullet$ have the Fredholm property. 
For $A\in T^0(\frakg;P_0,P_1)$ the following properties are equivalent$:$ 
\begin{itemize}
 \item[a$)$] $A$ has a parametrix $($in the sense of Definition $\mathrm{\ref{def:param-t}})$. 
 \item[b$)$] $A:H(g^0,P_0)\to H(g^1,P_1)$ is a Fredholm operator. 
\end{itemize}
\end{theorem}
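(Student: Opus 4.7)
Direction a) $\Rightarrow$ b) is straightforward: if $B\in T^0(\frakg^{(-1)};P_1,P_0)$ is a parametrix of $A$, then $AB-P_1$ and $BA-P_0$ are smoothing Toeplitz operators, hence compact on $H(g^1,P_1)$ and $H(g^0,P_0)$, and since $P_j$ restricts to the identity on $H(g^j,P_j)$, $A$ admits two-sided bounded inverses modulo compacts---hence is Fredholm.

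For the substantive direction b) $\Rightarrow$ a), my plan is a sandwich construction: extend $A$ to a Fredholm operator in $L^\bullet$, invoke its Fredholm property to obtain a parametrix there, and squeeze the result back with the projections. I would treat first the aligned case $g^0=g^1=g$, $P_0=P_1=:P$, and set $\wt A := A + (1-P) \in L^0((g,g))$. From $A=PAP$ and the topological decomposition $H(g)=\mathrm{range}(P)\oplus\ker(P)$ one checks that $\wt A$ is block-diagonal with blocks $A|_{H(g,P)}$ (Fredholm by hypothesis) and $\mathrm{id}_{\ker P}$, hence Fredholm on $H(g)$. The Fredholm property of $L^\bullet$ then furnishes $\wt B\in L^0((g,g))$ with $\wt A\wt B-1=:R_1$ and $\wt B\wt A-1=:R_2$ in $L^{-\infty}$. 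Setting $B:=P\wt B P\in T^0((g,g);P,P)$ and using the identities $P\wt A = A = \wt A P$ (equivalent to $PA=A=AP$), one computes
\[
  AB \;=\; A\wt B P \;=\; P\wt A\wt B P \;=\; P(1+R_1)P \;=\; P+PR_1P,
\]
so $AB-P=PR_1P\in T^{-\infty}$, and symmetrically $BA-P=PR_2P\in T^{-\infty}$; thus $B$ is the required parametrix.

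For the general case of differing weights and projections I would reduce to the aligned one via the adjoint: Fredholmness of $A$ passes to $A^*A$ and $AA^*$, which---when the $P_j$ are orthogonal---lie in $T^0((g^0,g^0);P_0,P_0)$ and $T^0((g^1,g^1);P_1,P_1)$, respectively. The aligned case produces parametrices $C_0,C_1$ of these, whence $C_0A^*$ is a left- and $A^*C_1$ a right-parametrix of $A$ in $T^\bullet$, and a standard calculation shows they coincide modulo smoothing, yielding a two-sided parametrix. The hardest step I foresee is the non-orthogonal projection case in this reduction, since then $A^*A$ need not lie in $T^0((g^0,g^0);P_0,P_0)$: I would handle this either by replacing each $P_j$ with an orthogonal projection of the same range (which leaves the Hilbert-level Fredholm problem unchanged and is compatible with the algebra in the concrete examples of interest), or by building a Fredholm extension $\wt A\in L^0(\frakg)$ of $A$ directly, relying on the flexibility of the abstract framework to supply suitable ``filler'' on $\ker P_0\to\ker P_1$. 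In either form the heart of the argument is the clean diagonal computation performed above.
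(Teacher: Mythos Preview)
The paper does not give its own proof of this result; it simply cites Theorem~3.7 of \cite{Seil}. So the comparison is between your argument and the one you would find there.

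Your implication a$)\Rightarrow$b$)$ and your ``aligned case'' $g^0=g^1$, $P_0=P_1=P$ are correct and clean: the extension $\wt A=A+(1-P)$ lies in $L^0$, is block-diagonal with respect to $H(g)=H(g,P)\oplus\ker P$, hence Fredholm, and squeezing its $L^\bullet$-parametrix with $P$ gives the Toeplitz parametrix. The reduction of the general case to the aligned one via $A^*A$ and $AA^*$ is also fine \emph{when the $P_j$ are orthogonal}.

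The genuine gap is the non-orthogonal case, and neither of your proposed workarounds closes it in the abstract framework. Replacing $P_j$ by the orthogonal projection onto its range requires that this orthogonal projection lie in $L^0$; nothing in the axioms of Section~\ref{sec:04.1} guarantees this. Your second suggestion, to ``supply suitable filler on $\ker P_0\to\ker P_1$'', is not a construction: for $g^0\neq g^1$ there is no candidate in sight for an $L^0$-operator that is Fredholm from $\ker P_0$ to $\ker P_1$.

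The fix is to bypass the aligned case entirely. Set
\[
  D_0:=A^*A+(1-P_0)^*(1-P_0)\in L^0((g^0,g^0)).
\]
Writing $u=P_0u+(1-P_0)u=:v+w$ one has $\langle D_0u,u\rangle=\|Av\|^2+\|w\|^2$; since $A$ is Fredholm on $H(g^0,P_0)$, this is bounded below by $c\|u\|^2$ on a finite-codimensional subspace, so $D_0$ is Fredholm in $H(g^0)$. The Fredholm property of $L^\bullet$ yields a parametrix $C_0\in L^0$, and $B_L:=P_0C_0A^*P_1$ satisfies $B_LA-P_0=P_0(C_0D_0-1)P_0\in T^{-\infty}$ after using $A^*A=D_0-(1-P_0)^*(1-P_0)$ and $(1-P_0)P_0=0$. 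A right parametrix is obtained analogously from $D_1:=AA^*+(1-P_1)(1-P_1)^*$, whose Fredholmness uses that $A^*\colon H(g^1,P_1^*)\to H(g^0,P_0^*)$ is Fredholm (it realises the Banach-space adjoint of $A$ under the natural duality $H(g^j,P_j)'\cong H(g^j,P_j^*)$). Left and right parametrices then agree modulo smoothing by the usual algebra. This is essentially the argument in \cite{Seil}, and it works uniformly for arbitrary projections in $L^0$.
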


%%%%%%%%%%%%%%%%%%%%%%%%%%%%%%%%%%%%%%%%%%%%%%%%%%
%%%%%%%%%%%%%%%%%%%%%%%%%%%%%%%%%%%%%%%%%%%%%%%%%%
\section{Complexes in operator algebras}\label{sec:05} 

In this section we study complexes whose single operators belong to a general algebra $L^\bullet$. So let 
\begin{equation}\label{eq:L-complex}
 \frakA:\;\dots \xrightarrow{A_{-1}} H(g^0)\xrightarrow{A_0}H(g^1)\xrightarrow{A_1}H(g^2)
 \xrightarrow{A_2}H(g^3)\xrightarrow{A_3}\ldots,
\end{equation}
be a complex with operators $A_j\in L^0(\frakg_j)$, $\frakg_j=(g^j,g^{j+1})$. Of course, $\frakA$ is also 
a Hilbert space complex in the sense of Section \ref{sec:02}. 
Note that the Laplacians associated with $\frakA$ satisfy 
$\Delta_j\in L^0((g^j,g^j))$, $j\in\gz$. 

%%%%%%%%%%%%%%%%%%%%%%%%%%%%%%%%%%%%%%%%%%%%%%%%%%
\subsection{Fredholm complexes and parametrices}\label{sec:05.1} 

The notion of parametrix of a Hilbert space complex has been given in Definition \ref{def:paramterix}. 
In the context of operator algebras the definition is as follows.  

\begin{definition}\label{def:param_in_L}
A \emph{parametrix in $L^\bullet$} of the complex $\frakA$ is a sequence of operators 
$B_j\in L^0(\frakg_j^{(-1)})$, $j\in\gz$, such that 
 $$A_{j-1}B_{j-1}+B_jA_j-1\;\in\; L^{-\infty}((g^j,g^j)),\qquad j\in\gz.$$
In case $B_jB_{j+1}=0$ for every $j$ we call such a parametrix a complex. 
\end{definition}

Clearly, a parametrix in $L^\bullet$ is also a parametrix in the sense of Definition \ref{def:paramterix}, 
but not vice versa.   

\begin{proposition}\label{prop:L-complex-param}
Let $L^\bullet$ have the Fredholm property. 
Then $\frakA$ is a Fredholm complex if, and only if, $\frakA$ has a parametrix in $L^\bullet$. 
\end{proposition}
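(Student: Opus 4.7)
The plan is to prove both directions by reducing to Theorem \ref{thm:complex-basics} and exploiting the Laplacians, using only the Fredholm property of $L^\bullet$ plus closure under adjoints and composition.

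For the ``if'' direction, observe that any parametrix in $L^\bullet$ in the sense of Definition \ref{def:param_in_L} is automatically a parametrix in the Hilbert-space sense of Definition \ref{def:paramterix}, since by hypothesis the elements of $L^{-\infty}$ act as compact operators. Hence Theorem \ref{thm:complex-basics} (b)$\Rightarrow$(a) gives that $\frakA$ is Fredholm.

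For the ``only if'' direction, suppose $\frakA$ is a Fredholm complex. Theorem \ref{thm:complex-basics}, (a)$\Rightarrow$(d), yields that every Laplacian $\Delta_j=A_{j-1}A_{j-1}^*+A_j^*A_j$ is Fredholm on $H(g^j)$. Closure of $L^\bullet$ under adjoints and composition places $\Delta_j$ in $L^0((g^j,g^j))$, so the assumed Fredholm property of $L^\bullet$ provides a parametrix $G_j\in L^0((g^j,g^j))$ with $G_j\Delta_j-1,\ \Delta_jG_j-1\in L^{-\infty}$. Define
\[
 B_j:=A_j^*\,G_{j+1}\;\in\;L^0(\frakg_j^{(-1)}),\qquad j\in\gz.
\]

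The central computation is a ``commutation lemma'': using $A_{j+1}A_j=0$ and its adjoint $A_j^*A_{j+1}^*=0$, one obtains the algebraic identity
\[
 \Delta_{j+1}A_j = A_jA_j^*A_j = A_j\Delta_j,
\]
which, together with $G_{j+1}\Delta_{j+1}\equiv 1$ and $\Delta_jG_j\equiv 1$ modulo $L^{-\infty}$, yields
\[
 G_{j+1}A_j\;\equiv\;G_{j+1}\Delta_{j+1}A_jG_j\;=\;G_{j+1}A_j\Delta_jG_j\;\equiv\;A_jG_j \pmod{L^{-\infty}}.
\]
Feeding this into the definition of $B_j$ gives
\[
 A_{j-1}B_{j-1}+B_jA_j = A_{j-1}A_{j-1}^*G_j + A_j^*G_{j+1}A_j \equiv \big(A_{j-1}A_{j-1}^*+A_j^*A_j\big)G_j = \Delta_jG_j \equiv 1
\]
modulo $L^{-\infty}((g^j,g^j))$, so $(B_j)$ is a parametrix of $\frakA$ in $L^\bullet$.

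The only real subtlety is the commutation step $G_{j+1}A_j\equiv A_jG_j$; everything else is bookkeeping with the closure properties of $L^\bullet$ and an application of Theorem \ref{thm:complex-basics}. Note that the resulting parametrix need not itself be a complex ($B_jB_{j+1}$ is only smoothing, not zero), but Definition \ref{def:param_in_L} does not demand this.
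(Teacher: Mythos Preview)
Your proof is correct and follows essentially the same approach as the paper: both directions go through Theorem \ref{thm:complex-basics}, the Laplacians $\Delta_j$ are shown to be Fredholm and hence have parametrices in $L^\bullet$, and the key commutation $\Delta_{j+1}A_j=A_j\Delta_j$ is used to verify the parametrix identity. The only cosmetic difference is that the paper sets $B_j:=D_jA_j^*$ (parametrix of the Laplacian on the left) whereas you set $B_j:=A_j^*G_{j+1}$ (on the right); the resulting computations are mirror images of one another.
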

\begin{proof}
If $\frakA$ has a parametrix it is a Fredholm complex by Theorem \ref{thm:complex-basics}. 
Vice versa, the Fredholmness of $\frakA$ is equivalent 
to the simultaneous Fredholmness of all Laplacians $\Delta_j$. By assumption on $L^\bullet$, this in 
turn is equivalent to the existence of parametrices $D_j\in L^0((g^j,g^j))$ to $\Delta_j$ for every $j$. Then 
$B_j:=D_jA_j^*$ is a parametrix in $L^\bullet$. In fact, the identity $A_j\Delta_j=\Delta_{j+1}A_j$ implies 
that $D_{j+1}A_j\equiv A_jD_j$, where $\equiv$ means equality modulo smoothing operators. Therefore, 
\begin{align*}
  B_jA_j+A_{j-1}B_{j-1}&=D_jA_j^*A_j+A_{j-1}D_{j-1}A_{j-1}^*\\
  &\equiv D_jA_j^*A_j+D_{j}A_{j-1}A_{j-1}^*=D_j\Delta_j\equiv1.
\end{align*}
This finishes the proof. 
\end{proof}

The parametrix constructed in the previous definition is, in general, not a complex. To assure the existence of a 
parametrix that is also a complex one needs to pose an additional condition on $L^\bullet$ $($as discussed below, 
it is a mild condition, typically satisfied in applications$)$. 
 
\begin{definition}\label{def:fred-ext}
$L^\bullet$ is said to have the \emph{extended Fredholm property} if it has the Fredholm property and for every 
$A\in L^0(\frakg)$, $\frakg=(g,g)$, with $A=A^*$ and which is a Fredholm operator in $H(g)$, 
there exists a parametrix $B\in L^0(\frakg)$ such that 
 $$AB=BA=1-\pi$$
with $\pi\in\scrL(H(g))$ being the orthogonal projection onto $\mathrm{ker}\,A$. 
\end{definition}

Note that, with $A\in L^0(\frakg)$ and $\pi$ as in the previous definition, we have the orthogonal decomposition 
$H(g)=\mathrm{im}\,A\oplus\mathrm{ker}\,A$ and $A:\mathrm{im}\,A\to\mathrm{im}\,A$ is an isomorphism. 
If $T$ denotes the inverse of this isomorphism, then the condition of Definition \ref{def:fred-ext} can be rephrased 
as follows$:$ It is asked that there exists a $B\in L^0(\frakg)$ with 
\begin{equation}\label{eq:spezielle-param}
 Bu=T(1-\pi)u\qquad\text{for all }u\in H(g).
\end{equation}  

\begin{theorem}\label{thm:L-complex-param}
Let $L^\bullet$ have the extended Fredholm property. 
Then $\frakA$ is a Fredholm complex if, and only if, $\frakA$ has a parametrix in $L^\bullet$ which is a complex. 
\end{theorem}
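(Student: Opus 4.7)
The plan is to prove both implications, noting that the ``if'' direction is immediate: a parametrix in $L^\bullet$ is in particular a parametrix in the Hilbert space sense of Definition~\ref{def:paramterix}, so $\frakA$ is Fredholm by Theorem~\ref{thm:complex-basics}. Hence the real work lies in refining the construction from the proof of Proposition~\ref{prop:L-complex-param} so as to obtain a parametrix which is itself a complex.

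For the ``only if'' direction, I would start from the observation that a Fredholm complex has self-adjoint Fredholm Laplacians $\Delta_j = A_{j-1}A_{j-1}^* + A_j^*A_j \in L^0((g^j,g^j))$. The extended Fredholm property provides, for each $j$, a parametrix $G_j \in L^0((g^j,g^j))$ with $G_j\Delta_j = \Delta_j G_j = 1-\pi_j$, where $\pi_j$ is the orthogonal projection onto $\ker\Delta_j$; in particular $\pi_j = 1 - G_j\Delta_j \in L^{-\infty}$. Since $\Delta_j = \Delta_j^*$ and $\pi_j = \pi_j^*$, replacing $G_j$ by $(G_j + G_j^*)/2$ I may assume $G_j = G_j^*$ without loss of generality.

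I then set $B_j := G_j A_j^* \in L^0(\frakg_j^{(-1)})$. Two things must be checked. First, the exact intertwining $A_j G_j = G_{j+1}A_j$, equivalently $G_j A_j^* = A_j^* G_{j+1}$: this follows from $A_j\Delta_j = \Delta_{j+1}A_j$ by applying $G_{j+1}$ on the left and using $\pi_{j+1}A_j = 0$ (since $\mathrm{im}\,A_j \perp \ker\Delta_{j+1}$), giving $A_j = G_{j+1}A_j\Delta_j$, and then multiplying by $G_j$ on the right and using $A_j\pi_j = 0$ (since $\ker\Delta_j \subset \ker A_j$). Second, the parametrix identity:
\begin{equation*}
A_{j-1}B_{j-1} + B_j A_j = A_{j-1}G_{j-1}A_{j-1}^* + G_j A_j^* A_j = G_j(A_{j-1}A_{j-1}^* + A_j^*A_j) = G_j\Delta_j = 1 - \pi_j,
\end{equation*}
which lies in $1 + L^{-\infty}((g^j,g^j))$. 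Finally the complex property of $\frakB$: using the intertwining and self-adjointness,
\begin{equation*}
B_j B_{j+1} = G_j A_j^* G_{j+1} A_{j+1}^* = G_j\, G_j A_j^* A_{j+1}^* = G_j^2 (A_{j+1}A_j)^* = 0.
\end{equation*}

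The key obstacle is obtaining the intertwining of $A_j$ with the $G_j$ \emph{exactly} rather than merely modulo smoothing: it is precisely this exactness that converts $A_{j+1}A_j = 0$ into $B_j B_{j+1} = 0$ on the nose. This is secured by the sharp form of the extended Fredholm property, together with the fact that for a genuine complex the harmonic projections $\pi_j$ both annihilate $\mathrm{im}\,A_{j-1}$ and are annihilated on the right by $A_j$. All other manipulations are algebraic consequences of the Hodge decomposition of the Laplacian.
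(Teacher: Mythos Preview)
Your proof is correct and follows essentially the same strategy as the paper: both use the extended Fredholm property to obtain special parametrices $G_j$ (the paper writes $D_j$) of the Laplacians satisfying $G_j\Delta_j=\Delta_jG_j=1-\pi_j$, and both set $B_j:=G_jA_j^*$.

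The only notable difference is in the verification of $B_jB_{j+1}=0$. The paper argues directly that $D_{j+1}$ preserves the subspace $\mathrm{im}\,A_{j+1}^*$, which is contained in $\ker A_j^*$, so $A_j^*D_{j+1}A_{j+1}^*=0$. You instead establish the \emph{exact} intertwining $A_jG_j=G_{j+1}A_j$ (hence $G_jA_j^*=A_j^*G_{j+1}$ after symmetrizing), and then compute $B_jB_{j+1}=G_j^2(A_{j+1}A_j)^*=0$. Both arguments rest on the same Hodge-theoretic facts, and your route has the minor bonus of yielding the parametrix identity $A_{j-1}B_{j-1}+B_jA_j=1-\pi_j$ exactly, rather than only modulo smoothing as in the paper's reference back to Proposition~\ref{prop:L-complex-param}. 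Your symmetrization step is harmless but in fact unnecessary: the operator $B$ of Definition~\ref{def:fred-ext}, being characterized by \eqref{eq:spezielle-param}, is automatically self-adjoint.
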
 
\begin{proof}
Let $\frakA$ be a Fredholm complex. By assumption, there exist parametrices $D_j\in L^0((g^j,g^j))$ of 
the complex Laplacians $\Delta_j$ with $\Delta_jD_j=D_j\Delta_j=1-\pi_j$, where 
$\pi_j\in\scrL(H(g^j))$ is the orthogonal projection onto the kernel of $\Delta_j$. Now define $B_j:=D_jA_j^*$. 
As we have shown in the proof of Proposition \ref{prop:L-complex-param}, the $B_j$ define a parametrix. 
Since $D_{j+1}$ maps $\mathrm{im}\,A_{j+1}^*=(\mathrm{ker}\,A_{j+1})^\perp$ into itself, and 
$\mathrm{im}\,A_{j+1}^*\subset\mathrm{ker}\,A_{j}^*$, we obtain 
$A_j^*D_{j+1}A_{j+1}^*=0$, hence $B_{j}B_{j+1}=0$. 
\end{proof}

The following theorem gives sufficient conditions for the validity of the extended Fredholm property. 

\begin{theorem}\label{thm:sandwich}
Let $L^\bullet$ have the Fredholm property and assume the following$:$ 
\begin{itemize}
 \item[a$)$] If $A=A^*\in L^0(\frakg)$, $\frakg=(g,g)$, is a Fredholm operator in $H(g)$, 
  then the orthogonal projection onto the kernel of $A$ is an element of $L^{-\infty}(\frakg)$. 
 \item[b$)$] $R_1TR_0\in L^{-\infty}(\frakg)$, $\frakg=(g,g)$, whenever $R_0,R_1\in L^{-\infty}(\frakg)$ and 
  $T\in\scrL(H(g))$. 
\end{itemize}
Then $L^\bullet$ has the extended Fredholm property. 
\end{theorem}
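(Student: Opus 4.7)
The plan is to reduce the construction of the desired parametrix to proving that a carefully chosen Fredholm-yet-invertible perturbation of $A$ has its inverse lying in $L^0(\frakg)$. Given $A = A^* \in L^0(\frakg)$ Fredholm, let $\pi$ denote the orthogonal projection onto $\mathrm{ker}\,A$; by hypothesis (a), $\pi \in L^{-\infty}(\frakg)$, and the identities $A\pi = \pi A = 0$ follow from self-adjointness. First I would form $\wh{A} := A + \pi \in L^0(\frakg)$ and check that $\wh A$ is invertible as a Hilbert-space operator: injectivity follows because $\wh A v = 0$ forces $A v = -\pi v$ with the two sides lying in the orthogonal subspaces $\mathrm{im}\,A$ and $\mathrm{ker}\,A$, while surjectivity comes from splitting any $w$ along $H(g) = \mathrm{im}\,A \oplus \mathrm{ker}\,A$. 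Once $\wh{A}^{-1}\in\scrL(H(g))$ is secured, the computations $\pi\wh A = \wh A\pi = \pi$ give $\wh A^{-1}\pi = \pi\wh A^{-1} = \pi$, whence $B := \wh A^{-1} - \pi$ satisfies $AB = BA = 1-\pi$. So everything hinges on showing $\wh A^{-1} \in L^0(\frakg)$.

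Because $\wh A$ is in particular Fredholm, the Fredholm property of $L^\bullet$ produces a parametrix $\wh B \in L^0(\frakg)$ with $\wh A \wh B = 1 + R_1$ and $\wh B \wh A = 1 + R_0$ for $R_0,R_1 \in L^{-\infty}(\frakg)$. Left-multiplying the first identity by $\wh A^{-1}$ and right-multiplying the second by $\wh A^{-1}$ yields the crucial commutation identity
\begin{equation*}
\wh A^{-1} R_1 = \wh B - \wh A^{-1} = R_0 \wh A^{-1}.
\end{equation*}
Now rewrite $1 = \wh A\wh B - R_1$ and compute
\begin{equation*}
R_0 \wh A^{-1} = R_0 \wh A^{-1}(\wh A\wh B - R_1) = R_0 \wh B - R_0 \wh A^{-1} R_1.
\end{equation*}
The term $R_0\wh B$ lies in $L^{-\infty}(\frakg)$ by the composition rule $L^{-\infty}\cdot L^0 \subset L^{-\infty}$ built into the algebra, while $R_0\wh A^{-1} R_1$ is precisely of the sandwich form handled by hypothesis (b) and therefore also lies in $L^{-\infty}(\frakg)$. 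Consequently $R_0 \wh A^{-1} \in L^{-\infty}(\frakg)$, and from $\wh A^{-1} = \wh B - R_0\wh A^{-1}$ we conclude that $\wh A^{-1} \in L^0(\frakg)$, hence $B := \wh A^{-1}-\pi \in L^0(\frakg)$ furnishes the required parametrix of Definition \ref{def:fred-ext}.

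The main obstacle is the fact that assumption (b) delivers smoothing only for genuine sandwiches $R_1 T R_0$ with a smoothing operator on each side, whereas the naive error $R_0\wh A^{-1}$ that one meets on the road to $\wh A^{-1}\in L^0$ carries smoothing on one side only. The trick that unlocks the proof is to substitute $1 = \wh A\wh B - R_1$ on the right of $R_0\wh A^{-1}$: this manufactures a second smoothing factor, yielding exactly the sandwich $R_0\wh A^{-1} R_1$ to which (b) applies, with the remaining contribution $R_0\wh B$ being smoothing for the elementary reason of composability. Apart from this sandwiching maneuver, everything else is bookkeeping: verifying invertibility of $\wh A$, checking the intertwinings with $\pi$, and reading off $B = \wh A^{-1} - \pi$.
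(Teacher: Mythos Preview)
Your proof is correct and follows essentially the same route as the paper's: both hinge on the sandwich trick to upgrade a one-sided smoothing remainder into a two-sided one to which hypothesis (b) applies. The only cosmetic difference is that you first pass to the invertible perturbation $\wh A = A + \pi$ and show $\wh A^{-1}\in L^0(\frakg)$, whereas the paper works directly with $B = T(1-\pi)$ and a parametrix of $A$; since $\wh A^{-1}-\pi = T(1-\pi)$ and any parametrix of $A$ is also one of $\wh A$, the two arguments are interchangeable.
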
 

In other words, condition $b)$ asks that sandwiching a bounded operator $T$ $($not necessarily belonging to 
the algebra$)$ between two smoothing operators always results in being a smoothing operator. 
A typical example are pseudodifferential operators on closed manifolds, where the smoothing operators are 
those integral operators with a smooth kernel, and sandwiching any operator which is continuous in $L_2$-spaces 
results again in an integral operator with smooth kernel. Similarly, also Boutet de Monvel's algebra and many other 
algebras of pseudodifferential operators are covered by this theorem.  

\begin{proof}[Proof of Theorem $\mathrm{\ref{thm:sandwich}}$]
Let $A=A^*\in L^0(\frakg)$, $\frakg=(g,g)$, be a Fredholm operator in $H(g)$. Let $B=T(1-\pi)\in\scrL(H(g))$ 
be as in \eqref{eq:spezielle-param}; initially, $B$ is only a bounded operator in $H(g)$, but we shall 
show now that $B$ infact belongs to $L^0(\frakg)$. 

By assumption we find a parametrix $P\in L^0(\frakg)$ to $A$, i.e. $R_1:=1-PA$ and $R_0:=1-AP$ belong to 
$L^{-\infty}(\frakg)$. Then, on $H(g)$, 
\begin{align*}
 B-P&=(PA+R_1)(P-B)=P(\pi-R_0)+R_1(P-B),\\
 B-P&=(P-B)(AP+R_0)=(\pi-R_1)P+(P-B)R_0
\end{align*}
Substituting the second equation into the first and rearranging terms yields 
 $$B-P=P(\pi-R_0)+R_1(\pi-R_1)P+R_1(P-B)R_0.$$
The right-hand side belongs to $L^{-\infty}(\frakg)$ by assumptions $(a)$ and $(b)$. Since $P$ belongs to 
$L^0(\frakg)$, then so does $B$. 
\end{proof}

%%%%%%%%%%%%%%%%%%%%%%%%%%%%%%%%%%%%%%%%%%%%%%%%%%
\subsection{Elliptic complexes}\label{sec:05.2} 

Let us now assume that we deal with a classical algebra $L^\bullet_\cl$ and the complex $\frakA$ from 
\eqref{eq:L-complex} is made up of operators $A_j\in L^0_\cl(\frakg_j)$, $\frakg_j=(g^j,g^{j+1})$. 
If $A\mapsto\sigma(A)=\big(\sigma_1(A),\ldots,\sigma_n(A)\big)$ is the associated principal symbol map, 
cf.\  Section \ref{sec:04.1.2}, then we may associate with $\frakA$ the families of complexes 
\begin{equation}\label{eq:symbol-complex}
   \sigma_\ell(\frakA)\colon\ldots
   \xrightarrow{\sigma_\ell(A_{-1})} E_\ell(g^0)
   \xrightarrow{\sigma_\ell(A_0)}E_\ell(g^1)
   \xrightarrow{\sigma_\ell(A_1)}E_\ell(g^2)\xrightarrow{\sigma_\ell(A_2)}\ldots,
\end{equation}
for $\ell=1,\ldots,n$; here we shall assume that, for each $\ell$, all bundles $E^\ell(g)$, $g\in G$, have 
the same base space and that $\sigma_\ell(\frakA)$ is a family of complexes as described in 
Section \ref{sec:02.3}. 

\begin{definition}\label{def:L-complex-elliptic}
The complex $\frakA$ in $L^\bullet_\cl$ is called \emph{elliptic} if all the associated families of 
complexes $\sigma_\ell(\frakA)$, $\ell=1,\ldots,n$, are exact families $($in the sense of 
Section $\mathrm{\ref{sec:02.3}})$. 
\end{definition}

\begin{theorem}\label{thm:L-complex-classical}
For a complex $\frakA$ in $L^\bullet_\cl$ the following properties are equivalent$:$
\begin{itemize}
 \item[a$)$] $\frakA$ is elliptic. 
 \item[b$)$] All Laplacians $\Delta_j$, $j\in\gz$, associated with $\frakA$ are elliptic. 
\end{itemize}
These properties imply 
\begin{itemize}
 \item[c$)$]  $\frakA$ has a parametrix in $L^\bullet_\cl$. 
 \item[d$)$]  $\frakA$ is a Fredholm complex. 
\end{itemize}
In case $L^\bullet_\cl$ has the Fredholm property, all four properties are equivalent. 
In presence of the extended Fredholm property, the parametrix can be chosen to be a complex. 
\end{theorem}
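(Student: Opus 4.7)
The plan is to establish the equivalence $(a)\Leftrightarrow (b)$ first, then the implications $(b)\Rightarrow(c)\Rightarrow(d)$, and finally close the loop $(d)\Rightarrow(b)$ under the Fredholm property and upgrade to a complex parametrix under the extended Fredholm property. The argument throughout piggy-backs on Proposition \ref{prop:L-complex-param} and Theorem \ref{thm:L-complex-param}, but uses the classical structure to keep everything inside $L^\bullet_\cl$.

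For $(a)\Leftrightarrow(b)$, the key observation is that multiplicativity of the principal symbol together with property $(iii)$ of Section \ref{sec:04.1.2} gives, for each $\ell$ and each $j$,
\begin{equation*}
\sigma_\ell(\Delta_j)=\sigma_\ell(A_{j-1})\sigma_\ell(A_{j-1})^*+\sigma_\ell(A_j)^*\sigma_\ell(A_j),
\end{equation*}
so that $\sigma_\ell(\Delta_j)_x$ is precisely the $j$-th Hilbert space Laplacian of the complex $\sigma_\ell(\frakA)_x$ at each point $x$ of the base $B_\ell$. Invoking fiberwise the equivalence stated after Definition \ref{def:mapping-cone} (i.e., a Hilbert space complex is exact at position $j$ iff its $j$-th Laplacian is an isomorphism), ellipticity of every $\Delta_j$ (i.e.\ invertibility of $\sigma_\ell(\Delta_j)$ for every $\ell$ and every $j$) is equivalent to exactness of every family $\sigma_\ell(\frakA)$, which is precisely ellipticity of $\frakA$.

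For $(b)\Rightarrow(c)$, assume all $\Delta_j$ are elliptic. Property $(v)$ of classical algebras yields parametrices $D_j\in L^0_\cl((g^j,g^j))$. Setting $B_j:=D_jA_j^*\in L^0_\cl(\frakg_j^{(-1)})$, the intertwining $A_j\Delta_j=\Delta_{j+1}A_j$ (which follows from $A_{j+1}A_j=0$) combined with taking adjoints gives $A_j^*D_{j+1}\equiv D_jA_j^*$ modulo smoothing operators, and the computation in Proposition \ref{prop:L-complex-param} shows $A_{j-1}B_{j-1}+B_jA_j\equiv D_j\Delta_j\equiv1$, so $(B_j)$ is a parametrix of $\frakA$ in $L^\bullet_\cl$. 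The implication $(c)\Rightarrow(d)$ is immediate: any parametrix in $L^\bullet_\cl$ is a fortiori a Hilbert space parametrix (Definition \ref{def:paramterix}), since smoothing operators are compact, hence Fredholmness follows from Theorem \ref{thm:complex-basics}. If $L^\bullet$ has the Fredholm property, then $(d)\Rightarrow(b)$ closes the loop: Theorem \ref{thm:complex-basics} gives Fredholmness of each self-adjoint $\Delta_j$, hence by the Fredholm property each $\Delta_j$ admits a parametrix in $L^0_\cl$, and property $(v)$ upgrades this to ellipticity of $\Delta_j$.

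Finally, under the extended Fredholm property, we improve the parametrix to a complex by repeating the construction of Theorem \ref{thm:L-complex-param}: the parametrices $D_j$ are now chosen to satisfy $\Delta_jD_j=D_j\Delta_j=1-\pi_j$ with $\pi_j$ the orthogonal projection onto $\ker\Delta_j$, and the identity $A_j^*D_{j+1}A_{j+1}^*=0$ (using that $D_{j+1}$ preserves $\mathrm{im}\,A_{j+1}^*\subset\ker A_j^*$) yields $B_jB_{j+1}=0$; crucially, these $D_j$ still lie in $L^0_\cl$, as the extended Fredholm property produces the parametrix inside $L^0(\frakg)$, i.e.\ inside $L^0_\cl(\frakg)$ in the classical setting. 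The main technical point to verify carefully is the fiberwise Hodge-theoretic step in paragraph two, namely that exactness of the symbol complex at a point is actually equivalent to invertibility of its symbolic Laplacian; this requires the symbolic operators to have closed range fiberwise, which in turn is built into the hypothesis that $\sigma_\ell(\frakA)$ is indeed a family of complexes in the sense of Section \ref{sec:02.3} with well-defined pointwise cohomology, and is automatic once one symbol Laplacian is invertible (so the argument can be bootstrapped through the equivalence without circularity).
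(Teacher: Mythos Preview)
Your proof is correct and follows essentially the same route as the paper, which simply refers back to the constructions of Proposition \ref{prop:L-complex-param} and Theorem \ref{thm:L-complex-param} rather than rewriting them. Two minor remarks: the exactness--Laplacian equivalence you invoke appears after the definition of the Laplacians in Section \ref{sec:02.1}, not after Definition \ref{def:mapping-cone}; and your closing paragraph overcomplicates the closed-range issue, since ellipticity of $\frakA$ asks for exactness of each $\sigma_\ell(\frakA)$ at \emph{every} position simultaneously, so all fibrewise images equal kernels and are therefore closed, making the symbolic Laplacians invertible directly without any bootstrapping.
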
 
\begin{proof}
The equivalence of a$)$ and b$)$ is simply due to the fact that the principal symbol 
$\sigma_\ell(\Delta_j)$ just coincides with the $j$-th Laplacian associated with $\sigma_\ell(\frakA)$ 
and therefore simultaneous exactness of $\sigma_\ell(\frakA)$, $1\le\ell\le n$, 
 in the $j$-th position is equivalent to the invertibility of all $\sigma_\ell(\Delta_j)$, i.e., the ellipticity of $\Delta_j$. 
The rest is seen as above in Poposition \ref{prop:L-complex-param} and Theorem \ref{thm:L-complex-param}.
\end{proof}

The complex $\frakA$ induces the families of complexes $\sigma_\ell(\frakA)$. The following theorem is a kind of 
reverse statement, i.e., starting from exact families of complexes we may construct a complex of operators. 
For a corresponding result in the framework of Boutet de Monvel's algebra see Lemma 1.3.10 in \cite{PiSc} and 
Theorem 8.1 in \cite{KTT}. 

\begin{theorem}\label{thm:L-lift1}
Assume that $L^\bullet_\cl$ has the extended Fredholm property. Let $N\in\nz$ and 
$A_j\in L^0(\frakg_j)$, $\frakg_j=(g^j,g^{j+1})$, $j=0,\ldots, N$, be such that  
the associated sequences of principal symbols form exact families of complexes 
\begin{equation*}
 \;0\lra E_\ell(g^0)\xrightarrow{\sigma_\ell(A_0)}E_\ell(g^1)
  \xrightarrow{\sigma_\ell(A_1)}E_\ell(g^2)\ldots \xrightarrow{\sigma_\ell(A_N)}E_\ell(g^{N+1})\lra 0, 
\end{equation*}
$\ell=1,\ldots,n$. Then there exist operators $\wt{A}_j\in L^{0}(\frakg_j)$, $j=0,\ldots,N$, 
with $\sigma(\wt{A}_j)=\sigma(A_j)$ and 
such that 
 $$\wt{\frakA}:\;0\lra H(g^0)\xrightarrow{\wt{A}_0}H(g^1)\xrightarrow{\wt{A}_1}H(g^2)
     \ldots\xrightarrow{\wt{A}_N}H(g^{N+1})\lra0$$
is a complex. In case $A_{j+1}A_{j}$ is smoothing for every $j$, the operators $\wt{A}_j$ can be chosen 
in such a way that $\wt{A}_j-A_j$ is smoothing for every $j$. 
\end{theorem}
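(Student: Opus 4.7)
The plan is to construct the operators $\wt{A}_j$ by descending induction on $j$. I would set $\wt{A}_N:=A_N$, and inductively suppose that operators $\wt{A}_{j+1},\ldots,\wt{A}_N$ have already been produced in $L^0_\cl$ with $\sigma(\wt{A}_k)=\sigma(A_k)$ and $\wt{A}_{k+1}\wt{A}_k=0$ for every $k\ge j+1$. Writing $\wt{A}_{N+1}:=0$ by convention, I form the partial Laplacian
\[
\Delta:=\wt{A}_{j+2}^*\wt{A}_{j+2}+\wt{A}_{j+1}\wt{A}_{j+1}^*\;\in\;L^0_\cl((g^{j+2},g^{j+2})).
\]
Its principal symbol is the $(j{+}2)$-th Hodge Laplacian of the (already exact) symbol complex, hence fibrewise invertible, so $\Delta$ is elliptic. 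By the extended Fredholm property, $\Delta$ admits a parametrix $G\in L^0_\cl$ with $G\Delta=\Delta G=1-\Pi$, where $\Pi$ is the orthogonal projection onto $\ker\Delta$. I then set
\[
\wt{A}_j:=A_j-\wt{A}_{j+1}^*\,G\,\wt{A}_{j+1}A_j.
\]

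Exactness of the symbol complex gives $\sigma(\wt{A}_{j+1})\sigma(A_j)=\sigma(A_{j+1})\sigma(A_j)=0$, so the correction has zero principal symbol and $\sigma(\wt{A}_j)=\sigma(A_j)$. In the additional setting where every $A_{k+1}A_k$ is smoothing, a short induction shows that $\wt{A}_{j+1}A_j$ is smoothing as well, whence the correction lies in $L^{-\infty}$ by the ideal property and $\wt{A}_j-A_j$ is smoothing.

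The heart of the matter is to verify $\wt{A}_{j+1}\wt{A}_j=0$. With $R:=\wt{A}_{j+1}A_j$, I would exploit the orthogonal Hodge decomposition
\[
H(g^{j+2})=\ker\Delta\;\oplus\;\overline{\mathrm{im}\,\wt{A}_{j+2}^*}\;\oplus\;\overline{\mathrm{im}\,\wt{A}_{j+1}},
\]
whose last two summands are mutually orthogonal because $\wt{A}_{j+2}\wt{A}_{j+1}=0$. Both $\Delta$ and its generalized inverse $G$ leave each summand invariant ($\Delta$ is self-adjoint, preserves each piece, and restricts to a bijection on the latter two). Since $R$ takes values in $\overline{\mathrm{im}\,\wt{A}_{j+1}}$, one obtains $\Pi R=0$ and $\wt{A}_{j+2}GR=0$, and consequently $\wt{A}_{j+2}^*\wt{A}_{j+2}GR=0$. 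Applying $\Delta G=1-\Pi$ to $R$ then yields $\wt{A}_{j+1}\wt{A}_{j+1}^*GR=R$, from which
\[
\wt{A}_{j+1}\wt{A}_j=R-\wt{A}_{j+1}\wt{A}_{j+1}^*GR=0.
\]

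The step I expect to be most delicate is the assertion that $G$ respects the Hodge decomposition \emph{exactly}, not merely modulo smoothing. This is precisely what distinguishes the extended Fredholm property from the ordinary one and is what makes the identities $\Pi R=0$ and $\wt{A}_{j+2}GR=0$ genuine rather than approximate; without this, one would only obtain $\wt{A}_{j+1}\wt{A}_j\in L^{-\infty}$ and be forced to iterate without any guarantee of convergence. A minor subsidiary point is the base case $j=N-1$, where $\Delta=A_N A_N^*$ is elliptic thanks to the fibrewise surjectivity of $\sigma(A_N)$ at the top of the exact symbol complex.
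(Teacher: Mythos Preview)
Your proof is correct and is essentially the same as the paper's. Your operator $\wt{A}_j=A_j-\wt{A}_{j+1}^*G\wt{A}_{j+1}A_j$ is precisely $\Pi_{j+1}A_j$ with $\Pi_{j+1}:=1-\wt{A}_{j+1}^*G\wt{A}_{j+1}$; the paper verifies directly that $\Pi_{j+1}$ is the orthogonal projection in $H(g^{j+1})$ onto $\ker\wt{A}_{j+1}$ (so that $\wt{A}_{j+1}\wt{A}_j=\wt{A}_{j+1}\Pi_{j+1}A_j=0$ is immediate), whereas you reach the same conclusion via the Hodge decomposition and the invariance of the summands under $G$---a cosmetically different but equivalent route.
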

\begin{proof}
We take $\wt{A}_N=A_N$ and then apply an iterative procedure, first modifying the operator 
$A_{N-1}$ and then, subsequently, the operators $A_{N-2},\ldots,A_0$. 

Consider the Laplacian 
$\Delta_{N+1}=A_NA_N^*\in L^0_\cl(g^{N+1},g^{N+1})$. Since, by assumption, any $\sigma_\ell(A_N)$ is  
$($fibrewise$)$ surjective, $\sigma_\ell(\Delta_{N+1})=\sigma_\ell(A_N)\sigma_\ell(A_N)^*$ is an isomorphism. 
Hence $\Delta_{N+1}$ is elliptic. By the extended Fredholm property we find a parametrix 
$D_{N+1}\in L^0((g^{N+1},g^{N+1}))$ of $\Delta_{N+1}$ with 
$\Delta_{N+1}D_{N+1}=D_{N+1}\Delta_{N+1}=1-\pi_{N+1}$, where 
$\pi_{N+1}\in L^{-\infty}(g^{N+1},g^{N+1})$ is the orthogonal projection in $H(g^{N+1})$ onto the kernel of 
$\Delta_{N+1}$, i.e., onto the kernel of $A_{N}^*$. Then it is straightforward to check that 
$\Pi_N:=1-A_N^*D_{N+1}A_N$ is the orthogonal projection in $H(g^{N})$ onto the kernel of $A_N$. 
Then let us set 
 $$\wt{A}_{N-1}:=\Pi_NA_{N-1}=A_{N-1}+R_{N-1},\qquad R_{N-1}=-A_N^*D_{N+1}A_NA_{N-1}.$$
Since $\sigma(A_NA_{N-1})=\sigma(A_N)\sigma(A_{N-1})=0$ we find that $\sigma(R_{N-1})=0$. Obviously, 
if $A_NA_{N-1}$ is smoothing then so is $R_{N-1}$. This finishes the first step of the procedure. 

Next we are going to modify $A_{N-2}$. For notational convenience redefine $A_{N-1}$ as $\wt{A}_{N-1}$. 
Similarly as above, the $n$-th Laplacian $\Delta_N=A_N^*A_N+\wt{A}_{N-1}\wt{A}_{N-1}^*$ is elliptic, 
due to the exactness of the symbol complexes. We then let $D_N$ be a parametrix with  
$\Delta_{N}D_{N}=D_{N}\Delta_{N}=1-\pi_{N}$, where $\pi_{N}\in L^{-\infty}(g^{N},g^{N})$ is the orthogonal 
projection in $H(g^{N})$ onto the kernel of $\Delta_{N}$. Then set $\Pi_{N-1}=1-A_{N-1}^*D_{N}A_{N-1}$. 
Now observe that 
\begin{align*}
 (1-\Pi_{N-1})^2&=A_{N-1}^*D_NA_{N-1}A_{N-1}^*D_NA_{N-1}\\
 &=A_{N-1}^*D_N\Delta_N D_NA_{N-1}A_{N-1}-A_{N-1}^*D_NA_{N}^*A_{N}D_NA_{N-1}\\
 &=1-\Pi_{N-1},
\end{align*}
since $D_N\Delta_N D_N=D_N(1-\pi_N)=D_N$ and $D_N$ maps $\mathrm{im}\,(A_{N-1})$ into itself, hence 
the second summand vanishes in view of $\mathrm{im}\,(A_{N-1})\subset\mathrm{ker}\,(A_{N})$. 
Similarly one verifies that $\mathrm{im}\,(\Pi_{N-1})=\mathrm{ker}\,(A_{N-1})$. In other words, 
$\Pi_{N-1}$ is the orthogonal projection in $H(g^{N-1})$ onto the kernel of $A_{N-1}$. Then proceed as 
above, setting $\wt{A}_{N-2}=\Pi_{N-1}\wt{A}_{N-1}$. 
Repeat this step for $A_{N-3}$, and so on. 
\end{proof}

\begin{remark}
Let notations and assumptions be as in Theorem $\mathrm{\ref{thm:L-lift1}}$. Though the $A_{j}$ do not form a 
complex, the compostions $A_{j+1}A_j$ have vanishing principal symbols and thus can be considered as ``small". 
In the literature such kind of almost-complexes are known as \emph{essential complexes}, cf.\ \cite{AmVa}, 
or \emph{quasicomplexes}, cf.\ \cite{KTT}. In this spirit, Theorem $\mathrm{\ref{thm:L-lift1}}$ says that 
any elliptic quasicomplex in $L^\bullet_\cl$ can be ``lifted" to an elliptic complex. 
\end{remark}

%%%%%%%%%%%%%%%%%%%%%%%%%%%%%%%%%%%%%%%%%%%%%%%%%%
%%%%%%%%%%%%%%%%%%%%%%%%%%%%%%%%%%%%%%%%%%%%%%%%%%
\section{Complexes in Toeplitz algebras}\label{sec:06} 

After having developed the theory for complexes in an operator algebra 
$L^\bullet_{(\cl)}$, let us now turn to complexes in the associated Toeplitz algebra $T^\bullet_{(\cl)}$. 
These have the form 
\begin{equation}\label{eq:T-complex}
 \frakA_\frakP:\;\dots \xrightarrow{A_{-1}} H(g^0;P_0)\xrightarrow{A_0}H(g^1;P_1)\xrightarrow{A_1}H(g^2;P_2)
 \xrightarrow{A_2}\ldots,
\end{equation}
with operators $A_j\in L^0_{(\cl)}(\frakg_j;P_j;P_{j+1})$, $\frakg_j=(g^j,g^{j+1})$; we use the subscript 
$\frakP$ to indicate the involved sequence of projections $P_j$, $j\in\gz$. Of course, if all projections are equal to the 
identity, we obtain a usual complex in $L^\bullet$. 

As we shall see, the basic definitions used for complexes in $L^\bullet$ generalize straightforwardly to the 
Toeplitz case. However, the techniques developed in the previous section do not apply directly to complexes in 
Toeplitz algebras. Mainly, this is due to the 
fact that Toeplitz algebras behave differently under application of the adjoint, i.e., 
 $$A\mapsto A^*\colon T^0(\frakg;P_0,P_1)\lra T^0(\frakg;P_1^*,P_0^*).$$
As a consequence, it is for instance not clear which operators substitute the  Laplacians that played a 
decisive role in the analysis of complexes in $L^\bullet$.   

To overcome this difficulty, we shall develop a method of lifting a complex $\frakA_\frakP$ to a complex in 
$L^\bullet$, which preserves the essential properties of $\frakA_\frakP$. To the lifted complex we apply 
the theory of complexes in $L^\bullet$ and then arive at corresponding conclusions for the original complex 
$\frakA_\frakP$. 

For clarity, let us state explicitly the definitions of parametrix and ellipticity. 

\begin{definition}\label{def:param_in_T}
A \emph{parametrix in $T^\bullet$} of the complex $\frakA_\frakP$ is a sequence of operators 
$B_j\in L^0(\frakg_j^{(-1)};P_{j+1},P_j)$, $j\in\gz$, such that 
 $$A_{j-1}B_{j-1}+B_jA_j-P_j\;\in\; L^{-\infty}((g^j,g^j);P_j,P_j)\qquad j\in\gz.$$
In case $B_jB_{j+1}=0$ for every $j$ we call such a parametrix a complex. 
\end{definition}

Let, additionally, $L^\bullet=L^\bullet_\cl$ be classical with principal symbol map
$A\mapsto\sigma(A)=(\sigma_1(A),\ldots,\sigma_n(A))$. Then we associate with $\frakA_\frakP$ the 
families of complexes 
\begin{equation}\label{eq:symbol-complex-toeplitz}
     \sigma_\ell(\frakA_\frakP)\colon\ldots E_\ell(g^0,P_0)\xrightarrow{\sigma_\ell(A_0;P_0,P_1)}E_\ell(g^1,P_1)
     \xrightarrow{\sigma_\ell(A_1;P_1,P_2)}E_\ell(g^2,P_2)\ldots,
\end{equation}
cf. \eqref{eq:symb-t}. 

\begin{definition}\label{def:ellipticity_in_T}
A complex $\frakA_\frakP$ in $T^\bullet_\cl$ is called \emph{elliptic} if all $\sigma_\ell(\frakA_\frakP)$, 
$1\le\ell\le n$, are exact families of complexes.  
\end{definition}

We shall now investigate the generalization of Proposition \ref{prop:L-complex-param} and Theorems 
\ref{thm:L-complex-param}, \ref{thm:L-complex-classical} and \ref{thm:L-lift1} to the setting of 
complexes in Toeplitz algebras. 

%%%%%%%%%%%%%%%%%%%%%%%%%%%%%%%%%%%%%%%%%%%%%%%%%%
\subsection{Lifting of complexes}\label{sec:06.1} 

Consider an at most semi-infinite complex $\frakA_\frakP$ in $T^\bullet$, i.e., 
\begin{equation}\label{eq:T-complex2}
 \frakA_\frakP\colon 0\lra H(g^0;P_0)\xrightarrow{A_0}H(g^1;P_1)\xrightarrow{A_1}H(g^2;P_2)
 \xrightarrow{A_2} \ldots
\end{equation}
with operators $A_j\in L^0_{(\cl)}(\frakg_j;P_j;P_{j+1})$, $\frakg_j=(g^j,g^{j+1})$ for $j\ge0$. 
Moreover, assume that $L^\bullet$ has the block-matrix property described in Section \ref{sec:02.2}. 

Let us define the weights 
 $$g^{[j]}:=g^j\oplus g^{j-1}\oplus\ldots\oplus g^0\in G,\qquad j=0,1,2,\ldots.$$
Then we have 
 $$H(g^{[j]})=H(g^j)\oplus H(g^{j-1})\oplus\ldots\oplus H(g^0).$$
We then define 
 $$A_{[j]}\in L^0(\frakg_{[j]}),\qquad \frakg_{[j]}:=(g^{[j]},g^{[j+1]}),$$
by 
\begin{align}\label{eq:a_jlift}
\begin{split}
 A_{[j]}(u_j,&u_{j-1},\ldots,u_0)\\
 &=\big(A_ju_j,(1-P_j)u_j,P_{j-1}u_{j-1},(1-P_{j-2})u_{j-2},P_{j-3}u_{j-3},\ldots\big).
\end{split}
\end{align}
In other words, the block-matrix representation of $A_{[j]}$ is 
\begin{align*}
  A_{[j]}=\mathrm{diag}(A_j,0,0,0,\ldots)+\mathrm{subdiag}(1-P_j,P_{j-1},1-P_{j-2},P_{j-3},\ldots).
\end{align*}
Since $A_{j+1}A_j=0$ as well as $(1-P_{j+1})A_j=0$, it follows immediately that $A_{[j+1]}A_{[j]}=0$. 
Therefore, 
\begin{equation}\label{eq:complex_lift}
 \frakA_\frakP^\wedge:\;0\lra H(g^{[0]})\xrightarrow{A_{[0]}}H(g^{[1]})\xrightarrow{A_{[1]}}H(g^{[2]})
 \xrightarrow{A_{[2]}}H(g^{[3]})\xrightarrow{A_{[3]}}\ldots,
\end{equation} 
defines a complex in $L^\bullet$. Inserting the explicit form of $H(g^{[j]})$, this complex takes the form

\vspace*{-19ex} 

$$
 \frakA_\frakP^\wedge:\;0\lra H(g^{0})
 \xrightarrow{A_{[0]}}
  \begin{matrix}\\ \\ H(g^{1})\\ \oplus\\ H(g^0)\end{matrix}
 \xrightarrow{A_{[1]}}
  \begin{matrix}\\ \\ \\ \\ H(g^{2})\\ \oplus\\ H(g^1)\\ \oplus\\ H(g^0) \end{matrix}
 \xrightarrow{A_{[2]}}
  \begin{matrix}\\ \\ \\ \\ \\ \\ H(g^{3})\\ \oplus\\ H(g^2)\\ \oplus\\ H(g^1)\\ \oplus\\ H(g^0) \end{matrix}
 \xrightarrow{A_{[3]}}\ldots
$$ 

\begin{definition}
The complex $\frakA_\frakP^\wedge$ defined in \eqref{eq:complex_lift} is called the lift of the complex 
$\frakA_\frakP$ from \eqref{eq:T-complex2}. 
\end{definition}

\begin{proposition}\label{prop:lift}
Let $\frakA_\frakP^\wedge$ be the lift of $\frakA_\frakP$ as described above. Then 
\begin{align*}
 \mathrm{ker}\,A_{[j]}
 =&\,\mathrm{ker}\,\big(A_j:H(g^j,P_j)\to H(g^{j+1},P_{j+1})\big)\oplus \\
    &\oplus \mathrm{ker}\,P_{j-1}\oplus \mathrm{im}\,P_{j-2}\oplus \mathrm{ker}\,P_{j-3}\oplus\ldots,\\
 \mathrm{im}\,A_{[j]}
 =&\,\mathrm{im}\,\big(A_j:H(g^j,P_j)\to H(g^{j+1},P_{j+1})\big)\oplus \\
    &\oplus \mathrm{ker}\,P_{j}\oplus \mathrm{im}\,P_{j-1}\oplus \mathrm{ker}\,P_{j-2}\oplus\ldots.
\end{align*}
Here, image and kernel of the projections $P_k$ refer to the maps $P_k\in\scrL(H(g^k))$. 
Therefore, both complexes have the same cohomology spaces, 
 $$\scrH_j(\frakA_\frakP)\cong\scrH_j(\frakA_\frakP^\wedge),\qquad j=0,1,2,\ldots$$
In particular,  $\frakA_\frakP$ is a Fredholm complex or an exact complex if, and only if, its lift $\frakA_\frakP^\wedge$ is 
a Fredholm complex or an exact complex, respectively. 
\end{proposition}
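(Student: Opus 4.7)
The plan is to verify the stated formulas for $\mathrm{ker}\,A_{[j]}$ and $\mathrm{im}\,A_{[j]}$ by direct inspection of \eqref{eq:a_jlift}, and then to observe that in every summand \emph{except the first}, the kernel of $A_{[j]}$ and the image of $A_{[j-1]}$ coincide; the quotient is therefore concentrated in the first summand and equals $\scrH_j(\frakA_\frakP)$.

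First I would unpack kernels. The equation $A_{[j]}(u_j,u_{j-1},\ldots,u_0)=0$ forces simultaneously $A_ju_j=0$ and $(1-P_j)u_j=0$, which together say $u_j\in\mathrm{ker}\,\bigl(A_j:H(g^j,P_j)\to H(g^{j+1},P_{j+1})\bigr)$; it further forces $P_{j-1}u_{j-1}=0$, $(1-P_{j-2})u_{j-2}=0$, $P_{j-3}u_{j-3}=0$, and so on alternately. This gives exactly the stated description of $\mathrm{ker}\,A_{[j]}$. For the image, I would apply $A_{[j-1]}$ to an arbitrary tuple $(v_{j-1},v_{j-2},\ldots,v_0)$: since $A_{j-1}=P_jA_{j-1}P_{j-1}$, the first component sweeps out $\mathrm{im}\,\bigl(A_{j-1}:H(g^{j-1},P_{j-1})\to H(g^j,P_j)\bigr)$; the subsequent components $(1-P_{j-1})v_{j-1}$, $P_{j-2}v_{j-2}$, $(1-P_{j-3})v_{j-3},\ldots$ sweep out all of $\mathrm{ker}\,P_{j-1}$, $\mathrm{im}\,P_{j-2}$, $\mathrm{ker}\,P_{j-3},\ldots$ respectively (using that if $u\in\mathrm{ker}\,P_k$ then $(1-P_k)u=u$, and symmetrically for $\mathrm{im}\,P_k$). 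This yields the stated formula for $\mathrm{im}\,A_{[j-1]}$, once the indices are shifted by one, noting that the roles of $\mathrm{ker}$ and $\mathrm{im}$ alternate consistently.

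Now I would line up the two formulas slot by slot. In position $j$ (the ``top'' summand $H(g^j)$), the kernel contributes $\mathrm{ker}\,\bigl(A_j|_{H(g^j,P_j)}\bigr)$ while the image contributes $\mathrm{im}\,\bigl(A_{j-1}|_{H(g^{j-1},P_{j-1})}\bigr)$, and their quotient is $\scrH_j(\frakA_\frakP)$. In every lower position, however, the alternation of $(1-P_k)$ and $P_k$ in \eqref{eq:a_jlift} has been arranged precisely so that the kernel- and image- conditions are \emph{identical}: slot $H(g^{j-1})$ gives $\mathrm{ker}\,P_{j-1}$ on both sides, slot $H(g^{j-2})$ gives $\mathrm{im}\,P_{j-2}$ on both sides, and so on. Hence the quotient $\mathrm{ker}\,A_{[j]}/\mathrm{im}\,A_{[j-1]}$ collapses to the first summand and one obtains $\scrH_j(\frakA_\frakP^\wedge)\cong\scrH_j(\frakA_\frakP)$.

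The consequences are then immediate: $\mathrm{dim}\,\scrH_j(\frakA_\frakP)<\infty$ if and only if $\mathrm{dim}\,\scrH_j(\frakA_\frakP^\wedge)<\infty$, so one complex is Fredholm precisely when the other is; similarly, all cohomologies vanish on one side iff they vanish on the other, giving the claim about exactness. There is no real obstacle here, only careful bookkeeping of the alternating pattern of $P_k$ and $1-P_k$; the only point that deserves a remark is that, because $P_k$ is an idempotent in $\scrL(H(g^k))$, the complementary subspaces $\mathrm{im}\,P_k$ and $\mathrm{ker}\,P_k$ are closed and the direct-sum decompositions used above are genuine topological decompositions of $H(g^k)$.
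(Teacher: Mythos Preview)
Your proof is correct and follows essentially the same approach as the paper. The paper organizes the argument slightly differently by introducing the auxiliary map $T_j\colon H(g^j)\to H(g^{j+1})\oplus H(g^j)$, $T_ju=(A_ju,(1-P_j)u)$, and then showing separately that $\mathrm{ker}\,T_j=\mathrm{ker}(A_j|_{H(g^j,P_j)})$ and $\mathrm{im}\,T_j=\mathrm{im}(A_j|_{H(g^j,P_j)})\oplus\mathrm{ker}\,P_j$ via the decomposition $u=P_ju+(1-P_j)u$; this makes explicit the point that the first two output components can be prescribed \emph{independently}, which in your argument is left implicit when you say the first component ``sweeps out'' $\mathrm{im}\,A_{j-1}$ and the second ``sweeps out'' $\mathrm{ker}\,P_{j-1}$. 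Your closing remark about the topological direct-sum decomposition $H(g^k)=\mathrm{im}\,P_k\oplus\mathrm{ker}\,P_k$ is exactly what is needed to justify that step, so there is no genuine gap.
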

\begin{proof}
Let us define the map 
 $$T_j\colon H(g^j)\to H(g^{j+1})\oplus H(g^j), \qquad T_ju=(A_ju,(1-P_j)u).$$  
Then it is clear that 
\begin{align*}
 \mathrm{ker}\,A_{[j]}
 &=\mathrm{ker}\,T_j\oplus \mathrm{ker}\,P_{j-1}\oplus \mathrm{im}\,P_{j-2}\oplus \mathrm{ker}\,P_{j-3}\oplus\ldots,\\
 \mathrm{im}\,A_{[j]}
 &=\mathrm{im}\,T_j\oplus \mathrm{im}\,P_{j-1}\oplus \mathrm{ker}\,P_{j-2}\oplus \mathrm{im}\,P_{j-3}\oplus\ldots.
\end{align*}
Now observe that $T_ju=0$ if, and only if, $u\in\mathrm{ker}\,(1-P_j)=H(g^j,P_j)$ and $A_ju=0$. This shows
 $$\mathrm{ker}\,T_j=\mathrm{ker}\,\big(A_j:H(g^j,P_j)\to H(g^{j+1},P_{j+1})\big).$$
Moreover, writing $u=v+w$ with $v\in H(g^j,P_j)$ and $w\in\mathrm{ker}\,P_j$, we obtain $T_ju=(A_jv,w)$. This shows 
 $$\mathrm{im}\,T_j=\mathrm{im}\,\big(A_j:H(g^j,P_j)\to H(g^{j+1},P_{j+1})\big)\oplus \mathrm{ker}\,P_j$$
and completes the proof. 
\end{proof}

%%%%%%%%%%%%%%%%%%%%%%%%%%%%%%%%%%%%%%%%%%%%%%%%%%
\subsection{Fredholmness, parametrices and ellipticity of Toeplitz complexes}\label{sec:06.2} 

The next theorem shows that a parametrix of the lift induces a parametrix of the original complex. 

\begin{theorem}\label{thm:lift-param}
Let $\frakA_\frakP^\wedge$ be the lift of $\frakA_\frakP$ as described above. 
If $\frakA_\frakP^\wedge$ has a parametrix in $L^\bullet$ then 
$\frakA_\frakP$ has a parametrix in $T^\bullet$. 
\end{theorem}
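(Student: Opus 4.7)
The plan is to obtain each $B_j$ by extracting from $B_{[j]}$ its top-left block in the block-matrix decomposition provided by Section \ref{sec:04.1.1} and then sandwiching it between the appropriate projections. Denote the $(0,0)$-entry of $B_{[j]}$ by $C_j:=(B_{[j]})_{00}\in L^0((g^{j+1},g^j))$ and set
$$B_j:=P_j\,C_j\,P_{j+1},\qquad j\ge 0,$$
together with $B_j:=0$ for $j<0$. By construction $B_j$ belongs to $L^0(\frakg_j^{(-1)};P_{j+1},P_j)$.

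The first step of the verification is to compute the $(0,0)$-block of the given parametrix identity
$$A_{[j-1]}B_{[j-1]}+B_{[j]}A_{[j]}-1\in L^{-\infty}((g^{[j]},g^{[j]})).$$
Inspecting formula \eqref{eq:a_jlift}, the first row of $A_{[j-1]}$ has only the single nonzero entry $A_{j-1}$ at position $(0,0)$, while the first column of $A_{[j]}$ carries $A_j$ at $(0,0)$ and $(1-P_j)$ at $(1,0)$. Taking the $(0,0)$-block of the identity therefore yields
$$A_{j-1}\,C_{j-1}+C_j\,A_j+(B_{[j]})_{01}(1-P_j)-1\in L^{-\infty}((g^j,g^j)).$$

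The second step is to compress this relation by multiplying by $P_j$ both from the left and from the right. The smoothing remainder then lies in $P_j L^{-\infty}((g^j,g^j))P_j=L^{-\infty}((g^j,g^j);P_j,P_j)$, the middle term vanishes because $(1-P_j)P_j=0$, and the identities $P_jA_{j-1}=A_{j-1}=A_{j-1}P_{j-1}$ together with $P_{j+1}A_j=A_j=A_jP_j$ (all encoding that $A_k\in L^0(\frakg_k;P_k,P_{k+1})$) give $P_jA_{j-1}C_{j-1}P_j=A_{j-1}B_{j-1}$ and $P_jC_jA_jP_j=B_jA_j$. Hence the parametrix identity
$$A_{j-1}B_{j-1}+B_jA_j-P_j\in L^{-\infty}((g^j,g^j);P_j,P_j)$$
follows. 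The boundary case $j=0$ works analogously, with $A_{[-1]}$ absent and the first block equation reducing to $C_0A_0+(B_{[0]})_{01}(1-P_0)-1\in L^{-\infty}$, which sandwiches to $B_0A_0-P_0\in L^{-\infty}((g^0,g^0);P_0,P_0)$. I do not foresee any substantive obstacle; the argument is purely algebraic, relying only on the block-matrix property, the definition of the Toeplitz operator spaces, and the projection invariances of the $A_k$.
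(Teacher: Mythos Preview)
Your proof is correct and follows essentially the same approach as the paper: extract the top-left block of the parametrix identity for the lifted complex, then compress with the projections $P_j$ on both sides. The paper uses a slightly different indexing convention for the blocks (writing $B^{[j]}_{j+1,j}$ for your $C_j=(B_{[j]})_{00}$), but the computation and the use of the identities $P_jA_{j-1}=A_{j-1}=A_{j-1}P_{j-1}$ and $P_{j+1}A_j=A_j=A_jP_j$ are identical.
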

\begin{proof}
Let $\frakA_\frakP^\wedge$ have a parametrix $\frakB$ in $L^\bullet$, made up of the operators
$B_{[j]}\in L^0(\frakg_{[j]}^{(-1)})=L^0((g^{[j+1]},g^{[j]}))$. Let us represent $B_{[j]}$ as a block-matrix, 
 $$B_{[j]}=
     \begin{pmatrix}
      B^{[j]}_{j+1,j} & B^{[j]}_{j,j} & B^{[j]}_{j-1,j} & \cdots & B^{[j]}_{0,j} \\
      \vdots & \vdots & \vdots &  & \vdots \\
      B^{[j]}_{j+1,0} & B^{[j]}_{j,0} & B^{[j]}_{j-1,0} & \cdots & B^{[j]}_{0,0}
     \end{pmatrix},
     \qquad B_{k,\ell}^{[j]}\in L^0((g^k,g^\ell)).
 $$
Since $\frakB$ is a parametrix to $\frakA_\frakP^\wedge$, we have 
\begin{equation}\label{eq:A}
 A_{[j-1]}B_{[j-1]}+B_{[j]}A_{[j]}=1+C_{[j]},\qquad C_{[j]}\in L^{-\infty}((g^{[j]},g^{[j]})).
\end{equation}
Similarly as before, let us write $C_{[j]}=\big(C^{[j]}_{k,\ell}\big)$ with 
$C^{[j]}_{k,\ell}\in L^{-\infty}((g^k,g^\ell))$. Inserting in \eqref{eq:A} the block-matrix representations 
and looking only to the upper left corners, we find that 
 $$A_{j-1}B_{j,j-1}^{[j-1]}+B_{j+1,j}^{[j]}A_j+B_{j,j}^{[j]}(1-P_j)=1+C_{j,j}^{[j]}.$$
Multiplying this equation from the left and the right with $P_j$ and defining 
 $$B_j:=P_jB_{j+1,j}^{[j]}P_{j+1}\in T^0((g^{j+1},g^j);P_{j+1},P_j)$$
we find 
 $$A_{j-1}B_{j-1}+B_jA_j-P_j\in T^{-\infty}((g^{j},g^j);P_{j},P_j),\qquad j=0,1,2,\ldots$$
Thus the sequence of the $B_j$ is a parametrix in $T^\bullet$ of $\frakA_\frakP$. 
\end{proof}

In case the parametrix of $\frakA_\frakP^\wedge$ is also a complex, the resulting parametrix for $\frakA_\frakP$ 
will, in general, not be a complex. We must leave it as an open question whether $($or under which conditions$)$ 
it is possible to find a parametrix of $\frakA_\frakP$ which is a complex. 

\begin{theorem}\label{thm:T-complex-main}
Let $L^\bullet$ have both the block-matrix property and the Fredholm property. 
For an at most semi-infinite complex $\frakA_\frakP$ in $T^\bullet$ as in \eqref{eq:T-complex2}, the following 
are equivalent$:$ 
\begin{itemize}
 \item[a$)$] $\frakA_\frakP$ is a Fredholm complex. 
 \item[b$)$]  $\frakA_\frakP$ has a parametrix in $T^\bullet$  $($in the sense of Definition $\mathrm{\ref{def:param_in_T}})$.
\end{itemize}
If $L^\bullet=L^\bullet_\cl$ is classical, these properties are equivalent to 
\begin{itemize}
 \item[c$)$] $\frakA_\frakP$ is an elliptic complex $($in the sense of Definition $\mathrm{\ref{def:ellipticity_in_T}})$. 
\end{itemize}
\end{theorem}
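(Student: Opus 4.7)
The plan is to reduce everything to Section 5 via the lift $\frakA_\frakP \rightsquigarrow \frakA_\frakP^\wedge$ of Section \ref{sec:06.1}, using Proposition \ref{prop:lift} (cohomology is preserved by lifting) and Theorem \ref{thm:lift-param} (parametrices descend from the lift).

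First I would prove the easy direction (b)$\Rightarrow$(a) directly. If $\{B_j\}$ is a parametrix in $T^\bullet$, then for every $j$ the operator $A_{j-1}B_{j-1}+B_jA_j-P_j$ lies in $T^{-\infty}$, hence is compact on $H(g^j;P_j)$. Since $P_j$ acts as the identity on that space, Theorem \ref{thm:complex-basics} applies and $\frakA_\frakP$ is Fredholm. For (a)$\Rightarrow$(b), I would observe that by Proposition \ref{prop:lift} the lift $\frakA_\frakP^\wedge$ is a Fredholm complex in $L^\bullet$; the block-matrix property makes the lift well-defined, and the Fredholm property of $L^\bullet$ together with Proposition \ref{prop:L-complex-param} yields a parametrix of $\frakA_\frakP^\wedge$ in $L^\bullet$. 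Theorem \ref{thm:lift-param} then produces the desired parametrix of $\frakA_\frakP$ in $T^\bullet$.

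For the classical case I would establish (c)$\Leftrightarrow$(a) by passing to the principal symbols at each level $\ell$. The symbol $\sigma_\ell(A_{[j]})$ has the same block-subdiagonal structure as $A_{[j]}$ itself, namely $\mathrm{diag}(\sigma_\ell(A_j),0,\ldots)+\mathrm{subdiag}(1-\sigma_\ell(P_j),\sigma_\ell(P_{j-1}),\ldots)$, acting on the finite direct sums $E_\ell(g^{[j]})=E_\ell(g^j)\oplus\cdots\oplus E_\ell(g^0)$. Because the proof of Proposition \ref{prop:lift} is purely Hilbert-space-theoretic (decomposition along the projections $P_k$), it applies fibrewise to this symbolic situation and identifies the cohomology of $\sigma_\ell(\frakA_\frakP^\wedge)$ at each fibre with that of the Toeplitz symbol complex $\sigma_\ell(\frakA_\frakP)$ on the subbundles $E_\ell(g^j,P_j)$. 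Hence $\frakA_\frakP$ is elliptic if and only if $\frakA_\frakP^\wedge$ is elliptic in $L^\bullet_\cl$. Now Theorem \ref{thm:L-complex-classical}, together with the assumed Fredholm property, gives the equivalence of ellipticity and Fredholmness for $\frakA_\frakP^\wedge$, and a final application of Proposition \ref{prop:lift} transports Fredholmness back to $\frakA_\frakP$, closing the loop with (a).

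The only real point requiring care is the symbol-level analogue of Proposition \ref{prop:lift}: one must verify that, at each point of the base, the kernel/image decomposition produced for $A_{[j]}$ extends verbatim when $A_j$ and $P_k$ are replaced by $\sigma_\ell(A_j)$ and $\sigma_\ell(P_k)$. This is a direct rereading of the proof of Proposition \ref{prop:lift}, since that argument only uses that $\sigma_\ell(P_k)$ is a projection and that $(1-\sigma_\ell(P_{j+1}))\sigma_\ell(A_j)=0$, both of which follow from $P_k^2=P_k$ and the mapping conditions defining $T^\bullet$. Everything else in the theorem is a formal consequence of Section 5 applied to the lift.
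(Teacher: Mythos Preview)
Your proposal is correct and follows essentially the same route as the paper: both argue (b)$\Rightarrow$(a) directly from compactness of smoothing operators, establish (a)$\Rightarrow$(b) by lifting to $\frakA_\frakP^\wedge$, invoking Proposition~\ref{prop:L-complex-param} for a parametrix in $L^\bullet$, and descending via Theorem~\ref{thm:lift-param}, and handle (a)$\Leftrightarrow$(c) by observing that $\sigma_\ell(\frakA_\frakP^\wedge)$ is the lift of $\sigma_\ell(\frakA_\frakP)$ so that Proposition~\ref{prop:lift} applies fibrewise, then appealing to Theorem~\ref{thm:L-complex-classical}. Your explicit remark that the symbol-level use of Proposition~\ref{prop:lift} only needs $\sigma_\ell(P_k)$ to be a projection and $(1-\sigma_\ell(P_{j+1}))\sigma_\ell(A_j)=0$ is a welcome clarification of a point the paper leaves implicit.
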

\begin{proof}
Clearly, b$)$ implies a$)$. 
If a$)$ holds, the lifted complex $\frakA_\frakP^\wedge$ is a Fredholm complex. According to Proposition 
\ref{prop:L-complex-param} it has a parametrix. By Theorem \ref{thm:lift-param} we thus find a parametrix in 
$T^\bullet$ of $\frakA_\frakP$. Thus a$)$ implies b$)$.

Now let $L^\bullet$ be classical. If $\frakA_\frakP^\wedge$ is the lifted complex, then the family of complexes 
$\sigma_\ell(\frakA_\frakP^\wedge)$ in the sense 
of \eqref{eq:symbol-complex} is the lift of the family of complexes $\sigma_\ell(\frakA_\frakP)$ from 
\eqref{eq:symbol-complex-toeplitz}. Thus, due to Theorem \ref{thm:lift-param} $($applied in each fibre$)$, 
$\frakA_\frakP$ is elliptic if, and only if, $\frakA_\frakP^\wedge$ is. 
By Theorem \ref{thm:L-complex-classical}, the latter is equivalent to the 
Fredholmness of $\frakA_\frakP^\wedge$ which, again by Theorem \ref{thm:lift-param}, is equivalent to the 
Fredholmness of $\frakA_\frakP$. This shows the equivalence of a$)$ and c$)$. 
\end{proof}

Now we generalize Theorem \ref{thm:L-lift1} to complexes in Toeplitz algebras. 

\begin{theorem}\label{thm:L-lift2}
Assume that $L^\bullet_\cl$ has both the block-matrix property and the extended Fredholm property. 
Let $N\in\nz$ and 
$A_j\in T^0(\frakg_j;P_j,P_{j+1})$, $\frakg_j=(g^j,g^{j+1})$, $j=0,\ldots, N$, 
be such that the associated sequences of principal symbols form exact families of complexes 
\begin{equation*}
 \;0\lra E_\ell(g^0,P_0)\xrightarrow{\sigma_\ell(A_0;P_0,P_1)}
 \ldots \xrightarrow{\sigma_\ell(A_N;P_N,P_{N+1})}E_\ell(g^{N+1},P_{N+1})\lra 0, 
\end{equation*}
$\ell=1,\ldots,n$. Then there exist operators $\wt{A}_j\in T^{0}(\frakg_j;P_j,P_{j+1})$, 
$j=0,\ldots,N$, with $\sigma(\wt{A}_j;P_j,P_{j+1})=\sigma(A_j;P_j,P_{j+1})$ and  such that 
 $$\wt{\frakA}_\frakP:\;0\lra H(g^0,P_0)\xrightarrow{\wt{A}_0}H(g^1,P_1)
     \ldots\xrightarrow{\wt{A}_N}H(g^{N+1},P_{N+1})\lra0$$
is a complex. In case $A_{j+1}A_{j}$ is smoothing for every $j$, the operators $\wt{A}_j$ can be chosen 
in such a way that $\wt{A}_j-A_j$ is smoothing for every $j$. 
\end{theorem}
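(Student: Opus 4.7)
The strategy is to reduce Theorem \ref{thm:L-lift2} to Theorem \ref{thm:L-lift1} via the lifting construction of Section \ref{sec:06.1}, combined with an iterative backward extraction. First, form the lifted operators $A_{[j]} \in L^0(\frakg_{[j]})$ via \eqref{eq:a_jlift}. A direct block-matrix computation shows that $A_{[j+1]} A_{[j]}$ has only its top-left entry nontrivial, equal to $A_{j+1} A_j$; consequently $\sigma(A_{[j+1]})\sigma(A_{[j]}) = 0$ (because $\sigma(A_{j+1})\sigma(A_j) = 0$ by the Toeplitz complex property), and $A_{[j+1]} A_{[j]}$ is smoothing whenever $A_{j+1} A_j$ is. Applying the lifting construction fibrewise at the symbol level gives the lift of $\sigma_\ell(\frakA_\frakP)$; by Proposition \ref{prop:lift} its cohomology coincides with that of the Toeplitz symbol complex, hence is trivial by hypothesis. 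Thus $\{A_{[j]}\}_{j=0}^N$ satisfies the hypotheses of Theorem \ref{thm:L-lift1}.

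Rather than invoking Theorem \ref{thm:L-lift1} as a black box, we carry out its backward iterative construction directly on the lift, checking that a Toeplitz-type block structure is preserved at each step. Starting with $\wt A_{[N]} := A_{[N]}$ and $\wt A_N := A_N$, at step $k$ define
\[
 \Pi_{[k+1]} := 1 - \wt A_{[k+1]}^*\, D_{[k+2]}\, \wt A_{[k+1]},
\]
the orthogonal projection in $H(g^{[k+1]})$ onto $\mathrm{ker}(\wt A_{[k+1]})$, where $D_{[k+2]}$ is a parametrix of the Laplacian of the partially-modified complex. The inductive hypothesis is that $\wt A_{[k+1]}$ has the same block structure as $A_{[k+1]}$ except that its top-left entry is a Toeplitz operator $\wt A_{k+1} \in T^0(\frakg_{k+1}; P_{k+1}, P_{k+2})$ of the same principal symbol as $A_{k+1}$. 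Under this hypothesis, Proposition \ref{prop:lift} applies to $\wt A_{[k+1]}$ and yields
\[
 \mathrm{ker}(\wt A_{[k+1]}) = \mathrm{ker}(\wt A_{k+1}) \,\oplus\, \mathrm{ker}(P_k) \,\oplus\, \mathrm{im}(P_{k-1}) \,\oplus\, \cdots,
\]
a decomposition into orthogonal subspaces sitting in distinct block components of $H(g^{[k+1]})$. Consequently $\Pi_{[k+1]}$ is block-diagonal, with top-left block $(\Pi_{[k+1]})_{(k+1,k+1)}$ equal to the orthogonal projection in $H(g^{k+1})$ onto $\mathrm{ker}(\wt A_{k+1}) \subset H(g^{k+1}, P_{k+1})$. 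Setting $\wt A_{[k]} := \Pi_{[k+1]} A_{[k]}$, the subdiagonal entries of $A_{[k]}$ are preserved (since e.g.\ $\mathrm{im}(1-P_k) = \mathrm{ker}(P_k)$ is fixed by $(\Pi_{[k+1]})_{(k,k)}$), while its top-left entry becomes
\[
 \wt A_k := (\Pi_{[k+1]})_{(k+1,k+1)}\, A_k,
\]
which automatically lies in $T^0(\frakg_k; P_k, P_{k+1})$ because $\mathrm{im}\,(\Pi_{[k+1]})_{(k+1,k+1)} \subset H(g^{k+1}, P_{k+1})$ and $A_k = A_k P_k$. The identities $\wt A_{k+1} \wt A_k = 0$ and $\sigma(\wt A_k; P_k, P_{k+1}) = \sigma(A_k; P_k, P_{k+1})$ follow directly, the former since $(\Pi_{[k+1]})_{(k+1,k+1)}$ maps into $\mathrm{ker}(\wt A_{k+1})$ and the latter from the symbol-level exactness. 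The smoothing refinement is inherited from the corresponding assertion of Theorem \ref{thm:L-lift1}.

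The main technical obstacle is the propagation of the inductive hypothesis on the block structure of $\wt A_{[k]}$: one must confirm at each step that the modified lift still meets the form required for Proposition \ref{prop:lift} to apply at the next step. This reduces to two verifications---that $\Pi_{[k+1]}$ acts as the identity on the images of the subdiagonal entries of $A_{[k]}$, and that its top-left block is precisely the orthogonal projection onto the Toeplitz kernel of $\wt A_{k+1}$---both consequences of the projection identity $P_k^2 = P_k$ (which gives $P_k(1-P_k) = 0$) together with the Toeplitz character of $\wt A_{k+1}$ guaranteed by the induction hypothesis.
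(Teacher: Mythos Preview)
Your proof is correct and follows essentially the same route as the paper: lift to $L^\bullet$ via \eqref{eq:a_jlift}, invoke Proposition \ref{prop:lift} at the symbol level to obtain exactness of the lifted symbol complexes, and then run the backward iteration of Theorem \ref{thm:L-lift1} on the lift while tracking the upper-left block to extract the Toeplitz operators $\wt A_k$. Your observation that $\Pi_{[k+1]}$ is block-diagonal (because $\ker\wt A_{[k+1]}$ decomposes into subspaces lying in distinct orthogonal summands of $H(g^{[k+1]})$) is a mild streamlining of the paper's argument, which instead carries along the off-diagonal term $\Pi_{[N]}^{12}(1-P_{N-1})$ and kills it only after sandwiching with $P_N$ and $P_{N-1}$; both routes lead to the same operator $\wt A_k = (\Pi_{[k+1]})_{(k+1,k+1)} A_k$.
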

\begin{proof}
Consider the finite complex as a semi-infinite one, i.e., for $j>N$ we let $g^j=g$ be the weight such that 
$H(g)=\{0\}$ and denote by $A_j$ be the zero operator acting in $\{0\}$. Then we let 
 $$A_{[j]}\in L^0(\frakg_{[j]}),\qquad \frakg_{[j]}:=(g^{[j]},g^{[j+1]}),$$
as defined in \eqref{eq:a_jlift}.  This defines a series of operators 
$A_{[0]},A_{[1]},A_{[2]},\ldots$ which, in general, is infinite, i.e., the operators $A_{[j]}$ with $j>N$ 
need not vanish. However, by construction, we have that 
\begin{equation}\label{eq:lift2}
 A_{[j+1]}A_{[j]}=0\qquad\forall\;j\ge N.
\end{equation}
Moreover, the associated families of complexes of principal symbols are exact families due to 
Proposition \ref{prop:lift}. We now modify the operator $A_{[N-1]}$ using the procedure described in the 
proof of Theorem \ref{thm:L-lift1} $($due to \eqref{eq:lift2} the operators $A_{[j]}$ with $j\ge N$ do not 
need to be modified$)$. 

Thus let $\Pi_{[N]}$ be the orthogonal projection in $H(g^{[N]})$ onto 
\begin{align*}
 \mathrm{ker}\,A_{[N]}=&  
 \mathrm{ker}\,\big(A_N:H(g^N,P_N)\to H(g^{N+1},P_{N+1})\big)\oplus \\
 &\oplus \mathrm{ker}\,P_{N-1}\oplus \mathrm{im}\,P_{N-2}\oplus \mathrm{ker}\,P_{N-3}\oplus\ldots
\end{align*}
and $\wt{A}_{[N-1]}:=\Pi_{[N]}A_{[N-1]}=A_{[N-1]}+R_{[N-1]}$ with $R_{[N-1]}\in L^0(\frakg_{[N-1]})$ 
having vanishing principal symbol. If we write $\Pi_{[N]}$ in block-matrix form, the entry 
$\Pi_{[N]}^{11}\in L^0(\frakg_N)$ in the upper left corner is the orthogonal projection of $H(g^N)$ onto 
$\mathrm{ker}\,\big(A_N:H(g^N,P_N)\to H(g^{N+1},P_{N+1})\big)$. Thus 
$\Pi_N:=P_N\Pi_{[N]}^{11}P_N\in T^0(\frakg_N;P_N,P_N)$ is the orthogonal projection of $H(g_N,P_N)$ onto 
the same kernel. Comparing the upper left corners of $A_{[N-1]}$ and $\wt{A}_{[N-1]}=\Pi_{[N]}A_{[N-1]}$ 
we find that 
 $$\Big(\Pi_{[N]}^{11}A_{N-1}+\Pi_{[N]}^{12}(1-P_{N-1})\Big)-A_{N-1}=R^{11}_{[N-1]}.$$ 
Multiplying by $P_N$ from the left and by $P_{N-1}$ from the right yields that 
$A_{N-1}$ differs from $\wt{A}_{N-1}:=\Pi_N A_{N-1}\in T^0(\frakg_N;P_N,P_N)$ by 
 $$R_{N-1}:=P_NR^{11}_{[N-1]}P_{N-1}\in T^0(\frakg_N;P_{N-1},P_N).$$ 
Moreover, $R_{N-1}$ has vanishing symbol $\sigma(R_{N-1};P_{N-1},P_N)$ and $A_N\wt{A}_{N-1}=0$, 
since $\Pi_N$ maps into the kernel of $A_N$. 

Now we replace $A_{N-1}$ by $\wt{A}_{N-1}$ and repeat this procedure to modify $A_{N-2}$, and so on 
until modification of $A_0$. 
\end{proof}

%%%%%%%%%%%%%%%%%%%%%%%%%%%%%%%%%%%%%%%%%%%%%%%%%%
\subsection{Complexes on manifolds with boundary revisited}\label{sec:06.4}

Let us now apply our results to complexes on manifolds with boundary, i.e., to complexes in Boutet de Monvel's 
algebra and its APS version. In particular, we shall provide details we already have made use of in 
Section \ref{sec:03.2} on boundary value problems for complexes.  

In the following we work with the operators  
 $$\scrA_j\in\scrB^{\mu_j,d_j}(\Omega;(E_j,F_j;P_j),(E_{j+1},F_{j+1};P_{j+1})),\qquad j=0,\ldots,n.$$

%%%%%%%%%%%%%%%%%%%%%%%%%%%%%%%%%%%%%%%%%%%%%%%%%%
\subsubsection{Complexes in Boutet's algebra with APS type conditions}\label{sec:06.4.1}

Assume $\scrA_{j+1}\scrA_j=0$ for every $j$. For convenience we introduce the notation 
 $$\bfH^s_j:=H^s(\Omega,E_j)\oplus H^{s}(\partial\Omega,F_j;P_j)$$
and the numbers $\nu_j:=\mu_0+\ldots+\mu_j$. 
Then we obtain finite complexes 
\begin{equation}\label{eq:complex_bdm_rev}
     \frakA_\frakP:0\lra \bfH^s_0\xrightarrow{\scrA_{0}}
     \bfH^{s-\nu_0}_1\xrightarrow{\scrA_1}
     \bfH^{s-\nu_1}_2\xrightarrow{\scrA_2}
     \ldots\xrightarrow{\scrA_n}
     \bfH^{s-\nu_{n}}_{n+1}\lra0
\end{equation}
for every integer $s\ge s_{\min}$  with 
 $$s_{\min}:=\max\big\{\nu_j,d_j+\nu_{j-1}\mid j=0,\ldots,n\big\} \qquad (\text{with $\nu_{-1}:=0$}).$$

The complex $\frakA_\frakP$ is called elliptic if both associated families of complexes $\sigma_\psi(\frakA_\frakP)$ and 
$\sigma_\partial(\frakA_\frakP)$, made up of the associated principal symbols and principal boundary symbols, 
respectively, are exact. In fact, ellipticity is independent of the index $s$. 

\begin{theorem}\label{thm:complex_bdm_rev}
The following statements are equivalent$:$ 
\begin{itemize}
 \item[a$)$]  $\frakA_\frakP$ is elliptic. 
 \item[b$)$] $\frakA_\frakP$ is a Fredholm complex for some $s\ge s_{\min}$. 
 \item[c$)$] $\frakA_\frakP$ is a Fredholm complex for all $s\ge s_{\min}$. 
\end{itemize}
In this case, $\frakA_\frakP$ has a parametrix made up of operators belonging to the APS-version of  
Boutet de Monvel's algebra. Moreover, the index of the complex does not depend on $s$.  
\end{theorem}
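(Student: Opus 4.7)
My plan is to deduce this from Theorem \ref{thm:T-complex-main} applied in the Toeplitz algebra built on top of Boutet de Monvel's algebra. First I would verify that the standard algebra $\scrB^{\bullet,0}$ fits the abstract framework of Section \ref{sec:04.1}: it has the block-matrix property by its very construction as $2\times 2$ block operators, while the Fredholm property and the classical structure with principal symbol $(\sigma_\psi,\sigma_\partial)$ satisfying (i)--(v) of Section \ref{sec:04.1.2} are provided by the classical Boutet calculus (Theorem \ref{thm:bdm-main} with all projections set to the identity). Its Toeplitz extension in the sense of Section \ref{sec:04.2}, with block projections $\scrP_j=\mathrm{diag}(1,P_j)$, coincides precisely with the APS-version $\scrB^{\bullet,0}(\,\cdot\,;(\cdot;P),(\cdot;P'))$, so $\frakA_\frakP$ is a complex in $T^\bullet_\cl$.

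Next I would reduce to the case where all $\scrA_j$ have order and type zero and act between $L^2$-spaces, using the order-reduction trick of Section \ref{sec:03.2}. Choose invertible reductions $\Lambda^m_j\in\scrB^{m,0}(\Omega;E_j,E_j)$ and $\lambda^r_j\in L^r_\cl(\pO;F_j,F_j)$ and set $T_j:=\mathrm{diag}(\Lambda^{s-\nu_{j-1}}_j,\lambda^{s-\nu_{j-1}}_j)$, which gives a Hilbert space isomorphism $\bfH^{s-\nu_{j-1}}_j\to\bfH^0_j$. Then $\wt{\scrA}_j:=T_{j+1}\scrA_jT_j^{-1}$ lies in $\scrB^{0,0}$ with transported projections $\wt P_j:=\lambda^{s-\nu_{j-1}}_jP_j\lambda^{\nu_{j-1}-s}_j$, yielding a complex $\wt{\frakA}_{\wt\frakP}$ of order-zero APS operators. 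The $T_j$ realize $\wt{\frakA}_{\wt\frakP}$ as an isomorphic image of $\frakA_\frakP$ in the sense of Remark \ref{rem:isomorphism}, so Fredholmness and cohomology transfer; and by multiplicativity of the principal symbols, so does ellipticity.

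Then I would apply Theorem \ref{thm:T-complex-main} to $\wt{\frakA}_{\wt\frakP}$ (viewed as a semi-infinite complex by appending zero weights on both sides). This yields the equivalence of a), b), c) at the reference level, together with a parametrix in the Toeplitz algebra; conjugation by the $T_j^{-1}$ produces a parametrix of the original $\frakA_\frakP$ in the APS-algebra and delivers the equivalences at the given $s$. Since ellipticity is intrinsic to the principal symbols and does not depend on $s$, the three conditions are simultaneously true or false at every admissible level. For the index: an APS parametrix maps continuously on Sobolev spaces of all orders, so an elliptic regularity bootstrap applied to the Laplacians $\Delta_j$ shows that $\mathrm{ker}\,\Delta_j\subset\bfH^{s-\nu_{j-1}}_j$ consists of smooth sections independent of $s$, making the cohomology spaces and hence the index $s$-independent.

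The main technical obstacle I anticipate is in the order-reduction step: ensuring that the conjugated operator $\wt P_j$ really is a zero-order classical pseudodifferential projection, so that $\wt{\frakA}_{\wt\frakP}$ actually lives in the APS calculus and $T_j$ is compatible with the Toeplitz structure. This will be handled by selecting the $\lambda^r_j$ whose principal symbols commute with $\sigma_\psi^0(P_j)$ and then invoking a projection-construction argument of the type used in the appendix of \cite{Schu37} to correct $\wt P_j$ by a smoothing perturbation into a genuine projection with the prescribed principal symbol, without affecting the ellipticity of the complex.
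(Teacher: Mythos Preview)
Your overall strategy---order reductions to pass to order and type zero, then apply Theorem~\ref{thm:T-complex-main}, then conjugate back---is exactly what the paper does. Two remarks, though.

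First, the ``main technical obstacle'' you anticipate is not one: if $P_j^2=P_j$ and $\lambda_j^r$ are invertible with $(\lambda_j^r)^{-1}=\lambda_j^{-r}$, then
\[
\wt P_j^2=\lambda_j^{s-\nu_{j-1}}P_j\lambda_j^{\nu_{j-1}-s}\lambda_j^{s-\nu_{j-1}}P_j\lambda_j^{\nu_{j-1}-s}=\lambda_j^{s-\nu_{j-1}}P_j^2\lambda_j^{\nu_{j-1}-s}=\wt P_j,
\]
so $\wt P_j$ is automatically a zero-order classical projection. No commutation hypothesis on principal symbols and no smoothing correction from \cite{Schu37} is needed; the paper simply writes down $P_j'$ in exactly this form without comment.

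Second, your argument for $s$-independence of the index via the Laplacians $\Delta_j$ is a genuine divergence from the paper and is more delicate than you indicate. The paper stresses at the start of Section~\ref{sec:06} that in the Toeplitz setting adjoints send $T^0(\frakg;P_0,P_1)$ to $T^0(\frakg^{(-1)};P_1^*,P_0^*)$, so unless the $P_j$ are self-adjoint the complex Laplacians need not lie in the APS calculus, and an ``elliptic regularity bootstrap for $\Delta_j$'' is not immediately available. The paper sidesteps this by invoking the lifting $\frakA_\frakP^\wedge$ of Proposition~\ref{prop:lift}, which is a complex in ordinary Boutet de Monvel's algebra (no projections), has the same cohomology as $\frakA_\frakP$, and whose $s$-independence of index is classical (cited from \cite{ReSc}). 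Your instinct that smoothing remainders force cohomology representatives to be smooth is correct, but the cleanest way to implement it is through the parametrix identity $A_{j-1}B_{j-1}+B_jA_j-P_j\in\scrB^{-\infty}$ directly (or via the lift), not through Laplacians.
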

\begin{proof}
We shall make use of order reductions 
 $$\scrR^m_j:=
     \begin{pmatrix}\Lambda^m_j&0\\ 0& \lambda_j^m\end{pmatrix}
     \;\in\;\calB^{m,0}(\Omega;(E_j,F_j),(E_{j},F_{j})),$$
as already described in the discussion following Theorem \ref{thm:complex-main}, i.e., $\scrR_j^m$ 
is invertible with inverse given by $\scrR_j^{-m}$. 

Let $\frakA$ be elliptic. Define 
 $$\scrA^\prime_j=\scrR^{s_{\min}-\nu_j}_{j+1}\,\scrA_j\,\scrR^{\nu_{j-1}-s_{\min}}_{j},
     \qquad 
     P^\prime_j=\lambda_j^{s_{\min}-\nu_{j-1}}\,P_j\,\lambda_j^{\nu_{j-1}-s_{\min}}.$$
Then 
 $$\scrA^\prime_j\;\in\;\calB^{0,0}(\Omega;(E_j,F_j;P^\prime_j),(E_{j+1},F_{j+1};P^\prime_{j+1}))$$
and $\scrA^\prime_{j+1}\scrA^\prime_j=0$ for every $j$, i.e., the $\scrA^\prime_j$ induce a complex 
$\frakA^\prime_{\frakP^\prime}$ in the respective $L^2$-spaces, which remains elliptic.  
By Theorem \ref{thm:T-complex-main} $($with $L^\bullet_\cl=\calB^{\bullet,0}$ as described in Example 
\ref{ex:algebra-ex2}$)$ there exists a parametrix of $\frakA^\prime_{\frakP^\prime}$, made up by operators 
 $$\scrB^\prime_j\;\in\;\calB^{0,0}(\Omega;(E_{j+1},F_{j+1};P^\prime_{j+1}),(E_j,F_j;P^\prime_j)).$$
Then   
 $$\scrB_j:=\scrR^{\nu_{j-1}-s_{\min}}_{j}\,\scrB^\prime_j\,\scrR^{s_{\min}-\nu_j}_{j+1}\;\in\;
     \calB^{0,e_j}(\Omega;(E_{j+1},F_{j+1};P_{j+1}),(E_j,F_j;P_j)),$$
with $e_j:=s_{\min}-\nu_j$ and we obtain that 
 $$\scrA_{j-1}\scrB_{j-1}+\scrB_{j}\scrA_j-1\;\in\;
      \calB^{-\infty,e_j}(\Omega;(E_{j},F_{j};P_{j}),(E_j,F_j;P_j)).$$
Thus the induced operators $\scrB_{j}:\bfH^{s-\nu_j}_{j+1}\to\bfH^{s-\nu_{j-1}}_{j}$ give a parametrix 
of \eqref{eq:complex_bdm_rev} whenever $s\ge s_{\min}$. Summing up, we have verified that a$)$ implies 
c$)$. 

Now assume that b$)$ holds for one $s=s_0$. Similarly as before, we pass to a new Fredholm complex 
$\frakA^\prime_{\frakP^\prime}$ made up by the operators 
$\scrA^\prime_j=\scrR^{s_{0}-\nu_j}_{j+1}\,\scrA_j\,\scrR^{\nu_{j-1}-s_{0}}_{j}$, which have order 
and type $0$. By Theorem \ref{thm:T-complex-main} this complex is elliptic, and hence also the original complex 
$\frakA_\frakP$ is. Hence a$)$ holds. 

It remains to verify the indepence of $s$ of the index. However, this follows from the fact that the index of 
$\frakA_\frakP$ coincides with the index of its lifted complex $\frakA_\frakP^\wedge$ $($cf.\ Proposition 
\ref{prop:lift}$)$. The index of the latter is known to be independent of $s$, see for instance Theorem 2 
on page 283 of \cite{ReSc}.  
\end{proof}

%%%%%%%%%%%%%%%%%%%%%%%%%%%%%%%%%%%%%%%%%%%%%%%%%%
\subsubsection{From principal symbol complexes to complexes of operators}\label{sec:06.4.2}

Theorem \ref{thm:L-lift2} in the present situation takes the following form$:$

\begin{theorem}\label{thm:lift-bdm-rev}
Assume that both the sequence of principal symbols $\sigma_\psi^{\mu_j}(\scrA_j)$ and the sequence  
of principal boundary symbols $\sigma^{\mu_j}_\psi(\scrA_j;P_j,P_{j+1})$ induce exact families of complexes. 
Then there exist operators 
 $$\tilde\scrA_j\in\scrB^{\mu_j,s_{\min}-\nu_{j-1}}(\Omega;(E_j,F_j;P_j),(E_{j+1},F_{j+1};P_{j+1})),\qquad 
     j=0,\ldots,n,$$
with $\tilde\scrA_{j+1}\tilde\scrA_j=0$ for every $j$ and such that   
 $$\scrA_j-\tilde\scrA_j\in\scrB^{\mu_j-1,s_{\min}-\nu_{j-1}}(\Omega;(E_j,F_j;P_j),(E_{j+1},F_{j+1};P_{j+1})).$$ 
\end{theorem}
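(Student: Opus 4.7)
The plan is to reduce to the zero-order/zero-type setting by means of order reductions and then invoke Theorem \ref{thm:L-lift2} applied to the Toeplitz algebra built on top of $\scrB^{\bullet,0}$. The substantive analytic content is entirely absorbed by Theorem \ref{thm:L-lift2}; what remains is bookkeeping with orders and types.

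First I would recall the order reductions $\Lambda^m_j \in \scrB^{m,0}(\Omega; E_j, E_j)$ and invertible $\lambda^m_j \in L^m_\cl(\pO; F_j, F_j)$ used in the proof of Theorem \ref{thm:complex_bdm_rev}, assemble them into $\scrR^m_j := \mathrm{diag}(\Lambda^m_j, \lambda^m_j)$, and set
$$\scrA'_j := \scrR^{s_{\min}-\nu_j}_{j+1}\, \scrA_j\, \scrR^{\nu_{j-1}-s_{\min}}_j, \qquad
P'_j := \lambda^{s_{\min}-\nu_{j-1}}_j\, P_j\, \lambda^{\nu_{j-1}-s_{\min}}_j.$$
Since $s_{\min} \ge d_j + \nu_{j-1}$ and $s_{\min} \ge \nu_j$, the composition rule $d = \max(d_0, d_1 + \mu_0)$ for types shows that $\scrA'_j \in \scrB^{0,0}(\Omega; (E_j, F_j; P'_j), (E_{j+1}, F_{j+1}; P'_{j+1}))$. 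Because the $\scrR^m_j$ are elliptic and invertible in the algebra, their principal symbols and boundary symbols are fibrewise isomorphisms, so the two symbol sequences associated with the $\scrA'_j$ remain exact families of complexes.

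Next I would apply Theorem \ref{thm:L-lift2} to the classical algebra $L^\bullet_\cl = \scrB^{\bullet,0}$ of Example \ref{ex:algebra-ex2}, which has both the block-matrix property and (via Theorem \ref{thm:sandwich}) the extended Fredholm property, with principal symbol map $(\sigma_\psi, \sigma_\partial)$. This produces operators $\tilde\scrA'_j \in \scrB^{0,0}(\Omega; (E_j, F_j; P'_j), (E_{j+1}, F_{j+1}; P'_{j+1}))$ with the same $(\sigma_\psi, \sigma_\partial)$-pair as $\scrA'_j$, satisfying $\tilde\scrA'_{j+1}\tilde\scrA'_j = 0$. In particular, $\scrA'_j - \tilde\scrA'_j$ is of order $-1$ and type $0$.

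Finally, undoing the reduction by
$$\tilde\scrA_j := \scrR^{\nu_j - s_{\min}}_{j+1}\, \tilde\scrA'_j\, \scrR^{s_{\min}-\nu_{j-1}}_j$$
yields, again by the composition rules, an operator of order $\mu_j$ and type $s_{\min}-\nu_{j-1}$ with the prescribed projections, $\tilde\scrA_{j+1}\tilde\scrA_j = 0$, and $\scrA_j - \tilde\scrA_j \in \scrB^{\mu_j - 1, s_{\min}-\nu_{j-1}}$, as claimed. The only delicate point is the type accounting after the outer reduction: the middle operator $\tilde\scrA'_j$ (type $0$) acquires type $s_{\min}-\nu_{j-1}$ from the right factor $\scrR^{s_{\min}-\nu_{j-1}}_j$, and this is preserved rather than increased by the left factor of non-positive order, giving exactly the type claimed in the statement.
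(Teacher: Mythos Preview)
Your proof is correct and follows exactly the paper's own argument: reduce to order and type zero via the order reductions $\scrR^m_j$ and conjugated projections $P'_j$, apply Theorem \ref{thm:L-lift2} in the algebra $\scrB^{\bullet,0}$, and then conjugate back. Your added remarks on the type bookkeeping and on why $\scrB^{\bullet,0}$ has the extended Fredholm property (via Theorem \ref{thm:sandwich}) are accurate and make explicit what the paper leaves implicit.
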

\begin{proof}
Define operators $\scrA^\prime_j$ as in the beginning of the proof of Theorem \ref{thm:complex_bdm_rev}. 
These have order and type $0$ and satisfy the assumptions of the Theorem. Then by 
Theorem \ref{thm:L-lift2} there exist 
 $${\tilde\scrA}^\prime_j\in\scrB^{0,0}(\Omega;(E_j,F_j;P^\prime_j),(E_{j+1},F_{j+1};P^\prime_{j+1}))$$
with ${\tilde\scrA}^\prime_{j+1}{\tilde\scrA}^\prime_j=0$ and 
 $$\scrA^\prime_j-{\tilde\scrA}^\prime_j\in\scrB^{-1,0}(\Omega;(E_j,F_j;P^\prime_j),
     (E_{j+1},F_{j+1};P^\prime_{j+1})).$$ 
By choosing 
$\tilde\scrA_j:=\scrR^{\nu_j-s_{\min}}_{j+1}\,{\tilde\scrA}^\prime_j\,\scrR^{s_{\min}-\nu_{j-1}}_{j}$, 
the claim follows. 
\end{proof}

We conclude this section with a particular variant of Theorem \ref{thm:lift-bdm-rev}, 
which we need for completing the proof of Theorem \ref{thm:complex-main-order-zero}. 

\begin{proposition}\label{prop:lift_bdm_rev}
Let the $\scrA_j$ be as in Theorem $\mathrm{\ref{thm:lift-bdm-rev}}$ of order and type $0$. Furthermore,  
assume that 
$\scrA_j=\begin{pmatrix}A_j&K_j\\0&Q_j\end{pmatrix}$ and that $A_{j+1}A_j=0$ for every $j$. Then the 
$\tilde\scrA_j$ from Theorem $\mathrm{\ref{thm:lift-bdm-rev}}$ can be chosen in the form 
$\tilde\scrA_j=\begin{pmatrix}A_j&\tilde{K}_j\\0&\tilde{Q}_j\end{pmatrix}$. 
\end{proposition}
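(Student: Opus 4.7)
The plan is to rerun the downward induction in the proof of Theorem \ref{thm:L-lift2}, but to exploit the hypothesis $A_{j+1}A_j = 0$ in order to show that the orthogonal projection steps preserve the upper-triangular form and leave the upper-left entries $A_j$ untouched.

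Set $\tilde\scrA_n := \scrA_n$ and suppose inductively that $\tilde\scrA_n,\dots,\tilde\scrA_j$ have been produced in the form $\tilde\scrA_k = \begin{pmatrix}A_k & \tilde K_k\\ 0 & \tilde Q_k\end{pmatrix}$, with the same principal symbol (both $\sigma_\psi$ and $\sigma_\partial$) as $\scrA_k$ and with $\tilde\scrA_{k+1}\tilde\scrA_k = 0$ for $k\ge j$. Since the principal-symbol complexes are unchanged, the Laplacian of the partially constructed complex at position $j$ remains elliptic, and the extended Fredholm property of the APS Boutet de Monvel algebra (Theorem \ref{thm:sandwich} applies, since the smoothing operators of the calculus absorb sandwiches by bounded operators) yields an orthogonal projection $\Pi_j$ inside the algebra onto $\ker\tilde\scrA_j$. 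I would then define $\tilde\scrA_{j-1} := \Pi_j\scrA_{j-1}$. By construction, $\tilde\scrA_j\tilde\scrA_{j-1}=0$; and the difference $\tilde\scrA_{j-1}-\scrA_{j-1}$ factors through $\tilde\scrA_j\scrA_{j-1}$, whose principal symbol vanishes by the exactness of the symbol complex, so both $\sigma_\psi$ and $\sigma_\partial$ are preserved.

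The crucial structural point is that the first column of $\scrA_{j-1}$ is automatically fixed by $\Pi_j$: for every $u$,
\[
\tilde\scrA_j\begin{pmatrix}A_{j-1}u\\ 0\end{pmatrix}
 = \begin{pmatrix}A_j A_{j-1}u\\ 0\end{pmatrix} = 0,
\]
by virtue of the hypothesis $A_jA_{j-1}=0$. Writing $\Pi_j = (\pi^{ik})_{i,k=1,2}$ in block form, this forces $\pi^{11}A_{j-1}=A_{j-1}$ and $\pi^{21}A_{j-1}=0$. Multiplying out $\Pi_j\scrA_{j-1}$ column by column therefore yields
\[
\tilde\scrA_{j-1}=\begin{pmatrix}A_{j-1}&\tilde K_{j-1}\\ 0 & \tilde Q_{j-1}\end{pmatrix},\qquad
\tilde K_{j-1}=\pi^{11}K_{j-1}+\pi^{12}Q_{j-1},\quad \tilde Q_{j-1}=\pi^{21}K_{j-1}+\pi^{22}Q_{j-1},
\]
precisely the desired triangular form with the original $A_{j-1}$ in the upper-left corner. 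All inductive hypotheses are thus reinstated, and after $n$ iterations the operators $\tilde\scrA_0,\dots,\tilde\scrA_n$ are obtained.

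The main technical point, inherited from the proofs of Theorems \ref{thm:L-lift1} and \ref{thm:L-lift2}, is to realize $\Pi_j$ as an operator of the APS Boutet de Monvel algebra rather than as a mere Hilbert-space projection: this requires the extended Fredholm property together with the block-matrix lift from Section \ref{sec:06.1}, applied to the selfadjoint elliptic Laplacian built from $\tilde\scrA_j$ and $\tilde\scrA_{j+1}$. Once $\Pi_j$ is available in the algebra, the preservation of the upper-triangular form is a purely algebraic consequence of $A_jA_{j-1}=0$ and requires no further analysis.
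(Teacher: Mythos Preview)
Your proof is correct and follows essentially the same approach as the paper's: both run the downward induction from the proof of Theorem \ref{thm:L-lift2}, set $\tilde\scrA_{j-1}=\Pi_j\scrA_{j-1}$ with $\Pi_j$ the orthogonal projection onto $\ker\tilde\scrA_j$, and observe that the hypothesis $A_jA_{j-1}=0$ forces $\begin{pmatrix}A_{j-1}u\\0\end{pmatrix}\in\ker\tilde\scrA_j$, so the first column is fixed by $\Pi_j$. Your explicit block computation of $\tilde K_{j-1}$ and $\tilde Q_{j-1}$ is a nice addition the paper omits; conversely, your remark that $\tilde\scrA_{j-1}-\scrA_{j-1}$ ``factors through $\tilde\scrA_j\scrA_{j-1}$'' is slightly loose (more precisely, $\Pi_j-1 = -\tilde\scrA_j^*D_{j+1}\tilde\scrA_j$ in the notation of Theorem \ref{thm:L-lift1}, so the difference is $-\tilde\scrA_j^*D_{j+1}\tilde\scrA_j\scrA_{j-1}$), but the conclusion about the principal symbol is correct.
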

\begin{proof}
To prove this result we recall from the proof of Theorem \ref{thm:L-lift2} that the $\tilde\scrA_j$ are constructed 
by means of an iterative procedure, choosing $\tilde\scrA_n:=\scrA_n$ and then modifying 
$\scrA_{n-1},\ldots,\scrA_0$ one after the other. 
In fact, if $\tilde\scrA_n,\ldots,\tilde\scrA_{k+1}$ are constructed and have the form as stated, then 
$\tilde\scrA_k:=\pi_{k+1}\scrA_k$ with $\pi_{k+1}$ being the orthogonal projection in 
$L^2(\Omega,E_{k+1})\oplus L^2(\pO,F_{k+1};P_{k+1})$ onto the kernel of $\tilde\scrA_{k+1}$. 
Now let $u\in L^2(\Omega,E_k)$ be arbitrary. Since $A_{k+1}A_k=0$, it follows that $(A_ku,0)$ belongs to 
$\mathrm{ker}\,\tilde\scrA_{k+1}$ and thus 
 $$\tilde\scrA_k\begin{pmatrix}u\\0\end{pmatrix}=\pi_{k+1}\begin{pmatrix}A_ku\\0\end{pmatrix}=
     \begin{pmatrix}A_ku\\0\end{pmatrix}.$$
Hence the block-matrix representation of $\tilde\scrA_k$ has the desired form.  
\end{proof}
%%%%%%%%%%%%%%%%%%%%%%%%%%%%%%%%%%%%%%%%%%%%%%%%%%
%%%%%%%%%%%%%%%%%%%%%%%%%%%%%%%%%%%%%%%%%%%%%%%%%%

\bibliographystyle{amsalpha}

%%%%%%%%%%%%%%%%%%%%%%%%%%%%%%%%%%%%%%%%%%%%%%%%%%
%%%%%%%%%%%%%%%%%%%%%%%%%%%%%%%%%%%%%%%%%%%%%%%%%%
\end{document}